\documentclass[11pt,a4paper]{amsart}
\usepackage[T1]{fontenc}
\usepackage[utf8]{inputenc}
\usepackage{lmodern,amssymb}
\usepackage{graphicx}
\usepackage[hidelinks]{hyperref}
\numberwithin{equation}{section}
\newtheorem{theorem}{Theorem}[section]
\newtheorem{lemma}[theorem]{Lemma}
\newtheorem{proposition}[theorem]{Proposition}
\theoremstyle{remark}

\newcommand{\R}{\mathbb R}
\newcommand{\C}{\mathbb C}
\newcommand{\PW}{\mathcal{PW}}
\newcommand{\sgn}{\operatorname{sgn}}
\newcommand{\supp}{\operatorname{supp}}
\newcommand{\norm}[1]{\left\|#1\right\|}
\title[Approximation and Bohr--Favard estimates on the sphere]{On best approximation and estimates of Bohr--Favard type on the sphere for functions with bounded Laplacian}
\author{Alexey Solyanik}
\date{October 10, 2026}
\begin{document}
\begin{abstract}
We determine the best constants in estimates of the $L^\infty$ norm of functions on the sphere with bounded Laplacian whose first spherical harmonic components vanish. In every dimension $d\geq2$, these constants are expressed through weighted $L^1$ extremal polynomials.
On $S^2$ the constant is an explicit positive series, and the best coefficient of $n^{-2}$ is $7\zeta(3)/4$. On $S^3$ the best coefficient of $n^{-2}$ is a positive integral involving the H\"ormander--Bernhardsson extremal function. In higher dimensions the rescaled extremal polynomials converge to weighted entire extremals, which give asymptotic formulas for the Bohr constants on $S^4$ and $S^6$.

The most unexpected and, in our view, interesting observation is that, for every fixed $n\geq2$ on $S^2$ and $S^3$, the constants in best approximation estimates are \emph{strictly smaller} than the corresponding Bohr--Favard constants.
\end{abstract}
\maketitle

\section{The Bohr and Favard theorems}\label{sec:classicalBohrFavard}

Sharp estimates for the uniform norm of periodic functions with missing low frequencies were first obtained by Harald Bohr in 1935 ($r=1$)~\cite{Bohr} and Jean Favard in 1936 ($r\geq2$)~\cite{Favard1936}. For a $2\pi$-periodic function $f$ whose $r$th derivative is bounded in $L^\infty$ by one and for which $\widehat f(k)=0$ for $|k|<n$, the inequality takes the form
\begin{equation}\label{eq:BohrFavard}
 \norm{f}_{L^\infty}\leq\frac{K_r}{n^r}.
\end{equation}
Here $r$ and $n$ are positive integers, and $f^{(r-1)}$ is absolutely continuous. The Favard constants are
\begin{equation}\label{eq:FavardConstants}
 K_r=\frac4\pi\sum_{j\in4\mathbb Z+1}\frac1{j^{r+1}}.
\end{equation}
In particular, $K_1=\pi/2$ and $K_2=\pi^2/8$.

In his subsequent work, Favard~\cite{Favard1937} obtained a sharp estimate for the best approximation of functions with a bounded $r$th derivative by trigonometric polynomials. If $f$ is $2\pi$-periodic, $f^{(r-1)}$ is absolutely continuous, and its $r$th derivative is bounded in $L^\infty$ by one, then
\begin{equation}\label{eq:FavardApproximation}
 E_{n-1}(f)_{L^\infty}\leq\frac{K_r}{n^r}.
\end{equation}
Here $T_{n-1}$ denotes the space of trigonometric polynomials of degree at most $n-1$, and
\[
 E_{n-1}(f)_X=\inf_{t\in T_{n-1}}\norm{f-t}_X.
\]
The assumption that the $r$th derivative is bounded by one entails no loss of generality, by rescaling.

Thus the same constants occur in both estimates (see  DeVore and Lorentz~\cite[Chapter~7, \S\,4]{DeVoreLorentz}).

\emph{Both estimates~\eqref{eq:BohrFavard} and~\eqref{eq:FavardApproximation} are sharp for every fixed $n$.} Their sharpness is witnessed by the Euler spline (see DeVore and Lorentz~\cite[Chapter~7, \S\,4]{DeVoreLorentz}).

It is important to distinguish this from the weaker assertion that $K_r$ is the best constant valid simultaneously for all $n$.

It may seem that estimates~\eqref{eq:BohrFavard} and~\eqref{eq:FavardApproximation} are equivalent. One implication does hold: by a special case of the Hahn--Banach theorem proved by Helly in 1912~\cite{Helly}, estimate~\eqref{eq:FavardApproximation} follows from~\eqref{eq:BohrFavard} (see Section~\ref{sec:favardconstants} for the corresponding duality argument on the sphere).

In the opposite direction, however, estimate~\eqref{eq:BohrFavard} does not follow formally from~\eqref{eq:FavardApproximation}, even though the two estimates appear closely related.

Of course, estimate~\eqref{eq:FavardApproximation} can also be proved directly. For this, we look for an operator $B_r$ which inverts the $r$th derivative on functions of mean zero:
\[
 f=B_r\bigl(f^{(r)}\bigr).
\]
The assumption of zero mean entails no loss of generality: subtracting a constant changes neither the derivative nor the best approximation error. Such an operator is given by convolution with the Bernoulli kernel. With the notation and normalization of~\cite{Solyanik},
\[
 \mathcal B_r(\theta)=\sum_{k\ne0}\frac{e^{ik\theta}}{(ik)^r},
 \qquad f=\mathcal B_r*f^{(r)}.
\]
Here convolution and the $L^1$ norm are taken with respect to the normalized measure $d\theta/(2\pi)$. Subtracting a trigonometric polynomial from the kernel immediately gives
\[
 E_{n-1}(f)_{L^\infty}
 \leq\norm{\mathcal B_r-t_{n-1}}_{L^1}
 \norm{f^{(r)}}_{L^\infty},
\]
where $t_{n-1}$ is any trigonometric polynomial of degree at most $n-1$.

Thus the problem reduces to best $L^1$ approximation of the Bernoulli kernel. Favard~\cite{Favard1936} obtained the exact formula
\begin{equation}\label{eq:BernoulliBestApproximation}
 E_{n-1}(\mathcal B_r)_{L^1}=\frac{K_r}{n^r}.
\end{equation}
See also DeVore and Lorentz~\cite[Chapter~7, \S\,4]{DeVoreLorentz}.

Thus, to obtain Favard-type estimates by this method, one needs the best $L^1$ approximation of the kernel of the corresponding inverse operator (here, the inverse of $r$th differentiation).

As an example, consider the results of~\cite{Solyanik} on approximation of Lipschitz functions on $[-1,1]$ by \emph{algebraic polynomials}. After the substitution $x=\cos\theta$, the problem leads to $L^1$ approximation of the kernels of an inverse of the operator
\[
 D=\frac1{\sin\theta}\,\partial_\theta.
\]
This is an ordinary differential operator with a singular coefficient. Here the functions and the right-hand sides are even and periodic. With the constant of integration chosen to give zero mean, its inverse is the operator $K$ from~\cite[formula~(6.7)]{Solyanik}:
\begin{equation}\label{eq:KryakinInverse}
 (Kh)(\theta)=\sin\theta\,(h*\mathcal K_1)(\theta)-\cos\theta\,(h*\mathcal K_2)(\theta),
\end{equation}
where
\[
 \mathcal K_1(\theta)=\mathcal B_1(\theta)\cos\theta,
 \qquad
 \mathcal K_2(\theta)=\mathcal B_1(\theta)\sin\theta.
\]
In~\cite[Theorem~1]{Solyanik}, the following exact values were obtained for every integer $n\geq2$:
\begin{equation}\label{eq:KryakinBestApproximation}
 E_{n-1}(\mathcal K_1)_{L^1}=\tan\frac{\pi}{2n},
 \qquad
 E_{n-1}(\mathcal K_2)_{L^1}=\sec\frac{\pi}{2n}-1.
\end{equation}
These identities yield Nikol'skii-type estimates for approximation of Lipschitz functions on $[-1,1]$ by \emph{algebraic polynomials}; see~\cite[Theorem~2]{Solyanik}.

To conclude this brief review, we note that the best $L^1$ approximants to the kernels give explicit trigonometric polynomials realizing the corresponding approximation estimates for functions.

\section{Constructive approximation of functions on the sphere}\label{sec:sphericalApproximation}

The approximation of functions on spheres has a long history; see Nikol'skii and Lizorkin~\cite{NikolskiiLizorkin} and Dai and Xu~\cite{DaiXu} for results and further references. Approximation estimates and related methods were developed in the works of Kushnirenko~\cite{Kushnirenko}, Newman and Shapiro~\cite{NewmanShapiro}, Ragozin~\cite{Ragozin}, Bavinck~\cite{Bavinck}, and Pawelke~\cite{Pawelke}.

Let $d\geq2$, and let $S^d\subset\R^{d+1}$ be the unit sphere of dimension $d$, equipped with normalized surface measure $d\sigma_d$. The Laplace--Beltrami operator $\Delta=\Delta_{S^d}=\operatorname{div}_{S^d}\nabla_{S^d}$ can be computed by applying the Euclidean Laplacian to the extension that is constant along rays:
\[
 \Delta_{S^d}f(\omega)=\Delta_{\R^{d+1}}\widetilde f(\omega),
 \qquad \widetilde f(x)=f\!\left(\frac{x}{|x|}\right).
\]
Its eigenfunctions are the spherical harmonics. Those of degree $\ell$ form the space $\mathcal H_\ell$ of restrictions to the sphere of homogeneous harmonic polynomials of degree $\ell$ on $\R^{d+1}$, and
\[
 -\Delta_{S^d}Y=\ell(\ell+d-1)Y,
 \qquad Y\in\mathcal H_\ell,
\]
for each nonnegative integer $\ell$. We shall therefore approximate functions by finite sums of spherical harmonics. For an integer $n\geq1$, let $\mathcal H_{<n}$ denote the space of sums of spherical harmonics of degrees $0,\ldots,n-1$. We write $f\perp\mathcal H_{<n}$ if all these components of $f$ vanish. Thus $n$ is the first allowed degree, and $E_{n-1}(f)_{L^\infty(S^d)}$ denotes the best uniform approximation by spherical harmonics of degrees at most $n-1$.

For background on spherical harmonics and zonal kernels, see Stein and Weiss~\cite[Chapter~IV, \S\,2]{SteinWeiss}. An excellent survey of spherical functions and constructive approximation on the sphere is given by Freeden, Michel, and Simons~\cite{FreedenMichelSimons}.

For a smooth function $f$ whose Laplacian is bounded in $L^\infty$ by one, the classical estimate for best uniform approximation takes the form
\begin{equation}\label{eq:sphericalJacksonOrder}
 E_{n-1}(f)_{L^\infty(S^d)}\leq\frac{C_d}{n^2}.
\end{equation}
Pawelke~\cite[Theorem~3.3]{Pawelke} proves this estimate in arbitrary dimension and, on p.~477, attributes the case of $S^2$ to Kushnirenko's work of 1959~\cite{Kushnirenko}.

For the same class of functions, under the additional condition $f\perp\mathcal H_{<n}$, Kamzolov~\cite{Kamzolov} established the corresponding Bohr--Favard estimate
\begin{equation}\label{eq:sphericalBohrOrder}
 \norm{f}_{L^\infty(S^d)}\leq\frac{C'_d}{n^2}.
\end{equation}
His result applies more generally to compact symmetric spaces of rank one. The constants in these two estimates depend only on $d$, and the estimates hold for every positive integer $n$.

In the present work we seek the sharp constants in both estimates, restricting ourselves to $r=2$, that is, to the Laplace--Beltrami operator. To state the two problems precisely, define, for each positive integer $n$,
\begin{equation}\label{eq:sphericalBohrFavard}
 H(n,S^d)=\sup\left\{\norm{f}_\infty:\; f\perp\mathcal H_{<n},\; \norm{\Delta f}_\infty\leq1\right\},
\end{equation}
\begin{equation}\label{eq:sphericalFavardApproximation}
 K(n,S^d)=\sup\left\{E_{n-1}(f)_{L^\infty(S^d)}:\; \norm{\Delta f}_\infty\leq1\right\}.
\end{equation}
The suprema are taken over smooth complex-valued functions on $S^d$. The same constants are obtained for functions with bounded weak Laplacian, by heat regularization. Here $n$ specifies the harmonic cutoff, and the dimension $d\geq2$ is arbitrary. The letter $H$ refers to Harald Bohr, and $K$ to Jean Favard. The best coefficients in the corresponding estimates of order $n^{-2}$, valid simultaneously for all positive integers $n$, are $H(S^d)=\sup_{n\geq1}n^2H(n,S^d)$ and $K(S^d)=\sup_{n\geq1}n^2K(n,S^d)$.

With some latitude in terminology, we shall call the determination of $H(n,S^d)$ and $H(S^d)$ the \emph{Bohr problem}, and the determination of $K(n,S^d)$ and $K(S^d)$ the \emph{Favard problem}. This terminology distinguishes the two types of estimates, although Favard also proved the classical norm estimate for $r\geq2$.

We begin with the Green representation, which explains why approximation of an inverse kernel enters both questions. On $S^2$ the best kernel approximant comes from trigonometric interpolation at equally spaced angles and gives an explicit formula for $H(n,S^2)$. Its best uniform coefficient is $H(S^2)=7\zeta(3)/4$. On $S^3$ the interpolation nodes are instead determined by a weighted $L^1$ extremal problem; rescaling that problem leads to the H\"ormander--Bernhardsson function and the constant $H(S^3)$. An exact transfer argument then establishes the bound for every $n$, not only asymptotically.

We also compare best approximation with linear approximation. On $S^2$ the kernel method is optimal among all linear methods, yet $K(n,S^2)<H(n,S^2)$ for each $n\geq2$. Section~\ref{sec:favardconstants} gives the exact dual formulation and extends the strict inequality to $S^3$. Thus the coincidence of the two sharp constants in the classical periodic problem does not persist on these spheres. The exact values of $K(n,S^d)$ for $n\geq2$, and a possible separation of the constants uniform in $n$, remain open here.

Section~\ref{sec:BohrAllDimensions} extends the kernel interpolation argument and the exact formula for the Bohr constant at each $n$ to arbitrary dimension. Section~\ref{sec:weightedEntireLimit} identifies the limiting polynomial problem with a weighted entire-function problem, of which the H\"ormander--Bernhardsson problem is the case $d=3$. The pole structure of the even-dimensional Green kernels then yields the limits of $n^2H(n,S^4)$ and $n^2H(n,S^6)$ in Section~\ref{sec:evenBohrLimits}. Section~\ref{sec:evenBohrUniform} shows that these limits are also the best coefficients valid for every $n$.

\section{Green representation for the Bohr and Favard problems}\label{sec:greenrepresentation}

As in the periodic argument, the first step is to recover a function from its image under the differential operator. We work with functions of mean zero. This is already required by the missing-harmonic condition in the Bohr problem; in the approximation problem, subtracting the mean changes neither the Laplacian nor the error of best approximation.

The inverse of $-\Delta$ on these functions is rotationally invariant and is represented by a real zonal Green kernel $G_d$. Here zonal means that the kernel depends only on the scalar product of its two arguments. With $g=-\Delta f$, the representation is
\begin{equation}\label{eq:GreenRepresentation}
 f(x)=\int_{S^d}G_d(x\cdot y)g(y)\,d\sigma_d(y).
\end{equation}
We choose the kernel to have mean zero. Convolution with it annihilates constants and multiplies a spherical harmonic of positive degree $\ell$ by $1/(\ell(\ell+d-1))$, the reciprocal eigenvalue of $-\Delta$. These multipliers give~\eqref{eq:GreenRepresentation} by the spherical harmonic expansion. We shall verify the explicit kernels on $S^2$ and $S^3$ below, and give a representation valid in every dimension in Section~\ref{sec:BohrAllDimensions}.

Let $p$ be an algebraic polynomial of degree at most $n-1$. Replacing $G_d$ by $p$ in~\eqref{eq:GreenRepresentation} gives a spherical polynomial $t_{n-1}$ in $\mathcal H_{<n}$, since $p(x\cdot y)$ has degree at most $n-1$ in $x$. Subtracting the two representations and using rotational invariance yields, for any fixed pole $e$,
\begin{equation}\label{eq:kernelApproximationBound}
 \norm{f-t_{n-1}}_\infty
 \leq\norm{g}_\infty\int_{S^d}|G_d(e\cdot y)-p(e\cdot y)|\,d\sigma_d(y).
\end{equation}
This bounds the best approximation error of $f$. If $f\perp\mathcal H_{<n}$, then $g$ has the same missing harmonics, and $t_{n-1}=0$ because $p(x\cdot y)$ also has degree at most $n-1$ in $y$. The very same integral therefore bounds the norm of $f$ in the Bohr problem.

This explains the next task: find the best $L^1$ approximation of the Green kernel. To prove sharpness in the Bohr problem, we shall use the sign of the kernel's approximation error as $g$, and verify that it has the required missing harmonics. Sharpness for best approximation of functions is a separate question, to which we return beginning with Section~\ref{sec:favardS2}.

\section{The Bohr constant in arbitrary dimension}\label{sec:BohrAllDimensions}

We first determine the sharp Bohr constant at each fixed $n$ in every dimension. The Green representation reduces the upper bound to polynomial approximation of a zonal kernel. To prove sharpness, we choose the interpolation nodes so that the sign of the error has precisely the required vanishing moments. The resulting sign function will be the negative Laplacian of a Bohr extremal.

Let $d\geq2$. For a fixed pole $e\in S^d$, normalized surface measure induces, in the variable $t=e\cdot\omega$, the probability measure
\begin{equation}\label{eq:generalZonalMeasure}
 d\mu_d(t)=c_d(1-t^2)^{(d-2)/2}\,dt,
 \qquad c_d=\frac{\Gamma((d+1)/2)}{\sqrt\pi\,\Gamma(d/2)}.
\end{equation}
To obtain a representation of $G_d$ valid in every dimension, use the Poisson kernel of the unit ball, in this normalization:
\[
 \mathcal P_{\rho,d}(t)=\frac{1-\rho^2}{(1-2\rho t+\rho^2)^{(d+1)/2}},
 \qquad 0<\rho<1.
\]
Convolution with this kernel preserves constants and multiplies a spherical harmonic of degree $\ell$ by $\rho^\ell$; this is the Poisson extension of its homogeneous harmonic polynomial; see~\cite{SteinWeiss}. Consequently the mean-zero Green kernel has the representation
\begin{equation}\label{eq:generalGreenPoisson}
 G_d(t)=\frac1{d-1}\int_0^1\frac{1-\rho^{d-1}}{\rho}
 \bigl(\mathcal P_{\rho,d}(t)-1\bigr)\,d\rho.
\end{equation}
Indeed, the integral converges in $L^1(\mu_d)$: near zero, $\mathcal P_{\rho,d}-1=O(\rho)$ uniformly in $t$, while its $L^1(\mu_d)$ norm is at most two for all $\rho$. Its mean is zero, and its multiplier on a harmonic of positive degree $\ell$ is
\[
 \frac1{d-1}\int_0^1(1-\rho^{d-1})\rho^{\ell-1}\,d\rho
 =\frac1{\ell(\ell+d-1)}.
\]
Thus~\eqref{eq:generalGreenPoisson} gives precisely the inverse of $-\Delta_{S^d}$ on functions of mean zero. Differentiation under the integral is justified on every compact subinterval of $(-1,1)$. Each positive-order derivative of $\mathcal P_{\rho,d}$ with respect to $t$ is strictly positive, so
\begin{equation}\label{eq:generalGreenDerivatives}
 G_d^{(k)}(t)>0,\qquad -1<t<1,
\end{equation}
for every positive integer $k$.

\begin{lemma}\label{lem:monic}
Let $w$ be an integrable weight that is positive almost everywhere on
$(-1,1)$. There is a unique real monic polynomial $q_n$ of degree $n$
minimizing $\int_{-1}^1|q(t)|w(t)\,dt$. Its zeros are simple and lie
in $(-1,1)$, and
\begin{equation}\label{eq:orthog}
 \int_{-1}^1p(t)\sgn q_n(t)w(t)\,dt=0,
 \qquad \deg p<n.
\end{equation}
If $w$ is even, then $q_n(-t)=(-1)^nq_n(t)$.
\end{lemma}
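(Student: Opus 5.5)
The plan is to treat this as a best $L^1(w)$ approximation problem: minimizing $\int|q|w$ over monic $q$ of degree $n$ is the same as best approximation of $t^n$ by the subspace $\mathcal P_{n-1}$ of polynomials of degree less than $n$ in $L^1(w\,dt)$.

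\emph{Existence.} I would take a minimizing sequence $q=t^n-p$. The functional $p\mapsto\int|t^n-p|w$ is continuous and convex on the finite-dimensional space $\mathcal P_{n-1}$. Because $w>0$ a.e., $\int|p|w$ is a norm on $\mathcal P_{n-1}$, so the functional is coercive. A minimizer therefore exists.

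\emph{Characterization.} For any minimizer $q_n$, I would first show that its zero set has measure zero. This holds since $q_n\not\equiv0$ is a nonzero polynomial. Then the one-sided derivative of $\varepsilon\mapsto\int|q_n-\varepsilon p|w$ at $\varepsilon=0$ equals $-\int p\,\sgn q_n\,w$, with no boundary term from the zero set. Minimality in both directions $\pm p$ forces \eqref{eq:orthog}.

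\emph{Zeros.} Suppose $q_n$ has fewer than $n$ sign changes in $(-1,1)$, at points $t_1,\dots,t_m$ with $m<n$. Put $p=\prod(t-t_j)$, which has degree $m<n$. Then $p\,\sgn q_n$ has constant sign a.e. and is nonzero on a set of positive measure. Since $w>0$ a.e., this gives $\int p\,\sgn q_n\,w\ne0$, contradicting \eqref{eq:orthog}. Hence $q_n$ has at least $n$ sign changes in $(-1,1)$. As $\deg q_n=n$, these are all its zeros, and each is simple.

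\emph{Uniqueness.} This is the main obstacle, because $L^1$ is not strictly convex. Suppose $q_n$ and $\tilde q_n$ are both minimizers. Their average $r=(q_n+\tilde q_n)/2$ is also monic and minimal. Equality in $\int|r|w\le\frac12\int|q_n|w+\frac12\int|\tilde q_n|w$ forces $q_n\tilde q_n\ge0$ a.e. on $(-1,1)$, since $w>0$ a.e. By the previous step, $r$ is itself a minimizer and so has $n$ simple zeros in $(-1,1)$. At each zero $t_0$ of $r$ we have $q_n(t_0)=-\tilde q_n(t_0)$, and since the two polynomials have the same sign near $t_0$, both must vanish there. So $q_n$ and $\tilde q_n$ share the $n$ zeros of $r$, and being monic of degree $n$ they coincide.

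\emph{Symmetry.} If $w$ is even, then $(-1)^nq_n(-t)$ is monic of degree $n$ and has the same weighted $L^1$ norm as $q_n$. By uniqueness it equals $q_n$, which gives $q_n(-t)=(-1)^nq_n(t)$.
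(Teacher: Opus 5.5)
Your proof is correct and follows the paper's argument step by step: coercivity for existence, the first variation (justified because the zero set is null) for the orthogonality relation, the sign-change product to force $n$ simple interior zeros, the equality case of the triangle inequality for uniqueness, and uniqueness again for the parity statement. The only difference is cosmetic: you locate the common zeros through the midpoint $r=(q_n+\tilde q_n)/2$, whereas the paper observes directly that the two minimizers have the same sign a.e.\ and hence the same sign-change points.
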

\begin{proof}
Existence follows from coercivity on the finite-dimensional affine
space of monic polynomials. Differentiating the norm along $q_n+\varepsilon p$
gives \eqref{eq:orthog}; the zero set of $q_n$ has measure zero, so
dominated convergence justifies the derivative. If $q_n$ had fewer
than $n$ sign changes in $(-1,1)$, the product of its sign-change
factors, with a suitable overall sign, would be a polynomial of degree
less than $n$ whose sign agrees with that of $q_n$. This contradicts
\eqref{eq:orthog}. Hence all $n$ zeros are simple and interior.

For two minimizers, equality in the convexity inequality for the
$L^1$ norm implies agreement of their signs almost everywhere. Their
simple sign-change zeros must therefore coincide, and the monic
normalization makes the polynomials equal. Applying uniqueness to
$(-1)^nq_n(-t)$ proves the last assertion.
\end{proof}

For a positive integer $n$, let $q_{n,d}$ be the monic polynomial of degree $n$ minimizing $\int_{-1}^1|q(t)|\,d\mu_d(t)$. Lemma~\ref{lem:monic} applies to this weight: the minimizer is unique, all its zeros are simple and interior, and
\begin{equation}\label{eq:generalSignMoments}
 \int_{-1}^1p(t)\sgn q_{n,d}(t)\,d\mu_d(t)=0
\end{equation}
for every polynomial $p$ of degree less than $n$. These moment conditions determine the nodes required for the Green interpolation problem.

\begin{theorem}\label{thm:BohrAllDimensions}
Let $d\geq2$ and $n\geq1$ be integers, and let $p_{n-1,d}$ interpolate $G_d$ at the zeros of $q_{n,d}$. Then $p_{n-1,d}$ is a best $L^1(\mu_d)$ approximant to $G_d$ among polynomials of degree at most $n-1$, and
\begin{equation}\label{eq:BohrAllDimensions}
 \begin{aligned}
 H(n,S^d)
 &=\int_{-1}^1|G_d(t)-p_{n-1,d}(t)|\,d\mu_d(t)\\
 &=\int_{-1}^1G_d(t)\sgn q_{n,d}(t)\,d\mu_d(t).
 \end{aligned}
\end{equation}
In particular, this is the sharp constant in
\[
 \norm{f}_\infty\leq H(n,S^d)\norm{\Delta_{S^d}f}_\infty
\]
for smooth functions $f\perp\mathcal H_{<n}$, at each fixed $n$.

For a fixed pole $e$, let $f_{n,d}$ be the mean-zero weak solution of
\begin{equation}\label{eq:generalBohrExtremal}
 -\Delta_{S^d}f_{n,d}(\omega)=\sgn q_{n,d}(e\cdot\omega).
\end{equation}
It has no harmonics of degrees less than $n$, and
\begin{equation}\label{eq:generalBohrExtremalValue}
 f_{n,d}(e)=\norm{f_{n,d}}_\infty=H(n,S^d).
\end{equation}
Its smooth regularizations prove sharpness also in the smooth class defining the constant.
\end{theorem}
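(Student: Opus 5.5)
The argument combines the classical sign/interpolation criterion for best $L^1$ approximation with the Green representation of Section~\ref{sec:greenrepresentation}.

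\emph{Step 1: the interpolation error has the sign of $q_{n,d}$.} Let $t_1<\dots<t_n$ be the zeros of $q_{n,d}$, which by Lemma~\ref{lem:monic} are simple and lie in $(-1,1)$. Since $G_d$ is smooth on $(-1,1)$, the Hermite remainder formula gives
\[
 G_d(t)-p_{n-1,d}(t)=\frac{G_d^{(n)}(\xi_t)}{n!}\,q_{n,d}(t)
\]
for some $\xi_t\in(-1,1)$. By \eqref{eq:generalGreenDerivatives}, $G_d^{(n)}>0$ on $(-1,1)$. Hence
\[
 \sgn\bigl(G_d-p_{n-1,d}\bigr)=\sgn q_{n,d}
\]
almost everywhere on $(-1,1)$. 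The singularity of $G_d$ at $t=1$ (and at $t=-1$ only in the sense of endpoint behaviour) does not matter, because the remainder formula is applied pointwise on the open interval. We only need $G_d\in L^1(\mu_d)$, which holds by the convergence remark after \eqref{eq:generalGreenPoisson}.

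\emph{Step 2: best approximation and the value of the error.} For any polynomial $p$ of degree less than $n$,
\[
 \int|G_d-p|\,d\mu_d\ \ge\ \int (G_d-p)\sgn q_{n,d}\,d\mu_d=\int G_d\sgn q_{n,d}\,d\mu_d ,
\]
where the equality uses the moment conditions \eqref{eq:generalSignMoments}. Equality holds for $p=p_{n-1,d}$ by Step 1. This proves that $p_{n-1,d}$ is a best approximant and gives the second equality in \eqref{eq:BohrAllDimensions}. Inserting $p_{n-1,d}$ into \eqref{eq:kernelApproximationBound} yields $H(n,S^d)\le\int G_d\sgn q_{n,d}\,d\mu_d$.

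\emph{Step 3: the extremal function.} Set $g(\omega)=\sgn q_{n,d}(e\cdot\omega)$, a bounded zonal function with $|g|=1$ almost everywhere. By the Funk--Hecke formula, its projection onto $\mathcal H_\ell$ is a multiple of the Gegenbauer polynomial $C^{(d-1)/2}_\ell(e\cdot\omega)$, with coefficient proportional to $\int C^{(d-1)/2}_\ell\sgn q_{n,d}\,d\mu_d$. For $\ell<n$ this coefficient vanishes by \eqref{eq:generalSignMoments}. In particular $g$ has mean zero, so $f_{n,d}=G_d*g$ is the mean-zero weak solution of \eqref{eq:generalBohrExtremal}, and it lies in the span of harmonics of degree $\ge n$.

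Now we bound $f_{n,d}$. At the pole, $f_{n,d}(e)=\int G_d\sgn q_{n,d}\,d\mu_d$. At an arbitrary point $x$, the argument of Section~\ref{sec:greenrepresentation} with $t_{n-1}=0$ gives $|f_{n,d}(x)|\le\int|G_d-p_{n-1,d}|\,d\mu_d$, and by Step 2 this equals the same number. Hence \eqref{eq:generalBohrExtremalValue} holds. Continuity of $f_{n,d}$ follows because $G_d\in L^1$ and $g\in L^\infty$, and the convolution of an $L^1$ zonal kernel with an $L^\infty$ function on the sphere is continuous.

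\emph{Step 4: passing to the smooth class.} Here I expect the main technical point. Apply the heat semigroup and set $f_\varepsilon=e^{\varepsilon\Delta}f_{n,d}$. This function is smooth, it is still orthogonal to $\mathcal H_{<n}$, and it satisfies $\|\Delta f_\varepsilon\|_\infty=\|e^{\varepsilon\Delta}g\|_\infty\le1$, since the heat kernel is a positive probability kernel. Moreover $f_\varepsilon(e)\to f_{n,d}(e)$ as $\varepsilon\to0$ by continuity of $f_{n,d}$. Therefore $H(n,S^d)\ge f_{n,d}(e)$, which combined with Step 2 gives the equality in \eqref{eq:BohrAllDimensions} and the sharpness of the inequality in the smooth class.
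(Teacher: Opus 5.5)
Your proposal is correct and follows essentially the same route as the paper. The interpolation remainder with $G_d^{(n)}>0$ fixes the sign of the error, the moment conditions \eqref{eq:generalSignMoments} give the duality lower bound, the Green representation gives the upper bound, and the zonal source $\sgn q_{n,d}(e\cdot\omega)$ with heat regularization gives sharpness. The only differences are cosmetic: you use Funk--Hecke where the paper averages over rotations fixing $e$, and you regularize $f_{n,d}$ directly and use its continuity at $e$, where the paper regularizes the source and applies dominated convergence.
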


\begin{proof}
Write $t_1,\ldots,t_n$ for the zeros of $q_{n,d}$. At a point $t\in(-1,1)$ distinct from the nodes, the interpolation remainder is
\[
 G_d(t)-p_{n-1,d}(t)
 =\frac{G_d^{(n)}(\xi)}{n!}\,q_{n,d}(t),
\]
where $\xi$ lies between the smallest and largest of $t,t_1,\ldots,t_n$. By~\eqref{eq:generalGreenDerivatives}, the error therefore has the sign of $q_{n,d}$. Put $s_{n,d}=\sgn q_{n,d}$. For any competing polynomial $p$ of degree less than $n$, the moment conditions give
\[
 \int_{-1}^1|G_d-p|\,d\mu_d
 \geq\int_{-1}^1(G_d-p)s_{n,d}\,d\mu_d
 =\int_{-1}^1G_ds_{n,d}\,d\mu_d,
\]
first for real $p$. For a complex $p$, replacing it by its real part cannot increase its distance from the real kernel. The interpolant attains this lower bound because its error has sign $s_{n,d}$. This proves best approximation and the equality of the two integrals in~\eqref{eq:BohrAllDimensions}.

Now let $f\perp\mathcal H_{<n}$ and put $g=-\Delta f$. The same low harmonics of $g$ vanish. Subtracting the interpolating polynomial from the Green representation at any pole $e$ gives
\[
 f(e)=\int_{S^d}\bigl(G_d(e\cdot\omega)-p_{n-1,d}(e\cdot\omega)\bigr)
 g(\omega)\,d\sigma_d(\omega).
\]
The first integral in~\eqref{eq:BohrAllDimensions} is therefore an upper bound for the Bohr constant.

For the reverse bound, take $g_0(\omega)=s_{n,d}(e\cdot\omega)$. This function is bounded by one and is orthogonal to $\mathcal H_{<n}$. To see the orthogonality, average a spherical harmonic of degree less than $n$ over the rotations fixing $e$. The average is a polynomial of that degree in $e\cdot\omega$, and~\eqref{eq:generalSignMoments} applies. The mean-zero Green potential of $g_0$ is the function $f_{n,d}$ in~\eqref{eq:generalBohrExtremal}; convolution preserves the missing harmonics. This potential is continuous, since rotations of an integrable zonal kernel are continuous in its integral norm. At $e$ it takes exactly the value of the second integral in~\eqref{eq:BohrAllDimensions}. The same kernel bound applies at every pole to this bounded source, proving~\eqref{eq:generalBohrExtremalValue}.

Finally, apply the spherical heat semigroup to $g_0$. It preserves the missing harmonics and the bound by one, and produces smooth functions $g_\varepsilon$. Their mean-zero Green potentials $f_\varepsilon$ are smooth, have the same missing harmonics, and satisfy $-\Delta f_\varepsilon=g_\varepsilon$. The smoothed sign functions converge to $g_0$ away from its finitely many latitude spheres of discontinuity. Since $G_d\in L^1(\mu_d)$, dominated convergence gives
\[
 f_\varepsilon(e)\longrightarrow
 \int_{-1}^1G_d(t)s_{n,d}(t)\,d\mu_d(t).
\]
This proves the matching lower bound within the smooth class used to define $H(n,S^d)$.
\end{proof}

The extremal can therefore be chosen real, although the estimate holds for complex-valued functions. In every dimension, symmetry gives $q_{1,d}(t)=t$. Another simple example is
\[
 q_{3,d}(t)=t\bigl(t^2-1+2^{-2/d}\bigr).
\]
Indeed, parity leaves only the moment against $t$ to check. If $b$ is the positive zero, that condition is $(1-b^2)^{d/2}=1/2$.

Thus the exact fixed-$n$ problem is solved by interpolation at the zeros of $q_{n,d}$. Before computing the constants on $S^2$ and $S^3$, we record a useful closed form for the Green kernel in every even dimension.

\section{The Green kernel in even dimensions}\label{sec:evenGreenKernels}

In even dimensions the Green kernel is a finite sum of poles and a logarithm. This structure concerns the dimension $d$; the harmonic cutoff $n$ remains arbitrary. It will later make the limiting Bohr constants accessible in dimensions four and six.

\begin{proposition}\label{prop:evenGreenStructure}
For an integer $s\geq1$, the mean-zero Green kernel on $S^{2s}$ is
\begin{equation}\label{eq:evenGreenStructure}
 G_{2s}(t)=\sum_{j=1}^{s-1}\frac{b_j}{(1-t)^j}
 -\frac1{2s-1}\log\frac{1-t}{2}+C_s.
\end{equation}
The constant $C_s$ makes the mean zero. When $s\geq2$, the coefficients of the pole terms are positive and satisfy
\begin{equation}\label{eq:evenGreenRecurrence}
 b_1=\frac1{2s-1},\qquad
 b_j=\frac{2(j-1)(s-j)}{j(2s-j-1)}b_{j-1},\qquad 2\leq j\leq s-1.
\end{equation}
For $s=1$ the sum is empty and $C_1=-1$.
\end{proposition}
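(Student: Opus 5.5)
The plan is to derive an ordinary differential equation for $G_{2s}$ in the zonal variable, integrate it once in closed form, and read off the partial-fraction structure.

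\textbf{Step 1: the zonal equation.} For a zonal function $F(e\cdot\omega)$, smooth on $(-1,1)$, the Laplace--Beltrami operator acts as
\[
 \Delta_{S^d}F(e\cdot\omega)=(1-t^2)F''(t)-d\,tF'(t),\qquad t=e\cdot\omega .
\]
By construction in~\eqref{eq:generalGreenPoisson}, $-\Delta G_d=\delta_e-1$ in the sense of distributions, where $\delta_e$ is the point mass for normalized measure. I would check this directly on spherical harmonics via the multipliers computed there. Hence on $(-1,1)$ we have $(1-t^2)G_d''-d\,tG_d'=1$. With the weight $c_d(1-t^2)^{(d-2)/2}$ this is the divergence form
\[
 \bigl(c_d(1-t^2)^{d/2}G_d'\bigr)'=c_d(1-t^2)^{(d-2)/2}.
\]

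\textbf{Step 2: fixing the constant of integration at $t=-1$.} I need $(1-t^2)^{d/2}G_d'(t)\to0$ as $t\to-1$. This holds because $G_d$ is smooth near the antipode, which follows from~\eqref{eq:generalGreenPoisson}: the Poisson kernels are uniformly smooth in a neighbourhood of $t=-1$. Integrating from $-1$ then gives
\[
 (1-t^2)^{d/2}G_d'(t)=\int_{-1}^t(1-u^2)^{(d-2)/2}\,du .
\]
So $c_d(1-t^2)^{d/2}G_d'(t)$ is exactly $\mu_d([-1,t])$, the flux through the latitude sphere.

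\textbf{Step 3: partial fractions for $d=2s$.} Now the integrand $(1-u^2)^{s-1}$ is a polynomial. Its primitive $\Phi(t)=\int_{-1}^t(1-u^2)^{s-1}\,du$ has degree $2s-1$ and vanishes to order $s$ at $t=-1$. Therefore
\[
 G_{2s}'(t)=\frac{\Phi(t)}{(1-t)^s(1+t)^s}=\frac{Q(t)}{(1-t)^s},\qquad \deg Q=s-1 .
\]
It follows that $G_{2s}'=\sum_{k=1}^{s}a_k(1-t)^{-k}$, with no polynomial part. For the coefficient $a_1$ I compare behaviour as $t\to\infty$: the leading term of $\Phi(t)$ is $(-1)^{s-1}t^{2s-1}/(2s-1)$, and $(1-t^2)^s$ has leading term $(-1)^st^{2s}$, so $G_{2s}'(t)\sim -1/((2s-1)t)$. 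Hence $a_1=1/(2s-1)$.

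Integrating term by term gives~\eqref{eq:evenGreenStructure} with $b_j=a_{j+1}/j$, together with the logarithm $-\frac1{2s-1}\log\frac{1-t}{2}$ and a constant $C_s$ fixed by the mean-zero condition. For $s=1$ there are no poles. The mean-zero condition then reads $\frac12\int_{-1}^1\log\frac{1-t}{2}\,dt=\int_0^1\log u\,du=-1$, so $C_1=-1$.

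\textbf{Step 4: the recurrence.} Substitute $x=1-t$, so that $d/dt=-d/dx$. The ODE becomes
\[
 (2x-x^2)G''+2s(1-x)G'_x=1,
\]
where $G'_x=-G'$. Insert $G'=\sum_k a_kx^{-k}$ and match coefficients of each power of $x$. This should yield a two-term relation linking $a_{j+1}$ and $a_j$. Rewritten in terms of $b_j$, I expect it to give exactly
\[
 b_j=\frac{2(j-1)(s-j)}{j(2s-j-1)}\,b_{j-1},
\]
starting from $b_1=a_2$. I would fix $b_1=1/(2s-1)$ from the first matching equation that involves $a_1$ and $a_2$. For $2\le j\le s-1$ the factor in the recurrence is positive, so positivity of all $b_j$ follows by induction.

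\textbf{Main obstacle.} The main obstacle is this coefficient bookkeeping. One must verify that the lowest-order equations produce $b_1=1/(2s-1)$ rather than merely a consistency relation, and that the top coefficient $a_s$ is compatible with the recurrence terminating at $j=s-1$. As a fallback, I would compute $b_1$ directly from the partial-fraction expansion of $\Phi(t)/(1+t)^s$ at $t=1$.
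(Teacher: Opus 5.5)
Your proof is correct, and it runs in the opposite direction from the paper's. The paper starts from the ansatz \eqref{eq:evenGreenStructure} and computes the zonal Laplacian of $(1-t)^{-j}$ and of $-\log\frac{1-t}{2}$. It reads off $b_1$ and the recurrence from pole cancellation. Only then does it use the divergence form (zero flux at $t=-1$, unit flux at $t=1$) to show that the candidate solves $-\Delta G_{2s}=\delta_e-1$, and it identifies the candidate with \eqref{eq:generalGreenPoisson} through the mean.

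You instead start from \eqref{eq:generalGreenPoisson}. You use its smoothness at the antipode to integrate the divergence form once, which gives the closed form $c_{2s}(1-t^2)^sG_{2s}'(t)=\mu_{2s}([-1,t])$. The structure of the kernel then follows from a degree count and the order-$s$ vanishing of $\Phi$ at $-1$, with no ansatz and no separate uniqueness argument.

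The coefficient bookkeeping you flagged as the main obstacle does close. With $x=1-t$ and $G'=\sum_k a_kx^{-k}$, the left side of your equation is $\sum_k 2(k-s)a_kx^{-k}+\sum_k(2s-k)a_kx^{1-k}$. The constant term gives $(2s-1)a_1=1$. The coefficient of $x^{-m}$ gives $(2s-m-1)a_{m+1}=2(s-m)a_m$.
\begin{itemize}
\item For $m=1$ this is $a_2=a_1$, hence $b_1=1/(2s-1)$.
\item For $m=s$ it reads $0=0$, so termination at $a_s$ is automatic.
\item The relation $b_j=a_{j+1}/j$ converts it exactly into \eqref{eq:evenGreenRecurrence}.
\end{itemize}
This is the paper's pole-cancellation computation done one derivative lower, where the logarithm has already dropped out.

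What each route buys: yours explains why only poles of order at most $s-1$ and a single logarithm can occur. It also gives a formula for $G_d'$ valid in every dimension. The paper's route is a self-contained verification that does not rely on the regularity of the Poisson-integral representation at $t=-1$, at the price of knowing the form in advance.
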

\begin{proof}
Away from the pole, the mean-zero Green kernel satisfies
\[
 (1-t^2)G_{2s}''(t)-2stG_{2s}'(t)=1.
\]
Substituting the right-hand side of~\eqref{eq:evenGreenStructure} into this equation gives~\eqref{eq:evenGreenRecurrence}. More explicitly, the zonal Laplacian sends $(1-t)^{-j}$ to
\[
 2j(j+1-s)(1-t)^{-j-1}+j(2s-j-1)(1-t)^{-j},
\]
and sends $-\log((1-t)/2)$ to $(2s-1)-(2s-2)/(1-t)$. Cancellation of the first pole gives $b_1$, and cancellation of each subsequent pole gives the recurrence. The highest possible pole cancels because its coefficient contains $j+1-s$ with $j=s-1$. When $s=1$ the logarithm alone satisfies the equation.

The resulting function is regular at $t=-1$ and integrable against $\mu_{2s}$. Writing the differential equation in divergence form gives
\[
 \frac{d}{dt}\left[c_{2s}(1-t^2)^sG_{2s}'(t)\right]
 =c_{2s}(1-t^2)^{s-1}.
\]
The flux vanishes at $t=-1$ and tends to one at $t=1$, since the measure on the right has total mass one. Thus the distributional equation is $-\Delta_{S^{2s}}G_{2s}=\delta_e-1$. Fixing the mean identifies the function with the Green kernel in~\eqref{eq:generalGreenPoisson}. Positivity of the coefficients follows directly from the recurrence.
\end{proof}

Closed forms for the compact-sphere Green problem, with different expressions in even and odd dimensions, are also given by Chapling~\cite{Chapling}. Here the measure has total mass one and the kernel has mean zero.

For the two higher even dimensions used below, the formulas are
\begin{equation}\label{eq:S4Green}
 d\mu_4(t)=\frac34(1-t^2)\,dt,\qquad
 G_4(t)=\frac13\left(\frac1{1-t}-\log\frac{1-t}{2}\right)-\frac79,
\end{equation}
and
\begin{equation}\label{eq:S6Green}
 \begin{aligned}
 d\mu_6(t)&=\frac{15}{16}(1-t^2)^2\,dt,\\
 G_6(t)&=\frac1{15(1-t)^2}+\frac1{5(1-t)}
 -\frac15\log\frac{1-t}{2}-\frac{43}{75}.
 \end{aligned}
\end{equation}
The nonconstant terms have means $7/9$ and $43/75$, respectively. As a first application of the general Bohr formula, $q_{1,4}(t)=t$ gives
\[
 H(1,S^4)=\frac16+\frac13\log2.
\]
We now turn to the complete calculation on $S^2$ and then to $S^3$.

\section{Best \texorpdfstring{$L^1$}{L1} approximation of the zonal kernel on \texorpdfstring{$S^2$}{S2}}\label{sec:zonalapproximation}

On $S^2$, a zonal kernel depends only on $t=x\cdot y$, where $x,y\in S^2$. Normalized surface measure induces the measure $dt/2$ on $[-1,1]$. We retain this normalization and consider the kernel
\begin{equation}\label{eq:G2}
 G_2(t)=-1-\log\frac{1-t}{2}.
\end{equation}
Fix a pole $e\in S^2$ and put $t=e\cdot\omega=\cos\theta$. Normalized surface measure satisfies
\[
 \int_{S^2}a(e\cdot\omega)\,d\sigma_2(\omega)=\frac12\int_{-1}^1a(t)\,dt.
\]
For a function depending only on $t$, the spherical Laplacian is $\bigl((1-t^2)a'(t)\bigr)'$. Let $P_\ell$ be the Legendre polynomial normalized by $P_\ell(1)=1$. Averaging a spherical harmonic $Y\in\mathcal H_\ell$ over rotations fixing $e$ gives $Y(e)P_\ell(t)$: the average is a zonal harmonic of degree $\ell$ and has value $Y(e)$ at the pole. Hence, for every integrable function $a$ on $[-1,1]$,
\begin{equation}\label{eq:S2zonal}
 \int_{S^2}a(e\cdot\omega)Y(\omega)\,d\sigma_2(\omega)=\frac{Y(e)}2\int_{-1}^1a(t)P_\ell(t)\,dt.
\end{equation}

We first verify the mean and the inverse property of $G_2$. Substitution $u=(1-t)/2$ gives $\frac12\int_{-1}^1G_2(t)\,dt=0$. For $\ell\geq1$, the Legendre equation and integration by parts give
\[
 \ell(\ell+1)\int_{-1}^1G_2(t)P_\ell(t)\,dt
 =\int_{-1}^1(1+t)P_\ell'(t)\,dt=2.
\]
Indeed, $G_2'(t)=(1-t)^{-1}$, the boundary term involving $(1-t^2)G_2(t)P_\ell'(t)$ vanishes at both endpoints, and $\int_{-1}^1P_\ell(t)\,dt=0$. It follows from~\eqref{eq:S2zonal} that convolution with $G_2$ multiplies the harmonic of degree $\ell$ by $1/(\ell(\ell+1))$ and annihilates constants. This verifies the Green representation~\eqref{eq:GreenRepresentation} on $S^2$. For a smooth function the harmonic expansion converges uniformly, so the passage from individual harmonics to the integral representation is justified.

We can now address the approximation problem identified in Section~\ref{sec:greenrepresentation}. We shall find the best $L^1(dt/2)$ approximation of $G_2$ by polynomials of degree at most $n-1$, by first solving a trigonometric problem on an equally spaced grid.

Put $t=\cos\theta$. The change of variable introduces the factor $\sin\theta$, which we include in the function to be approximated:
\begin{equation}\label{eq:trigGreen}
 \mathcal G_2(\theta)=G_2(\cos\theta)\sin\theta
 =-\sin\theta-2\sin\theta\log\left|\sin\frac\theta2\right|.
\end{equation}
The value at an integer multiple of $2\pi$ is defined by continuity. Thus $\mathcal G_2$ is a continuous odd $2\pi$-periodic function. For an algebraic polynomial $p$ of degree at most $n-1$, the function $\sin\theta\,p(\cos\theta)$ is a sine polynomial of degree at most $n$, and
\begin{equation}\label{eq:zonaltrigchange}
 \frac12\int_{-1}^1|G_2(t)-p(t)|\,dt
 =\frac14\int_{-\pi}^{\pi}
 |\mathcal G_2(\theta)-\sin\theta\,p(\cos\theta)|\,d\theta.
\end{equation}
We have therefore arrived at an ordinary, unweighted trigonometric $L^1$ approximation problem. The factor $1/4$ records the original spherical normalization and the passage from a half-period to a full period.

The logarithmic Fourier series gives the particularly simple expansion
\begin{equation}\label{eq:trigGreenFourier}
 \mathcal G_2(\theta)=\left(2\log2-\frac32\right)\sin\theta
 +2\sum_{k=2}^{\infty}\frac{\sin(k\theta)}{k^2-1}.
\end{equation}
Indeed, multiply $-\log(2|\sin(\theta/2)|)=\sum_{k\geq1}\cos(k\theta)/k$ by $2\sin\theta$ and collect the sine terms. The series in~\eqref{eq:trigGreenFourier} converges absolutely and uniformly. The first sine term can be reproduced exactly; the approximation problem is governed by the remaining terms.

\begin{theorem}\label{thm:trigGreenApproximation}
For a positive integer $n$, let $t_n$ be the sine polynomial of degree at most $n$ which interpolates $\mathcal G_2$ at
\begin{equation}\label{eq:uniformGreenNodes}
 \theta_j=\frac{j\pi}{n+1},\qquad j=1,\ldots,n.
\end{equation}
Then $t_n$ is a best $L^1(-\pi,\pi)$ approximant among all trigonometric polynomials of degree at most $n$. Its error is
\begin{equation}\label{eq:trigGreenBestError}
 \inf_{t\in T_n}\int_{-\pi}^{\pi}|\mathcal G_2(\theta)-t(\theta)|\,d\theta
 =8\sum_{j=0}^{\infty}
 \frac{1}{(2j+1)\bigl((n+1)^2(2j+1)^2-1\bigr)}.
\end{equation}
Away from the interpolation nodes, the error has the sign of $\sin((n+1)\theta)$ on $(0,\pi)$.
\end{theorem}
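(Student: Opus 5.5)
Let $N=n+1$ and $s(\theta)=\sgn\sin(N\theta)$. The proof combines a sign-pattern argument with the classical duality test for best $L^1$ approximation.

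\textbf{Step 1 (orthogonality).} The function $s$ is odd, $2\pi/N$-periodic, and a square wave. Its Fourier series is
\[
 s(\theta)=\frac4\pi\sum_{j\geq0}\frac{\sin((2j+1)N\theta)}{2j+1},
\]
so it contains only frequencies that are odd multiples of $N$. In particular $s$ is orthogonal on $(-\pi,\pi)$ to every trigonometric polynomial of degree at most $n$.

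\textbf{Step 2 (dual bound).} For any $t\in T_n$, orthogonality gives
\[
 \int_{-\pi}^{\pi}|\mathcal G_2-t|\,d\theta\geq\int_{-\pi}^{\pi}(\mathcal G_2-t)\,s\,d\theta=\int_{-\pi}^{\pi}\mathcal G_2\,s\,d\theta .
\]
Compute the right-hand side by Parseval with~\eqref{eq:trigGreenFourier}. Only the frequencies $k=(2j+1)N\geq2$ contribute; since $N\geq2$, the $\sin\theta$ term drops out. Each such frequency gives
\[
 \pi\cdot\frac{2}{k^2-1}\cdot\frac{4}{\pi(2j+1)},
\]
and summing yields exactly the right-hand side of~\eqref{eq:trigGreenBestError}. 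Equality holds, and $t_n$ is optimal, provided the error $\mathcal G_2-t_n$ has the sign of $s$ almost everywhere.

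\textbf{Step 3 (sign of the error).} This is the main obstacle.

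First, the setup. The error $E=\mathcal G_2-t_n$ is odd and $2\pi$-periodic. It vanishes at $0$, at $\pi$, and at the $n$ nodes $\theta_j$. It therefore suffices to show that $E$ has no other zeros in $(0,\pi)$, that its zeros at the nodes are sign changes, and that the overall sign is right.

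Second, reduce to the interval $[-1,1]$. Writing $t_n(\theta)=\sin\theta\,p(\cos\theta)$ with $\deg p\leq n-1$, we have
\[
 E(\theta)=\sin\theta\,\bigl(G_2(\cos\theta)-p(\cos\theta)\bigr).
\]
On $(0,\pi)$ the factor $\sin\theta$ is positive. So $p$ is the Lagrange interpolant of $G_2$ at the $n$ points $\cos\theta_j$.

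Third, apply the remainder formula used in Theorem~\ref{thm:BohrAllDimensions}:
\[
 G_2(t)-p(t)=\frac{G_2^{(n)}(\xi)}{n!}\prod_j(t-\cos\theta_j).
\]
By~\eqref{eq:generalGreenDerivatives}, $G_2^{(n)}(\xi)=(n-1)!/(1-\xi)^n>0$. So the error has the sign of $\prod_j(\cos\theta-\cos\theta_j)$.

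Fourth, identify this product. The nodes $\cos(j\pi/N)$ are exactly the zeros of the Chebyshev polynomial of the second kind $U_n$, and
\[
 U_n(\cos\theta)=\frac{\sin(N\theta)}{\sin\theta}
\]
has positive leading coefficient. Hence the product has the sign of $\sin(N\theta)$ on $(0,\pi)$. By oddness, $E$ has the sign of $s$ on all of $(-\pi,\pi)$.

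\textbf{Step 4 (conclusion).} Combining Steps 2 and 3, $\int|E|\,d\theta=\int E\,s\,d\theta$, which equals the lower bound. This proves both optimality of $t_n$ and the value~\eqref{eq:trigGreenBestError}.

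Two residual points need care. Interchanging Parseval with the series is fine because the series in~\eqref{eq:trigGreenFourier} converges uniformly and $s\in L^2$. The Lagrange remainder is needed only at interior points $t\in(-1,1)$, where $G_2$ is smooth.
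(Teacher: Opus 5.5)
Your proof is correct. Steps 1, 2 and 4 are the paper's argument: the square wave $\sgn\sin((n+1)\theta)$ annihilates $T_n$, and Parseval against~\eqref{eq:trigGreenFourier} evaluates the lower bound.

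The genuine difference is Step 3, the sign of the error.

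\textbf{The paper's route.} The paper stays in the trigonometric setting. It writes each coefficient as $2/(k^2-1)=\int_0^1(1-\rho^2)\rho^{k-2}\,d\rho$. It then computes the sine interpolant of each geometric kernel $F_\rho$ exactly by aliasing on the equispaced grid. Integrating in $\rho$ gives the closed formula
\[
\mathcal G_2-t_n=\sin((n+1)\theta)\int_0^1\frac{\rho^{n-1}(1-\rho^2)^2\,d\rho}{(1-\rho^{2n+2})(1-2\rho\cos\theta+\rho^2)}.
\]

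\textbf{Your route.} You pass to $t=\cos\theta$ and use the Cauchy remainder for Lagrange interpolation with $G_2^{(n)}>0$. The sign then follows from identifying the nodes as the zeros of $U_n$. This is exactly the mechanism the paper uses later for Theorem~\ref{thm:BohrAllDimensions} in every dimension, specialized here with $q_{n,2}=2^{-n}U_n$.

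\textbf{What each buys.} Your version is shorter and shows that the $S^2$ case is an instance of the general interpolation argument. The paper's version gives an explicit positive formula for the error itself, not only its sign, and needs no mean-value remainder. It also explains the role of equally spaced angles through aliasing rather than through the Chebyshev identity.

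Your reduction $t_n=\sin\theta\,p(\cos\theta)$ also settles, implicitly, the existence and uniqueness of $t_n$ via algebraic Lagrange interpolation at $n$ distinct points. The paper obtains this from discrete sine orthogonality; you might state it explicitly.
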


\begin{proof}
The sine interpolant exists and is unique by discrete sine orthogonality on the grid~\eqref{eq:uniformGreenNodes}. Oddness extends the interpolation to the equally spaced grid on the full circle; the values at $0$ and $\pi$ are automatically zero. We first show why the error changes sign precisely at these nodes. The useful observation is that the coefficients in~\eqref{eq:trigGreenFourier} are positive integrals of geometric sequences.

For $0<\rho<1$, put
\[
 F_\rho(\theta)=\frac{\sin\theta}{1-2\rho\cos\theta+\rho^2}
 =\sum_{k=1}^{\infty}\rho^{k-1}\sin(k\theta),
\]
and denote sine interpolation on our grid by $\mathcal I_n$. Write $N=n+1$. On this grid, the frequencies $2mN+k$ and $2mN-k$ give, respectively, $\sin(k\theta_j)$ and $-\sin(k\theta_j)$, while multiples of $N$ vanish. Summing the resulting geometric sequences gives
\[
 \mathcal I_nF_\rho(\theta)
 =\sum_{k=1}^{n}\frac{\rho^{k-1}-\rho^{2N-k-1}}{1-\rho^{2N}}\sin(k\theta).
\]
Subtracting this finite sum from $F_\rho$ yields
\begin{equation}\label{eq:geometricSineRemainder}
 F_\rho(\theta)-\mathcal I_nF_\rho(\theta)
 =\frac{\rho^n(1-\rho^2)\sin(N\theta)}
 {(1-\rho^{2N})(1-2\rho\cos\theta+\rho^2)}.
\end{equation}
Every factor apart from $\sin(N\theta)$ is positive.

For $k\geq2$, we have $2/(k^2-1)=\int_0^1(1-\rho^2)\rho^{k-2}\,d\rho$. Consequently~\eqref{eq:trigGreenFourier} can be written as
\[
 \mathcal G_2(\theta)=\left(2\log2-\frac32\right)\sin\theta
 +\int_0^1\frac{1-\rho^2}{\rho}\bigl(F_\rho(\theta)-\sin\theta\bigr)\,d\rho.
\]
The summable Fourier coefficients justify integration term by term. Interpolation commutes with this integral because it is a finite linear combination of the values at the nodes. Since it reproduces $\sin\theta$, formula~\eqref{eq:geometricSineRemainder} gives
\begin{equation}\label{eq:positiveTrigRemainder}
 \mathcal G_2(\theta)-t_n(\theta)
 =\sin((n+1)\theta)\int_0^1
 \frac{\rho^{n-1}(1-\rho^2)^2\,d\rho}
 {(1-\rho^{2n+2})(1-2\rho\cos\theta+\rho^2)}.
\end{equation}
For $0<\theta<\pi$, the integral is finite and strictly positive. This proves the asserted sign of the error; oddness gives the same sign identity on the negative half-period.

The reason this sign pattern proves best approximation is that the function $s(\theta)=\sgn\sin((n+1)\theta)$ annihilates $T_n$. Indeed, its Fourier series is
\begin{equation}\label{eq:S2squarewave}
 s(\theta)=\frac4\pi\sum_{j=0}^{\infty}
 \frac{\sin((2j+1)(n+1)\theta)}{2j+1},
\end{equation}
and all its frequencies exceed $n$. Therefore, for every $t\in T_n$,
\[
 \int_{-\pi}^{\pi}|\mathcal G_2-t|\,d\theta
 \geq\left|\int_{-\pi}^{\pi}(\mathcal G_2-t)s\,d\theta\right|
 =\int_{-\pi}^{\pi}|\mathcal G_2-t_n|\,d\theta.
\]
This proves minimality. Finally, insert~\eqref{eq:trigGreenFourier} and~\eqref{eq:S2squarewave} into $\int_{-\pi}^{\pi}\mathcal G_2s\,d\theta$. Orthogonality of the sine functions gives~\eqref{eq:trigGreenBestError}; the calculation is justified by $L^2$ convergence.
\end{proof}

We now return to algebraic polynomials. Write $t_n(\theta)=\sum_{k=1}^n b_k\sin(k\theta)$ and use the Chebyshev polynomials of the second kind, defined by $\sin(k\theta)=\sin\theta\,U_{k-1}(\cos\theta)$. Then
\[
 p_{n-1}(t)=\sum_{k=1}^n b_kU_{k-1}(t)
\]
is the polynomial of degree at most $n-1$ which interpolates $G_2$ at $t_j=\cos(j\pi/(n+1))$. Conversely, every algebraic polynomial of this degree corresponds to a sine polynomial in this way. Formula~\eqref{eq:zonaltrigchange} and Theorem~\ref{thm:trigGreenApproximation} therefore show that $p_{n-1}$ is a best approximant and give
\begin{equation}\label{eq:zonalBestApproximation}
 \inf_{\deg p\leq n-1}\frac12\int_{-1}^1|G_2(t)-p(t)|\,dt
 =2\sum_{j=0}^{\infty}
 \frac{1}{(2j+1)\bigl((n+1)^2(2j+1)^2-1\bigr)}.
\end{equation}
Thus the Chebyshev nodes arise from equally spaced angles in the trigonometric problem.

We shall also need the sign information. Put $s_n(t)=\sgn U_n(t)$. Formula~\eqref{eq:positiveTrigRemainder} gives $\sgn(G_2-p_{n-1})=s_n$ almost everywhere. Since $p(\cos\theta)\sin\theta$ is a sine polynomial of degree at most $n$, the same orthogonality argument gives
\begin{equation}\label{eq:S2annihilator}
 \int_{-1}^1p(t)s_n(t)\,dt=0,\qquad \deg p<n.
\end{equation}
In particular,
\begin{equation}\label{eq:S2besterror}
 \frac12\int_{-1}^1|G_2(t)-p_{n-1}(t)|\,dt
 =\frac12\int_{-1}^1G_2(t)s_n(t)\,dt.
\end{equation}
The sign which proves best approximation of the kernel will also provide the extremal function for the Bohr estimate.

\section{The Bohr constant on \texorpdfstring{$S^2$}{S2}}

We return to the constant $H(n,S^2)$ defined in~\eqref{eq:sphericalBohrFavard}. The kernel computation gives the exact bound for each $n$ and then the best coefficient independent of $n$.

\begin{theorem}\label{thm:sphere2}
For every positive integer $n$,
\begin{equation}\label{eq:H22fixed}
 H(n,S^2)=2\sum_{j=0}^{\infty}\frac{1}{(2j+1)\bigl((n+1)^2(2j+1)^2-1\bigr)}.
\end{equation}
Consequently,
\begin{equation}\label{eq:H22}
 H(S^2)=\lim_{n\to\infty}n^2H(n,S^2)=\frac74\zeta(3).
\end{equation}
In particular, every smooth function $f$ on $S^2$ with $f\perp\mathcal H_{<n}$ and $\norm{\Delta f}_\infty\leq1$ satisfies
\begin{equation}\label{eq:S2Bohr}
 \norm{f}_\infty\leq\frac{7\zeta(3)}{4n^2},
\end{equation}
and the coefficient $7\zeta(3)/4$ cannot be decreased uniformly in $n$.
\end{theorem}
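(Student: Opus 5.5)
The plan is to obtain the fixed-$n$ formula~\eqref{eq:H22fixed} by specializing Theorem~\ref{thm:BohrAllDimensions} to $d=2$, and then to get~\eqref{eq:H22} from a monotonicity argument for $n^2H(n,S^2)$.

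\textbf{Step 1: the extremal polynomial for $d=2$.} Here $d\mu_2=dt/2$, so Lemma~\ref{lem:monic} applies with the constant weight. The first task is to identify $q_{n,2}$ with the monic multiple $2^{-n}U_n$ of the Chebyshev polynomial of the second kind. For this I would use~\eqref{eq:S2annihilator}: the function $s_n=\sgn U_n$ annihilates all polynomials of degree less than $n$. Then $2^{-n}U_n$ satisfies the orthogonality condition~\eqref{eq:orthog}. That condition is also sufficient for minimality. For any monic $q$ of degree $n$, the difference $q-2^{-n}U_n$ has degree less than $n$, so
\[
\int|q|\,d\mu_2\ \geq\ \int q\,s_n\,d\mu_2\ =\ \int 2^{-n}U_n\,s_n\,d\mu_2\ =\ \int|2^{-n}U_n|\,d\mu_2 .
\]
By the uniqueness part of Lemma~\ref{lem:monic}, $q_{n,2}=2^{-n}U_n$. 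Its zeros are therefore $\cos(j\pi/(n+1))$, the same nodes used for $p_{n-1}$ in Section~\ref{sec:zonalapproximation}.

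\textbf{Step 2: the fixed-$n$ value.} Theorem~\ref{thm:BohrAllDimensions} now gives
\[
H(n,S^2)=\tfrac12\int_{-1}^1 G_2\,s_n\,dt .
\]
By~\eqref{eq:S2besterror} and~\eqref{eq:zonalBestApproximation}, this equals the series in~\eqref{eq:H22fixed}. Equivalently, one can take~\eqref{eq:trigGreenBestError} and divide by $4$, using~\eqref{eq:zonaltrigchange}. Sharpness at each fixed $n$ is already contained in Theorem~\ref{thm:BohrAllDimensions}.

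\textbf{Step 3: the uniform coefficient.} Write $m=2j+1$. The $j$-th term of $n^2H(n,S^2)$ is
\[
\frac{2}{m^3}\cdot\frac{n^2}{(n+1)^2-m^{-2}} .
\]
The key claim is that each such term is increasing in $n$. Setting $c=m^{-2}\in(0,1]$, the reciprocal of the second factor is
\[
\frac{(n+1)^2-c}{n^2}=1+\frac{2n+1-c}{n^2}.
\]
Viewing $n$ as a real variable, the derivative of $(2n+1-c)/n^2$ has numerator $-2n^2-2n+2cn<0$ for $n\geq1$. So the reciprocal decreases and the term increases, tending to $2/m^3$.

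By monotone convergence,
\[
\sup_n n^2H(n,S^2)=\lim_n n^2H(n,S^2)=2\sum_{j\ge0}(2j+1)^{-3}=2\cdot\tfrac78\zeta(3)=\tfrac74\zeta(3).
\]
This yields~\eqref{eq:H22} and~\eqref{eq:S2Bohr}. Because the value is a limit, no smaller coefficient can work for all $n$: for any $c<7\zeta(3)/4$, the fixed-$n$ sharpness from Theorem~\ref{thm:BohrAllDimensions} produces functions violating $\norm{f}_\infty\le c\,n^{-2}$ for large $n$.

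\textbf{Main obstacle.} The only delicate point is Step 1, matching $q_{n,2}$ with $U_n$ so that the general theorem and the explicit $S^2$ computation refer to the same nodes. Alternatively, this step can be bypassed: the proof of Theorem~\ref{thm:BohrAllDimensions} uses only the sign identity $\sgn(G_2-p_{n-1})=s_n$ and the annihilation property~\eqref{eq:S2annihilator}, and both are already established in Section~\ref{sec:zonalapproximation}. So one could rerun that argument directly with $s_n$.
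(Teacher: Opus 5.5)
Your proof is correct. It differs from the paper's in two modest ways.

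\textbf{The fixed-$n$ formula.} You specialize Theorem~\ref{thm:BohrAllDimensions}, which obliges you first to identify $q_{n,2}=2^{-n}U_n$. Your argument for this is sound: $s_n$ annihilates polynomials of degree less than $n$, this gives the one-line lower bound for an arbitrary monic competitor, and uniqueness from Lemma~\ref{lem:monic} finishes. It also makes explicit that the Chebyshev nodes on $S^2$ are the $d=2$ instance of the weighted $L^1$ extremal nodes.

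The paper instead takes exactly the bypass you mention at the end. The upper bound comes directly from \eqref{eq:kernelApproximationBound} with the interpolant of Section~\ref{sec:zonalapproximation}. Sharpness is proved by smoothing $g_0=s_n(e\cdot\omega)$ with the nonnegative polynomial kernels $a_m(t)=(m+1)\bigl((1+t)/2\bigr)^m$. The test functions are then themselves spherical polynomials, and no heat semigroup is needed.

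\textbf{The uniform coefficient.} The paper uses the termwise bound $n^2/\bigl(k((n+1)^2k^2-1)\bigr)\le k^{-3}$ together with dominated convergence. This gives the limit and $n^2H(n,S^2)\le 7\zeta(3)/4$ at every $n$. Your termwise monotonicity gives the same conclusions via monotone convergence, and slightly more: $n^2H(n,S^2)$ is strictly increasing in $n$. The paper explicitly leaves the corresponding monotonicity unproved for its analogues on $S^4$ and $S^6$.
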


\begin{proof}
Put $g=-\Delta f$ and fix a pole $e\in S^2$. If $f\perp\mathcal H_{<n}$ and $\norm{g}_\infty\leq1$, estimate~\eqref{eq:kernelApproximationBound} applies with $t_{n-1}=0$. Taking the interpolating polynomial $p_{n-1}$ from Section~\ref{sec:zonalapproximation} and using~\eqref{eq:zonalBestApproximation} gives the asserted upper bound. The matching lower bound comes from the sign of the kernel's interpolation error.

To prove sharpness in the smooth class, start with the bounded zonal function $g_0(\omega)=s_n(e\cdot\omega)$. Formula~\eqref{eq:S2zonal} and~\eqref{eq:S2annihilator} show that $g_0\perp\mathcal H_{<n}$. We approximate it by smooth functions without increasing its norm or introducing low harmonics. For a positive integer $m$, set
\[
 a_m(t)=(m+1)\left(\frac{1+t}{2}\right)^m,
 \qquad
 g_m(\omega)=\int_{S^2}a_m(\omega\cdot\eta)g_0(\eta)\,d\sigma_2(\eta).
\]
The kernel is nonnegative and $\frac12\int_{-1}^1a_m(t)\,dt=1$, so $\norm{g_m}_\infty\leq1$. Each $g_m$ is a spherical polynomial. Formula~\eqref{eq:S2zonal} shows that this convolution acts by a scalar on every harmonic subspace, hence $g_m\perp\mathcal H_{<n}$. Finally, for $0<\delta<2$, its mass on $t\leq1-\delta$ is $(1-\delta/2)^{m+1}$. Consequently $g_m$ tends to $g_0$ at every continuity point of $g_0$, and therefore almost everywhere.

Let $f_m$ be the mean-zero Green potential of $g_m$. It is a smooth spherical polynomial, satisfies $-\Delta f_m=g_m$, and has no harmonics of degrees below $n$. Dominated convergence in~\eqref{eq:GreenRepresentation} now gives
\[
 f_m(e)\longrightarrow\int_{S^2}G_2(e\cdot\omega)g_0(\omega)\,d\sigma_2(\omega)
 =\frac12\int_{-1}^1G_2(t)s_n(t)\,dt.
\]
Every $f_m$ is admissible in~\eqref{eq:sphericalBohrFavard}. The lower bound obtained from $\norm{f_m}_\infty\geq|f_m(e)|$ matches~\eqref{eq:S2besterror}. We have proved
\begin{equation}\label{eq:H22integral}
 H(n,S^2)=\frac12\int_0^\pi G_2(\cos\theta)\sin\theta\,\sgn\sin((n+1)\theta)\,d\theta.
\end{equation}

The value of this integral was computed in Section~\ref{sec:zonalapproximation}. Combining~\eqref{eq:S2besterror} with~\eqref{eq:zonalBestApproximation} proves~\eqref{eq:H22fixed}.

For each positive odd integer $k$,
\[
 0\leq\frac{n^2}{k\bigl((n+1)^2k^2-1\bigr)}\leq\frac1{k^3},
 \qquad
 \lim_{n\to\infty}\frac{n^2}{k\bigl((n+1)^2k^2-1\bigr)}=\frac1{k^3}.
\]
Dominated convergence in~\eqref{eq:H22fixed} gives
\[
 \lim_{n\to\infty}n^2H(n,S^2)=2\sum_{j=0}^{\infty}\frac1{(2j+1)^3}=\frac74\zeta(3).
\]
The same termwise bound shows that $n^2H(n,S^2)\leq7\zeta(3)/4$ for every $n$. Since the exact fixed-$n$ bounds were obtained within the smooth class, their limit proves both~\eqref{eq:H22} and the sharpness of~\eqref{eq:S2Bohr}.
\end{proof}

For the first three values of $n$, formula~\eqref{eq:H22integral} gives
\[
 \begin{aligned}
 H(1,S^2)&=\log2,\\
 H(2,S^2)&=\frac32\log3-2\log2,\\
 H(3,S^2)&=2\log2-\sqrt2\log(1+\sqrt2).
 \end{aligned}
\]

\section{Best \texorpdfstring{$L^1$}{L1} approximation of the zonal kernel on \texorpdfstring{$S^3$}{S3}}\label{sec:zonalS3}

We specialize Theorem~\ref{thm:BohrAllDimensions} to $S^3$. The weight induced by surface measure differs from that on $S^2$, and the interpolation nodes are again characterized by the corresponding $L^1$ extremal polynomial. A positive integral representation of the Green remainder will give both the exact constants at each $n$ and the starting point for their asymptotic analysis.

For a fixed pole $e$, put $t=e\cdot\omega=\cos\theta$. The induced probability measure is
\begin{equation}\label{eq:measures}
 d\mu_3(t)=\frac2\pi\sqrt{1-t^2}\,dt.
\end{equation}
In the angular variable, this measure is $(2/\pi)\sin^2\theta\,d\theta$ on $[0,\pi]$. The mean-zero Green kernel of $-\Delta_{S^3}$ is
\begin{equation}\label{eq:G3}
 G_3(\cos\theta)=\frac12(\pi-\theta)\cot\theta-\frac14.
\end{equation}
For $0<\theta<\pi$, direct differentiation gives $\Delta G_3=1$, where the zonal Laplacian is $\partial_\theta^2+2\cot\theta\,\partial_\theta$. Near zero, $G_3(\cos\theta)=\pi/(2\theta)+O(1)$ and its derivative with respect to $\theta$ is $-\pi/(2\theta^2)+O(1)$. The area of the geodesic sphere of radius $\theta$, divided by the area $2\pi^2$ of $S^3$, is $(2/\pi)\sin^2\theta$, so the flux of $-\nabla G_3$ tends to one. Finally, integration against $(2/\pi)\sin^2\theta\,d\theta$ gives mean zero. Thus $-\Delta_{S^3}G_3=\delta_e-1$ with respect to normalized surface measure, proving~\eqref{eq:GreenRepresentation} in this case as well.

Let $f\perp\mathcal H_{<n}$ and put $g=-\Delta f$. Since the constant harmonic is absent,
\[
 f(e)=\int_{S^3}G_3(e\cdot\omega)g(\omega)\,d\sigma_3(\omega).
\]
If $p$ is a polynomial of degree less than $n$, the spectral gap gives
\begin{equation}\label{eq:upperapprox}
 |f(e)|\leq\norm{g}_\infty
 \int_{-1}^1|G_3(t)-p(t)|\,d\mu_3(t).
\end{equation}
To make this bound sharp, we need the sign of the approximation error to annihilate every polynomial of degree less than $n$. We therefore choose the interpolation nodes as the zeros of a monic polynomial whose sign has exactly these vanishing moments. Lemma~\ref{lem:monic} supplies this polynomial.

Apply Lemma~\ref{lem:monic} with $w(t)=\sqrt{1-t^2}$, and let $q_n$ be its monic minimizer. Thus
\begin{equation}\label{eq:q3orthog}
 \int_{-1}^1p(t)\sgn q_n(t)\sqrt{1-t^2}\,dt=0,
 \qquad \deg p<n.
\end{equation}
It is the sign of $q_n$ that has the required vanishing moments. This is the property that will make the interpolation error an extremal kernel.

\begin{theorem}\label{thm:S3fixedn}
For a positive integer $n$, let $p_{n-1}$ interpolate $G_3$ at the zeros of $q_n$. Then $p_{n-1}$ is a best $L^1(\mu_3)$ approximant to $G_3$ among polynomials of degree at most $n-1$, and
\begin{equation}\label{eq:sharp-sign}
 \begin{aligned}
 H(n,S^3)&=\int_{-1}^1|G_3(t)-p_{n-1}(t)|\,d\mu_3(t)\\
 &=\int_{-1}^1G_3(t)\sgn q_n(t)\,d\mu_3(t).
 \end{aligned}
\end{equation}
\end{theorem}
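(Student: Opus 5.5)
This is the case $d=3$ of Theorem~\ref{thm:BohrAllDimensions}, so the plan is to identify the data and invoke that theorem. We would repeat its argument only where the explicit kernel~\eqref{eq:G3} makes a step concrete.

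First, identify the objects. By~\eqref{eq:generalZonalMeasure} with $d=3$ we have $c_3=\Gamma(2)/(\sqrt\pi\,\Gamma(3/2))=2/\pi$, so $\mu_3$ in~\eqref{eq:measures} coincides with the general zonal measure. The minimizer $q_n$ for the weight $\sqrt{1-t^2}$ is the same as the minimizer for $d\mu_3$, since the two differ by a constant factor. Hence $q_n=q_{n,3}$, and~\eqref{eq:q3orthog} is~\eqref{eq:generalSignMoments}. Next, $G_3$ in~\eqref{eq:G3} is the mean-zero Green kernel, as verified just before the statement: it satisfies $-\Delta G_3=\delta_e-1$ and has mean zero. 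The Green kernel with these properties is unique, because a zonal solution of the homogeneous equation with mean zero vanishes. Therefore $G_3$ equals the Poisson-integral kernel~\eqref{eq:generalGreenPoisson}, and~\eqref{eq:generalGreenDerivatives} gives $G_3^{(k)}>0$ on $(-1,1)$ for all $k\ge1$.

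With these identifications, the key steps follow the general proof. (i) The Hermite remainder $G_3-p_{n-1}=G_3^{(n)}(\xi)q_n/n!$ shows that the error has sign $\sgn q_n$. (ii) Pairing with $\sgn q_n$ and using~\eqref{eq:q3orthog} gives $\int|G_3-p|\,d\mu_3\ge\int G_3\sgn q_n\,d\mu_3$ for every competitor $p$, with equality for $p_{n-1}$. Complex competitors are reduced to their real parts. (iii) The upper bound $H(n,S^3)\le\int|G_3-p_{n-1}|\,d\mu_3$ is~\eqref{eq:upperapprox}. (iv) For the lower bound, take the source $g_0=\sgn q_n(e\cdot\omega)$. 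It is orthogonal to $\mathcal H_{<n}$: averaging over the stabilizer of $e$ reduces this to~\eqref{eq:q3orthog}. Then smooth $g_0$, by the heat semigroup or by the polynomial kernels $a_m$ used on $S^2$, without raising its norm or creating low harmonics. Dominated convergence with $G_3\in L^1(\mu_3)$ then gives $f_m(e)\to\int G_3\sgn q_n\,d\mu_3$.

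The only point needing care is the positivity of all derivatives of $G_3$, since checking it directly from $(\pi-\theta)\cot\theta$ is awkward. I would avoid the direct check and rely on the uniqueness identification with~\eqref{eq:generalGreenPoisson} described above. Alternatively, one could expand $G_3$ in Chebyshev polynomials of the second kind, with coefficients $1/(\ell(\ell+2))$, and write these coefficients as integrals of $\rho^{\ell-1}$ against a positive density, exactly as on $S^2$. Either route makes the result an immediate corollary of Theorem~\ref{thm:BohrAllDimensions}.
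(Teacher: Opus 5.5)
Your proposal is correct, but the key step reaches the sign of the interpolation error by a different route from the paper's proof. You treat the statement as the case $d=3$ of Theorem~\ref{thm:BohrAllDimensions}. After checking $c_3=2/\pi$ and $q_n=q_{n,3}$, you identify the explicit kernel \eqref{eq:G3} with the Poisson-integral kernel \eqref{eq:generalGreenPoisson} by uniqueness of the mean-zero Green kernel. That identification is sound: the difference is an integrable zonal function whose Gegenbauer coefficients all vanish, and both kernels are continuous on $(-1,1)$. You then import $G_3^{(k)}>0$ from \eqref{eq:generalGreenDerivatives} and read off the sign of $G_3-p_{n-1}$ from the mean-value interpolation remainder.

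The paper instead proves the sign directly from the Stieltjes representation \eqref{eq:Stieltjes}. For each resolvent $(a-t)^{-1}$, interpolation has the exact remainder $q_n(t)/((a-t)q_n(a))$, which gives the explicit positive formula \eqref{eq:remainder3} for the error. The remaining steps coincide: the moment conditions, the upper bound \eqref{eq:upperapprox}, and the sign source with smoothing.

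Your route is shorter and makes plain that the fixed-$n$ statement needs nothing specific to $S^3$. The paper's route is self-contained, and it is exactly the direct check you found awkward, since every derivative of $(a-t)^{-1}$ is positive for $a>1$. It also gives more than the sign. The exact integral form of the error has an entire-function analogue \eqref{eq:Berror}, which drives the uniform bound $n^2H(n,S^3)\le\beta$. The same resolvent mechanism \eqref{eq:generalResolventRemainder} is reused on $S^4$ and $S^6$.

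There is one small slip in your alternative argument. The coefficient of $U_\ell$ in $G_3$ is $(\ell+1)/(\ell(\ell+2))$, because the reproducing kernel of $\mathcal H_\ell$ on $S^3$ is $(\ell+1)U_\ell$; the multiplier $1/(\ell(\ell+2))$ is not the expansion coefficient. The alternative still works, since $(\ell+1)/(\ell(\ell+2))=\int_0^1\rho^{\ell-1}(1+\rho^2)/2\,d\rho$ is again an integral of $\rho^{\ell-1}$ against a positive density.
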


\begin{proof}
The following representation makes the sign of the Green interpolation
error visible:
\begin{equation}\label{eq:Stieltjes}
 G_3(t)=-\frac14+\int_1^\infty\frac{a}{2\sqrt{a^2-1}}
 \left(\frac1{a-t}-\frac1a\right)\,da.
\end{equation}
Indeed, putting $a=\cosh u$ reduces the integral to
$\tfrac t2\int_0^\infty(\cosh u-t)^{-1}\,du$. For $t=\cos\theta$
this equals $\tfrac12(\pi-\theta)\cot\theta$.

Let $p_{n-1}$ interpolate $G_3$ at the zeros of $q_n$. Interpolation
of $(a-t)^{-1}$ has the exact remainder
$q_n(t)/((a-t)q_n(a))$. Therefore
\begin{equation}\label{eq:remainder3}
 G_3(t)-p_{n-1}(t)
 =q_n(t)\int_1^\infty
 \frac{a\,da}{2\sqrt{a^2-1}(a-t)q_n(a)}.
\end{equation}
The integral is strictly positive for $-1<t<1$, since $q_n$ has all its zeros in that interval and is monic. Consequently,
\begin{equation}\label{eq:error-sign}
 \sgn(G_3-p_{n-1})=\sgn q_n.
\end{equation}
Integrating the error of any competing polynomial against $\sgn q_n$ and using~\eqref{eq:q3orthog} gives a lower bound for its $L^1(\mu_3)$ norm. By~\eqref{eq:error-sign}, the interpolant attains that bound. This proves minimality and the second equality in~\eqref{eq:sharp-sign}; estimate~\eqref{eq:upperapprox} gives the corresponding upper bound for $H(n,S^3)$.

It remains to prove sharpness within the smooth class of~\eqref{eq:sphericalBohrFavard}. The zonal function $g(\omega)=\sgn q_n(e\cdot\omega)$
annihilates $\mathcal H_{<n}$: average a spherical harmonic over
rotations fixing $e$ and use \eqref{eq:orthog}. Its Green potential
attains the value on the right of \eqref{eq:sharp-sign} at $e$.
Apply the spherical heat semigroup to $g$. This preserves the gap and
does not increase its supremum norm. The corresponding smooth Green
potentials converge at $e$, since $G_3$ is integrable and the smoothed
sign functions converge almost everywhere. Together with
\eqref{eq:upperapprox}, this proves \eqref{eq:sharp-sign}.

\end{proof}

Formula~\eqref{eq:sharp-sign} determines the exact constant for each $n$, although the polynomial $q_n$ is in general specified by its extremal property. For the first few degrees the moment conditions give
\[
 q_1(t)=t,\qquad q_2(t)=t^2-a^2,\qquad
 q_3(t)=t^3-\bigl(1-2^{-2/3}\bigr)t,
\]
where $a$ is the unique number in $(0,1)$ satisfying
\[
 \arcsin a+a\sqrt{1-a^2}=\frac\pi4.
\]
Indeed, parity leaves only the constant moment to check for $q_2$, and this moment gives the displayed equation. For $q_3(t)=t(t^2-b^2)$, only the moment against $t$ remains; its vanishing gives $(1-b^2)^{3/2}=1/2$. The constants are
\begin{equation}\label{eq:S3smallconstants}
 \begin{aligned}
 H(1,S^3)&=\frac12,\\
 H(2,S^3)&=\frac{2(1-a^2)\arcsin a}{\pi}\approx0.221537610357,\\
 H(3,S^3)&=2^{-2/3}-\frac12\approx0.129960524947.
 \end{aligned}
\end{equation}
For odd $n$, parity and the odd part of $G_3$ reduce~\eqref{eq:sharp-sign} to $H(n,S^3)=\int_0^1t\sgn q_n(t)\,dt$, which gives the first and third values. For $n=2$, the constant term in the even part of $G_3$ cancels by the zero moment; integration of $t\arcsin t$ on the two sign intervals gives the middle value.

The same result can be read as a trigonometric approximation theorem. Put $t=\cos\theta$ and incorporate the factor $\sin^2\theta$ from the weight and the change of variable into the kernel:
\begin{equation}\label{eq:trigGreen3}
 \mathcal G_3(\theta)=G_3(\cos\theta)\sin^2\theta
 =\frac14(\pi-\theta)\sin2\theta-\frac14\sin^2\theta,
 \qquad 0<\theta<\pi.
\end{equation}
Both endpoint values are zero. Extending evenly to $[-\pi,\pi]$ and then periodically gives a continuous function.

An algebraic polynomial $p$ of degree at most $n-1$ corresponds to $t_{n+1}(\theta)=\sin^2\theta\,p(\cos\theta)$. These are exactly the even trigonometric polynomials of degree at most $n+1$ which vanish at $0$ and $\pi$; denote their space by $\mathcal V_{n+1}$. Indeed, every even trigonometric polynomial is a polynomial in $\cos\theta$, and the two endpoint conditions mean divisibility by $1-t^2$. The change of variable and evenness give
\begin{equation}\label{eq:S3trigchange}
 \int_{-1}^1|G_3(t)-p(t)|\,d\mu_3(t)
 =\frac1\pi\int_{-\pi}^{\pi}|\mathcal G_3(\theta)-t_{n+1}(\theta)|\,d\theta.
\end{equation}
Consequently,
\begin{equation}\label{eq:S3trigminimum}
 H(n,S^3)=\frac1\pi\inf_{v\in\mathcal V_{n+1}}
 \int_{-\pi}^{\pi}|\mathcal G_3(\theta)-v(\theta)|\,d\theta.
\end{equation}

If $t_1,\ldots,t_n$ are the zeros of $q_n$, the minimizing polynomial is $\sin^2\theta\,p_{n-1}(\cos\theta)$. It interpolates $\mathcal G_3$ at $\theta_j=\arccos t_j$, as well as at the two endpoints. These conditions determine it uniquely in $\mathcal V_{n+1}$. Its minimality follows directly from Theorem~\ref{thm:S3fixedn} and~\eqref{eq:S3trigchange}. The angular nodes are determined by the weighted moment conditions and need not be equally spaced. The two endpoint conditions are essential: approximation by all trigonometric polynomials of degree at most $n+1$ would be a different problem.

For even degrees, a further change of angle gives a shorter formula for the constant. Put
\begin{equation}\label{eq:S3cosineExtremal}
 P_m(t)=\frac{q_{2m}(\cos(t/2))}{q_{2m}(1)},\qquad
 \ell_m=\int_0^\pi|P_m(t)|\,dt.
\end{equation}
Since $q_{2m}$ is even, $P_m$ is a cosine polynomial of degree $m$, with $P_m(0)=1$. It has the least integral norm among real cosine polynomials of degree at most $m$ with this normalization. Indeed, a variation vanishing at zero is divisible by $1-\cos t$; after the substitution $t=2\theta$, the first variation vanishes by~\eqref{eq:q3orthog}. Convexity gives minimality. Equivalently, with $s_m=\sgn P_m$,
\begin{equation}\label{eq:S3cosineReproduction}
 \int_0^\pi T(t)s_m(t)\,dt=\ell_m T(0)
\end{equation}
for every real cosine polynomial $T$ of degree at most $m$.

\begin{proposition}\label{prop:S3reciprocal}
For every positive integer $m$,
\begin{equation}\label{eq:S3finiteReciprocal}
 H(2m,S^3)=\frac{\ell_m}{4\pi}
 \int_0^\infty\frac{d\eta}{P_m(i\eta)}.
\end{equation}
\end{proposition}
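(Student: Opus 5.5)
The plan is to start from the remainder formula~\eqref{eq:remainder3} rather than from the sign integral. This is the route that produces the factor $1/q_n(a)$, which becomes $1/P_m(i\eta)$. With $n=2m$, combining~\eqref{eq:sharp-sign} and~\eqref{eq:error-sign} with Fubini (every integrand is nonnegative) gives
\[
 H(2m,S^3)=\int_1^\infty\frac{a}{2\sqrt{a^2-1}\,q_n(a)}\,I(a)\,da,
 \qquad I(a)=\int_{-1}^1\frac{|q_n(t)|}{a-t}\,d\mu_3(t).
\]
Substitute $a=\cosh u$, so that $da/\sqrt{a^2-1}=du$, and then $u=\eta/2$. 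Since $\cos(i\eta/2)=\cosh(\eta/2)$, we have $q_n(a)=q_n(1)P_m(i\eta)$. The outer integral is therefore already of the form $\int_0^\infty d\eta/P_m(i\eta)$, multiplied by the weight $a\,I(a)$.

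\textbf{Reducing $a\,I(a)$.} Since $q_n$ is even, symmetrizing in $t$ gives
\[
 a\,I(a)=\int_{-1}^1|q_n(t)|\,\frac{a^2}{a^2-t^2}\,d\mu_3(t).
\]
Now pass to the variables of~\eqref{eq:S3cosineExtremal}: $t=\cos\theta$ and $\tau=2\theta$, so that $s:=t^2=(1+\cos\tau)/2$ and $1-t^2=(1-\cos\tau)/2$. The measure $d\mu_3$ turns into a multiple of $(1-s)\,d\tau$ on $[0,\pi]$, and $|q_n(t)|=q_n(1)|P_m(\tau)|$. Hence
\[
 a\,I(a)=\frac{2}{\pi}\,q_n(1)\int_0^\pi P_m(\tau)s_m(\tau)\,\frac{(1-s)\,a^2}{a^2-s}\,d\tau .
\]
Next I split off the polynomial part. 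Let $Q(s)=q_n(\sqrt s)$, a polynomial of degree $m$, so that $P_m(\tau)=Q(s)/Q(1)$. Then
\[
 \frac{(1-s)\bigl(Q(s)-Q(a^2)\bigr)}{a^2-s}
\]
is a cosine polynomial of degree at most $m$ that vanishes at $\tau=0$, where $s=1$. By the reproducing identity~\eqref{eq:S3cosineReproduction} its integral against $s_m$ is zero. The remaining piece is
\[
 \frac{Q(a^2)}{Q(1)}\int_0^\pi s_m(\tau)\,\frac{(1-s)\,a^2}{a^2-s}\,d\tau .
\]
Its factor $Q(a^2)$ cancels the $q_n(a)$ in the outer integrand. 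What is left is a double integral of $s_m(\tau)$ against an explicit elementary kernel in $(\tau,u)$.

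\textbf{The main obstacle.} One then has to show that this leftover double integral collapses to $\frac{\ell_m}{4\pi}\int_0^\infty d\eta/P_m(i\eta)$, that is, that after the $u$-integration the weight multiplying $1/P_m(i\eta)$ is effectively the constant $\ell_m/(4\pi)$. This is the step I expect to be hardest. My first attempt would be to write
\[
 \frac{(1-s)a^2}{a^2-s}=a^2-\frac{a^2(a^2-1)}{a^2-s}.
\]
The first term contributes $\ell_m$ times a multiple of $a^2$ through~\eqref{eq:S3cosineReproduction}. In the second term I would exchange the order of integration and carry out the $u$-integral in closed form: with $a=\cosh u$ and $s=\cos^2\theta$, one has $a^2-s=\sinh^2u+\sin^2\theta$. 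The aim is to recognize the result as a cosine polynomial in $\tau$ of degree at most $m$ plus a remainder that integrates to zero against $s_m$. If that direct route stalls, the fallback is a contour deformation in $\eta$: the function $1/P_m(i\eta)$ has no zeros on the imaginary direction beyond the real zeros of $P_m$, so one can rotate the contour, turning the boundary terms into $\int_0^\pi|P_m|\,d\tau=\ell_m$. I would check the constant $1/(4\pi)$ against the value $H(2,S^3)$ in~\eqref{eq:S3smallconstants}, which is the case $m=1$.
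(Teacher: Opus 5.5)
\textbf{There is a genuine gap.} Your opening reduction is correct, but it goes in a circle. Once you split off $Q(s)-Q(a^2)$ and cancel $Q(a^2)=q_n(a)$, you are left with
\[
 H(2m,S^3)=\frac1\pi\int_0^\infty\int_0^\pi s_m(\tau)\,
 \frac{(1-s)\cosh^2u}{\cosh^2u-s}\,d\tau\,du .
\]
This is just $\int_{-1}^1G_3\,\sgn q_n\,d\mu_3$ written through the Stieltjes representation~\eqref{eq:Stieltjes}; the constant terms vanish against $\sgn q_n$. The factor $1/q_n(a)$ that was supposed to become $1/P_m(i\eta)$ has been cancelled away. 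So the step you call the main obstacle is the whole content of the proposition, and neither of your routes supplies it.

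The direct route only reproduces the sign integral. With $s=\cos^2(\tau/2)$ one has $\int_0^\infty(1-s)s\,du/(\cosh^2u-s)=\frac14(\pi-\tau)\sin\tau$, and the remaining term $1-s$ is annihilated by~\eqref{eq:S3cosineReproduction}. Hence $H(2m,S^3)=\frac1{4\pi}\int_0^\pi(\pi-\tau)\sin\tau\,s_m(\tau)\,d\tau$. This kernel is not a cosine polynomial, so~\eqref{eq:S3cosineReproduction} alone does not evaluate the pairing. Integrating by parts over the sign intervals shows that it equals $\frac1{2\pi}\sum_j\varepsilon_j(\pi-t_j)(1-\cos t_j)$, where $t_1<\cdots<t_m$ are the zeros of $P_m$ and $\varepsilon_j=(-1)^{j-1}$. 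This depends on the individual zeros, and nothing in your route relates it back to $\int_0^\infty d\eta/P_m(i\eta)$. Your split $a^2-a^2(a^2-1)/(a^2-s)$ would in any case produce two separately divergent $u$-integrals.

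The contour fallback is not viable as stated. On the real axis $1/P_m$ is periodic with poles, so there is no convergent integral to rotate onto. Nor does $\ell_m$ arise as a boundary term.

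The missing idea is an evaluation of $\int_0^\infty d\eta/P_m(i\eta)$ in terms of the same zeros. The paper does this as follows.
\begin{enumerate}
\item Write $P_m(t)=A_m(\cos t)$ and $x_j=\cos t_j$, and expand $1/A_m$ in partial fractions.
\item Apply~\eqref{eq:S3cosineReproduction} to $\cos k\tau$ and integrate by parts. This gives $2\sum_j\varepsilon_j\sin(kt_j)=k\ell_m$ for $k=1,\ldots,m$.
\item Since $\sin kt=\sin t\,U_{k-1}(\cos t)$ and $U_{k-1}(1)=k$, this is a quadrature rule reproducing evaluation at $1$ for polynomials of degree less than $m$. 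Applying it to the Lagrange basis gives $1/A_m'(x_j)=2\varepsilon_j(1-x_j)\sin t_j/\ell_m$.
\item Combine these residues with $\int_0^\infty d\eta/(\cosh\eta-\cos t)=(\pi-t)/\sin t$. The right-hand side of~\eqref{eq:S3finiteReciprocal} then becomes exactly $\frac1{2\pi}\sum_j\varepsilon_j(\pi-t_j)(1-\cos t_j)$.
\end{enumerate}
Thus $\ell_m$ enters through the residues, where it cancels the prefactor. Without this residue formula your argument does not close.
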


The integrand is positive: every zero of the algebraic polynomial defining $P_m$ lies between $-1$ and $1$, whereas $\cos(i\eta)=\cosh\eta\geq1$. The formula thus replaces integration of a sign-changing kernel by one positive integral.

\begin{proof}
Write $P_m(t)=A_m(\cos t)$, and denote its zeros on $(0,\pi)$ by $t_1<\cdots<t_m$. Put $x_j=\cos t_j$ and $\varepsilon_j=(-1)^{j-1}$. Applying~\eqref{eq:S3cosineReproduction} to $\cos kt$ and integrating by parts gives
\[
 2\sum_{j=1}^m\varepsilon_j\sin(kt_j)=k\ell_m,
 \qquad k=1,\ldots,m.
\]
Since $\sin kt=\sin t\,U_{k-1}(\cos t)$ and $U_{k-1}(1)=k$, these identities reproduce evaluation at $1$ for every algebraic polynomial of degree less than $m$. Applying them to the Lagrange basis at $x_1,\ldots,x_m$, and using $A_m(1)=1$, yields
\[
 \frac1{A_m'(x_j)}
 =\frac{2\varepsilon_j(1-x_j)\sin t_j}{\ell_m}.
\]
Now expand $1/A_m(v)$ in simple fractions and use
\[
 \int_0^\infty\frac{d\eta}{\cosh\eta-\cos t}
 =\frac{\pi-t}{\sin t},\qquad 0<t<\pi.
\]
It follows that the right-hand side of~\eqref{eq:S3finiteReciprocal} is
\[
 \frac1{2\pi}\sum_{j=1}^m
 \varepsilon_j(\pi-t_j)(1-\cos t_j).
\]
To identify this sum with the Bohr constant, pair $\theta$ and $\pi-\theta$ in~\eqref{eq:sharp-sign}, and then put $t=2\theta$. The term involving $1-\cos t$ vanishes by~\eqref{eq:S3cosineReproduction}, leaving
\[
 H(2m,S^3)=\frac1{4\pi}\int_0^\pi
 (\pi-t)\sin t\,s_m(t)\,dt.
\]
Integration by parts over the sign intervals gives the preceding sum: the remaining terms cancel because~\eqref{eq:S3cosineReproduction} gives $\int s_m=\int s_m\cos t=\ell_m$. This proves the formula.
\end{proof}

\section{The asymptotic Bohr constant on \texorpdfstring{$S^3$}{S3}}\label{sec:rescaling}

Having determined the constant for each $n$, we now study $n^2H(n,S^3)$ as $n$ tends to infinity. The relevant angular scale near a pole is $1/n$. We normalize the interpolating-node polynomial to have value one at the pole and rescale its argument:
\begin{equation}\label{eq:FN}
 Q_n(\theta)=\frac{q_n(\cos\theta)}{q_n(1)},
 \qquad F_n(z)=Q_n(z/n).
\end{equation}
The trigonometric degree grows with $n$, but the rescaled functions have exponential type at most one. The reason a specific entire extremal function will appear is that $Q_n$ solves a minimum-norm problem with its value at zero prescribed. To make this precise, put
\[
 \mathcal T_n'=\operatorname{span}\{e^{ik\theta}:k=-n,-n+2,\ldots,n-2,n\}.
\]
Here the frequencies are taken in steps of two and have the same parity as $n$, in accordance with the symmetry $q_n(-t)=(-1)^nq_n(t)$.

\begin{lemma}\label{lem:point}
The function $Q_n$ minimizes
\[
 J(T)=\int_{-\pi/2}^{\pi/2}|T(\theta)|\,d\theta
\]
over $T\in\mathcal T_n'$ with $T(0)=1$. Furthermore,
$\norm{Q_n}_\infty=1$ on $\R$ and $|F_n(z)|\leq e^{|\operatorname{Im}z|}$.
\end{lemma}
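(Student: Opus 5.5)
The plan is to identify $\mathcal T_n'$ with the set of functions $\theta\mapsto p(\cos\theta)$, where $p$ ranges over polynomials of degree at most $n$ with parity $(-1)^n$, and then to transport the extremal property of $q_n$ from Lemma~\ref{lem:monic} into the angular variable.

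\textbf{Identification.} $\mathcal T_n'$ is spanned by $e^{ik\theta}$ with $k\equiv n\pmod 2$ and $|k|\le n$. It is not only even functions. For instance, $e^{in\theta}$ and $e^{-in\theta}$ both belong to it, and so does $\sin n\theta$. I therefore plan to split $T=T_e+T_o$ into its even and odd parts in $\theta$.

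The even part is a polynomial in $\cos\theta$ of degree at most $n$ with parity $(-1)^n$. Let $R(\theta)=T(\theta)+T(-\theta)$. The substitution $\theta\mapsto-\theta$ preserves the interval, and $|T(\theta)|+|T(-\theta)|\ge|R(\theta)|$. Hence $J(T)\ge J(T_e)$, with $T_e(0)=T(0)$. For complex $T$, taking the real part likewise does not increase $J$. So it suffices to minimize over real even $T$, that is, over $T=p(\cos\theta)$.

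\textbf{Reduction to the weighted problem.} With $t=\cos\theta$ on $[0,\pi/2]$ we have $d\theta=dt/\sqrt{1-t^2}$, and this only covers $t\in[0,1]$. Parity of $p$ extends the integral to $[-1,1]$ with the weight $(1-t^2)^{-1/2}$. That is the wrong weight: the lemma concerns $q_n$, which is extremal for $\sqrt{1-t^2}$.

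The fix is to use the half-angle map, as in~\eqref{eq:S3cosineExtremal}. Write $\cos\theta$ in terms of the original angle $\phi$. I would check whether $\theta\in(-\pi/2,\pi/2)$ together with the parity structure corresponds to $t=\cos\phi$ under the measure $\sin^2\phi\,d\phi$. This comes from the factor $\sin^2$ in~\eqref{eq:trigGreen3}, once $T$ is allowed to absorb it.

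Concretely, I would try to show directly that $\sgn Q_n$ annihilates every $T\in\mathcal T_n'$ with $T(0)=0$ on $(-\pi/2,\pi/2)$. Such a $T$ factors as $\sin\theta$ (or $1-\cos\theta$) times a lower-degree element. The annihilation should then reduce to~\eqref{eq:q3orthog} through the change of variable and integration by parts, mimicking the argument given after~\eqref{eq:S3cosineExtremal}. Given this annihilation,
\[
\int|T|\;\ge\;\operatorname{Re}\int T\,\sgn Q_n\;=\;\int Q_n\,\sgn Q_n \;=\; J(Q_n)
\]
whenever $T(0)=1$, which is minimality. This annihilation is the main obstacle, and I would settle the exact weight bookkeeping first.

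\textbf{Sup norm.} Since $|q_n(t)|$ for real $t=\cos\theta\in[-1,1]$ is bounded by $q_n(1)$, we get $\|Q_n\|_\infty=1$. I would prove this bound by monotonicity of $|q_n|$ outside its zeros, or by the alternation/convexity property of $L^1$-extremal polynomials. If neither works, an extremal argument is available: if $|Q_n|$ exceeded $1$ somewhere, a rotation or comparison would lower $J$.

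\textbf{Growth bound.} $F_n(z)=Q_n(z/n)$ is a trigonometric polynomial of degree $n$ in $z/n$, bounded by $1$ on $\mathbb R$. Bernstein's inequality, i.e.\ Phragmén--Lindelöf for exponential type, then gives $|Q_n(\theta)|\le e^{n|\operatorname{Im}\theta|}$. Substituting $\theta=z/n$ yields $|F_n(z)|\le e^{|\operatorname{Im} z|}$.
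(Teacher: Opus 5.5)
Your reduction to real even competitors $T=P(\cos\theta)$ is sound. So is your plan to show that $\sgn Q_n$ annihilates every element of $\mathcal T_n'$ vanishing at $0$, and so is the final Bernstein step. The gap is that this annihilation, which is the whole content of the minimality claim, is never proved. You leave it as ``the main obstacle,'' and the diagnosis that leads you there is mistaken.

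The Chebyshev weight $(1-t^2)^{-1/2}$ produced by $t=\cos\theta$ is not the wrong weight, and no half-angle substitution is needed. In~\eqref{eq:S3cosineExtremal} that substitution only re-expresses the even-degree case, where $1-\cos t=2\sin^2\theta$ supplies the factor discussed below.

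What you are missing is that the parity built into $\mathcal T_n'$ gives a second zero. Let $V=P(\cos\theta)$ be a real even element with $V(0)=0$. Then $P$ has parity $n$ and $P(1)=0$, hence also $P(-1)=0$. So $P(t)=(1-t^2)p(t)$ with $\deg p\leq n-2$. The variation therefore carries the factor $\sin^2\theta$, not merely $\sin\theta$ or $1-\cos\theta$ as you suggest. This factor converts the Chebyshev weight exactly into the weight of~\eqref{eq:q3orthog}. Using $\sgn Q_n(\theta)=\sgn q_n(\cos\theta)$ (since $q_n(1)>0$) and evenness of $p\,\sgn q_n$,
\[
 \int_{-\pi/2}^{\pi/2}V\,\sgn Q_n\,d\theta
 =2\int_0^1(1-t^2)p(t)\sgn q_n(t)\frac{dt}{\sqrt{1-t^2}}
 =\int_{-1}^1p(t)\sgn q_n(t)\sqrt{1-t^2}\,dt=0 .
\]
Odd elements of $\mathcal T_n'$ integrate to zero against the even function $\sgn Q_n$. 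Hence annihilation holds on all of $\{T\in\mathcal T_n':T(0)=0\}$, with complex coefficients by linearity, and your displayed inequality then gives minimality.

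The sup-norm step also needs repair. Monotonicity of $|q_n|$ controls it only beyond the largest zero. Between zeros, $|q|\leq q(1)$ on $[-1,1]$ is not a property of monic polynomials with zeros in $(-1,1)$; consider $t^2-a^2$ with $a$ near $1$. So the bound must come from extremality.

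Your fallback is the right argument once made precise. Translation by $\theta_0$ maps $\mathcal T_n'$ to itself, and $|T|$ is $\pi$-periodic for every $T\in\mathcal T_n'$ because $T(\theta+\pi)=(-1)^nT(\theta)$. Hence $T(\theta)=Q_n(\theta+\theta_0)/Q_n(\theta_0)$ is admissible, with $J(T)=J(Q_n)/|Q_n(\theta_0)|$. This contradicts minimality if $|Q_n(\theta_0)|>1$. Since this translate is not even, you need minimality over all of $\mathcal T_n'$, which your real-part and even-part reduction supplies once the even case is settled as above.
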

\begin{proof}
First take real even $T$. It is a polynomial in $t=\cos\theta$ of
parity $n$. A variation vanishing at $\theta=0$ also vanishes at
$t=-1$, and hence has the form $(1-t^2)p(t)$ with $\deg p\leq n-2$.
After the change of variable $t=\cos\theta$, the first variation of
$J$ at $Q_n$ is zero by \eqref{eq:q3orthog}. Convexity gives
minimality. Taking the real part and then averaging under
$\theta\mapsto-\theta$ reduces an arbitrary complex competitor to
this case without increasing its norm or changing its value at zero.

The modulus of every member of $\mathcal T_n'$ is $\pi$-periodic.
If $|Q_n(\theta_0)|>1$, translating by $\theta_0$ and dividing by
$Q_n(\theta_0)$ produces a competitor with smaller $J$. Thus
$\norm{Q_n}_\infty=1$. The Bernstein bound for trigonometric
polynomials gives $|Q_n(z)|\leq e^{n|\operatorname{Im}z|}$ and
proves the assertion about $F_n$.
\end{proof}

After the substitution $x=n\theta$, the interval in this extremal problem expands to the real line. Its limiting form is the following problem in the space $\PW^1_1$ of entire functions of exponential type at most one which are integrable on $\R$:
\begin{equation}\label{eq:HB}
 M=\min\left\{\norm{F}_{L^1(\R)}:\;F\in\PW^1_1,\quad F(0)=1\right\}.
\end{equation}
The unique minimizer is the H\"ormander--Bernhardsson function $\varphi$, here normalized to type one. We use its existence, uniqueness, and real-zero product as recorded in~\cite{BORSp}:
\begin{equation}\label{eq:product}
 \varphi(z)=\prod_{j=1}^\infty\left(1-\frac{z^2}{a_j^2}\right),
 \qquad 0<a_1<a_2<\cdots.
\end{equation}
Thus $\varphi$ is real and even, and
\begin{equation}\label{eq:imaglower}
 \varphi(iy)\geq1+c_0y^2,\qquad c_0=\sum_{j=1}^\infty a_j^{-2}>0.
\end{equation}
Translation and normalization in the extremal problem give $\norm{\varphi}_\infty=1$. The Bernstein bound then gives $|\varphi(z)|\leq e^{|\operatorname{Im}z|}$.

The constant in the asymptotic formula has the following positive integral representation:
\begin{equation}\label{eq:beta}
 \beta=\frac{M}{2\pi}\int_0^\infty\frac{dy}{\varphi(iy)}.
\end{equation}
Its finiteness follows immediately from~\eqref{eq:imaglower}.

\begin{theorem}\label{thm:S3asymptotic}
The exact constants in Theorem~\ref{thm:S3fixedn} satisfy
\begin{equation}\label{eq:limit3}
 \lim_{n\to\infty}n^2H(n,S^3)=\beta.
\end{equation}
\end{theorem}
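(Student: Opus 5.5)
The plan is to reduce $H(n,S^3)$ to a single positive integral of the form in~\eqref{eq:beta} for every $n$, and then pass to the limit by a normal-families argument built on Lemma~\ref{lem:point} and uniqueness in~\eqref{eq:HB}.

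\emph{Step 1: a reciprocal formula for all $n$.} First I would extend Proposition~\ref{prop:S3reciprocal} to every $n$, in the form
\[
 H(n,S^3)=\frac{J(Q_n)}{2\pi}\int_0^\infty\frac{d\eta}{Q_n(i\eta)},
\]
with $J$ and $Q_n$ as in Lemma~\ref{lem:point}. For even $n=2m$ this is exactly~\eqref{eq:S3finiteReciprocal}. Indeed, $\ell_m=\int_0^\pi|Q_{2m}(t/2)|\,dt=J(Q_{2m})$, by evenness and $\pi$-periodicity of $|Q_{2m}|$, and the substitution $\eta=2y$ turns $\int_0^\infty d\eta/P_m(i\eta)$ into $2\int_0^\infty dy/Q_{2m}(iy)$.

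For odd $n$ I would repeat the proof of Proposition~\ref{prop:S3reciprocal} directly in the variable $\theta$. The steps are:
\begin{itemize}
\item use~\eqref{eq:q3orthog} to write $1/q_n'(t_j)$ in terms of $\varepsilon_j$, $\sin^2\theta_j$ and $J(Q_n)$, by testing against the Lagrange basis;
\item expand $1/Q_n(i\eta)$ in simple fractions;
\item integrate each fraction with $\int_0^\infty d\eta/(\cosh\eta-\cos\theta)=(\pi-\theta)/\sin\theta$;
\item compare the resulting sum with $\int G_3\sgn q_n\,d\mu_3$ from~\eqref{eq:sharp-sign} and~\eqref{eq:G3}, integrating by parts over the sign intervals.
\end{itemize}
If the odd case resists this, a fallback is the Stieltjes remainder~\eqref{eq:remainder3} with the rescaling $a=\cosh(y/n)$, which yields the same limit directly.

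After the substitution $\eta=Y/n$ the formula becomes
\[
 n^2H(n,S^3)=\frac{nJ(Q_n)}{2\pi}\int_0^\infty\frac{dY}{F_n(iY)} .
\]
So it suffices to prove two limits:
\[
 nJ(Q_n)\to M \qquad\text{and}\qquad \int_0^\infty\frac{dY}{F_n(iY)}\to\int_0^\infty\frac{dY}{\varphi(iY)} .
\]

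\emph{Step 2: $nJ(Q_n)\to M$.} Note that $nJ(Q_n)=\int_{-n\pi/2}^{n\pi/2}|F_n(x)|\,dx$.

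For the lower bound I would use Lemma~\ref{lem:point}. Since $|F_n(z)|\le e^{|\operatorname{Im}z|}$, every subsequence has a further subsequence converging locally uniformly to an entire $F$ of exponential type at most one, with $F(0)=1$. Fatou's lemma then gives $\|F\|_{L^1}\le\liminf nJ(Q_n)$. So $F\in\PW^1_1$, hence $\liminf nJ(Q_n)\ge M$.

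For the upper bound I would build competitors in $\mathcal T_n'$ from $\varphi$. Take $\varphi_\varepsilon(x)=\varphi((1-\varepsilon)x)\,k_\varepsilon(x)$, where $k_\varepsilon$ is a Fejér-type factor of type $\varepsilon$ with $k_\varepsilon(0)=1$, so that $\varphi_\varepsilon$ has type less than one and decays fast. Periodize $\theta\mapsto\varphi_\varepsilon(n\theta)$ with period $\pi$. By Poisson summation this is a trigonometric polynomial with frequencies in $2\mathbb Z$ of size at most $n$; the parity of frequencies must be matched to $\mathcal T_n'$, and for odd $n$ I would multiply by a suitable $e^{i\theta}$-adjusted factor. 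Its value at $0$ is $1+o(1)$, and its $J$-norm is at most $(1+o(1))M/n$ as $\varepsilon\to0$.

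By uniqueness of the H\"ormander--Bernhardsson minimizer, every subsequential limit $F$ equals $\varphi$. Hence $F_n\to\varphi$ locally uniformly.

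\emph{Step 3: convergence of the reciprocal integrals.} Pointwise, $1/F_n(iY)\to1/\varphi(iY)$ by Step 2, so I need a uniform integrable majorant. Write
\[
 Q_n(i\eta)=\prod_j\frac{\cosh\eta-t_j}{1-t_j}.
\]
Every factor is at least $1$: for $t_j\ge0$ this is clear, for $t_j<0$ it follows from $\cosh\eta\ge1$, and when $t_j=0$ the factor is $\cosh\eta$. Keep only the factor for the largest zero $t_1=\cos(x_1/n)$. Locally uniform convergence $F_n\to\varphi$ together with Hurwitz's theorem gives $x_1\to a_1$. Then
\[
 F_n(iY)\ge\frac{\cosh(Y/n)-\cos(x_1/n)}{1-\cos(x_1/n)}\ge1+cY^2
\]
uniformly in $n$, for some $c>0$. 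Dominated convergence then gives the second limit, and with it~\eqref{eq:limit3}.

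I expect the main obstacle to be Step 1 for odd $n$, namely getting the exact reciprocal identity with the correct normalization. The upper-bound construction in Step 2, with its parity bookkeeping in $\mathcal T_n'$, is the secondary difficulty.
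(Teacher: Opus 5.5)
Your proof is correct, and its main line is the paper's. Both use the reciprocal formula and prove $F_n\to\varphi$ with $nJ(Q_n)\to M$ from periodized competitors, Fatou, and uniqueness. Both then pass to the limit by dominated convergence, using a quadratic lower bound for $F_n(iY)$.

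The genuine difference is the treatment of odd $n$. The paper proves the limit only along $n=2m$, where $t=2\theta$ makes $Q_{2m}$ a cosine polynomial. It then handles odd degrees in one line: $H(n,S^3)$ is a best $L^1(\mu_3)$ error over nested polynomial spaces, hence nonincreasing, so $H(2m+2,S^3)\le H(2m+1,S^3)\le H(2m,S^3)$.

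You instead prove the reciprocal identity for every $n$, and it does hold. Let $\theta_1<\cdots<\theta_n$ be the zeros of $Q_n$ in $(0,\pi)$, $t_j=\cos\theta_j$, $\varepsilon_j=(-1)^{j-1}$, and $A=q_n/q_n(1)$.
\begin{itemize}
\item Apply $\int_{-\pi/2}^{\pi/2}T\,\sgn Q_n\,d\theta=J(Q_n)T(0)$ to $\cos k\theta$ with $k\equiv n\pmod 2$. This gives $\sum_j\varepsilon_j\sin\theta_j\,p(t_j)=\tfrac14J(Q_n)\bigl(p(1)+(-1)^{n-1}p(-1)\bigr)$ for $\deg p<n$.
\item The Lagrange basis then yields $1/A'(t_j)=2\varepsilon_j\sin^3\theta_j/J(Q_n)$. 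Note the cube, not $\sin^2\theta_j$ as your sketch says.
\item Both $\frac{J(Q_n)}{2\pi}\int_0^\infty d\eta/Q_n(i\eta)$ and $H(n,S^3)$ reduce to $\frac1\pi\sum_j\varepsilon_j(\pi-\theta_j)\sin^2\theta_j$.
\item On the $H$ side, integrate $(\pi-\theta)\sin2\theta$ by parts over the sign intervals. The only moment needed is $\int_0^\pi\sin^2\theta\,\sgn q_n(\cos\theta)\,d\theta=0$.
\end{itemize}
For $n=1$ and $n=3$ this returns $1/2$ and $2^{-2/3}-1/2$, matching the paper's values.

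Your route gives an exact single positive integral for $n^2H(n,S^3)$ at every $n$ and convergence of the whole sequence directly. The paper's route is more economical, since the even case plus monotonicity suffices.

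The remaining differences are cosmetic:
\begin{itemize}
\item For odd $n$ you multiply the $\pi$-periodization by $e^{i\theta}$, which preserves $T(0)$ and $J$. The paper uses the twisted sum $\sum_j(-1)^{nj}F(n(\theta+j\pi))$ instead.
\item Your majorant $F_n(iY)\ge1+Y^2/x_1^2$, from the first zero with $x_1\to a_1$ by Hurwitz, replaces the paper's nonnegative Taylor coefficients in $y^2$ together with $-F_{2m}''(0)/2\to c_0$.
\end{itemize}

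One small slip: with $k_\varepsilon$ of type $\varepsilon$, the function $\varphi((1-\varepsilon)x)k_\varepsilon(x)$ has type exactly one, not less than one. This is harmless, because its Fourier transform is continuous and vanishes at $\pm1$, so the periodized frequencies still satisfy $|2k|<n$. Taking $k_\varepsilon$ of type $\varepsilon/2$ makes your stated claim literally true.
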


Numerical evaluation gives $\beta\approx2.850999422031$. The calculation is described in Section~\ref{app:beta}; this approximation is not used in the proof.

We first prove convergence of the rescaled extremal functions, including control of their $L^1$ mass outside bounded intervals. Local convergence alone would not justify passage to the limit in the integral for the constant.

\begin{proposition}\label{prop:convergence}
The functions $F_n$ converge to $\varphi$ locally uniformly on $\C$.
In addition,
\begin{equation}\label{eq:mass}
 m_n:=\int_{-\pi n/2}^{\pi n/2}|F_n(x)|\,dx\longrightarrow M,
\end{equation}
and their mass on these expanding intervals is tight:
\begin{equation}\label{eq:tight}
 \lim_{R\to\infty}\limsup_{n\to\infty}
 \int_{R<|x|\leq\pi n/2}|F_n(x)|\,dx=0.
\end{equation}
\end{proposition}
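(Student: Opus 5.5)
The plan is a standard compactness--uniqueness argument: compare the finite extremal problems of Lemma~\ref{lem:point} with the limiting problem~\eqref{eq:HB}, in two directions. First I would prove $\limsup m_n\le M$ by transplanting $\varphi$ to the finite problems. Then I would prove $\liminf m_n\ge M$ through normal-family limits of $F_n$. Uniqueness of the H\"ormander--Bernhardsson minimizer will then force convergence of the whole sequence, and tightness~\eqref{eq:tight} will follow from the matching of masses.

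\emph{Upper bound.} For $\varepsilon>0$, take $\psi_\varepsilon(z)=\varphi((1-\varepsilon)z)\,(\sin(\varepsilon z/2)/(\varepsilon z/2))^{2}$. This function is real and even, has type at most one, satisfies $\psi_\varepsilon(0)=1$, and decays like $|x|^{-2}$ on $\R$. Its $L^1$ norm tends to $M$ as $\varepsilon\to0$. Next, periodize it at scale $n$: put $T(\theta)=\sum_{k\in\mathbb Z}\psi_\varepsilon(n(\theta+k\pi))$. By Poisson summation, the Fourier transform of $\psi_\varepsilon(n\,\cdot)$ is supported in $[-n,n]$, and $\pi$-periodicity leaves only frequencies in $2\mathbb Z$. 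If $n$ is odd, I would instead use the antiperiodic sum $\sum(-1)^k\psi_\varepsilon(n(\theta+k\pi))$, which leaves only odd frequencies. Either way $T\in\mathcal T_n'$. The endpoint frequencies $\pm n$ need a little care, but they can be handled by shrinking the type slightly. We have $T(0)=1+O(n^{-2})$, and after renormalization the value at zero is exactly one. Finally, $\int_{-\pi/2}^{\pi/2}|T|\,d\theta\le n^{-1}\norm{\psi_\varepsilon}_{L^1}$. Since $m_n=n\,J(Q_n)$, Lemma~\ref{lem:point} gives $\limsup m_n\le\norm{\psi_\varepsilon}_1$, and letting $\varepsilon\to0$ gives $\limsup m_n\le M$.

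\emph{Lower bound and identification.} By Lemma~\ref{lem:point}, $|F_n(z)|\le e^{|\operatorname{Im}z|}$, so $(F_n)$ is a normal family. Any subsequential limit $F$ is entire, has exponential type at most one, and satisfies $F(0)=1$. By Fatou on each fixed interval $[-R,R]$, $\int_{-R}^R|F|\le\liminf m_n$, so $F\in\PW^1_1$ and $M\le\norm{F}_1\le\liminf m_n\le M$. Hence $\norm F_1=M$, and uniqueness in~\eqref{eq:HB} gives $F=\varphi$. Every subsequence therefore has a further subsequence converging to $\varphi$, so the full sequence converges locally uniformly, and~\eqref{eq:mass} holds.

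\emph{Tightness.} Locally uniform convergence gives $\int_{-R}^R|F_n|\to\int_{-R}^R|\varphi|$. Subtracting this from~\eqref{eq:mass}, the tail $\int_{R<|x|\le\pi n/2}|F_n|$ tends to $M-\int_{-R}^R|\varphi|$, which vanishes as $R\to\infty$. This proves~\eqref{eq:tight}.

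The main obstacle is the upper-bound construction: the competitor must lie exactly in the parity-restricted space $\mathcal T_n'$ with the correct endpoint frequencies. The Poisson-summation bookkeeping and the odd-$n$ antiperiodic twist are where errors can creep in.
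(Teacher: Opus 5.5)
Your proposal is correct and follows essentially the paper's argument: a periodized competitor $\sum_j(\pm1)^{j}\psi(n(\theta+j\pi))$ bounds $\limsup m_n\le M$; normal families, Fatou and uniqueness of $\varphi$ then give the lower bound and identify the limit; tightness follows by subtraction. The differences are cosmetic. You damp $\varphi$ with a Fej\'er factor, whose $|x|^{-2}$ decay suffices for absolute convergence and Poisson summation, where the paper uses a Schwartz factor of type strictly less than one. Your concern about the endpoint frequencies is unnecessary, since $\pm n$ already belong to $\mathcal T_n'$.
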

\begin{proof}
We first construct competitors. Let $F$ be a real even Schwartz
function of exponential type strictly less than one, with $F(0)=1$.
Its periodization
\[
 T_n(\theta)=\sum_{j\in\mathbb Z}(-1)^{nj}
 F\bigl(n(\theta+j\pi)\bigr)
\]
belongs to $\mathcal T_n'$. Indeed, Poisson summation restricts its
frequencies to integers of parity $n$, and the support of $\widehat F$
restricts their absolute values to less than $n$. Also,
\[
 T_n(0)\longrightarrow1,\qquad
 J(T_n)\leq\frac1n\norm{F}_1.
\]
Applying Lemma~\ref{lem:point} to $T_n/T_n(0)$ for all sufficiently large $n$ gives $\limsup m_n\leq\norm{F}_1$.

Such competitors can approximate $\varphi$ in $L^1$. For example, choose a real even Schwartz function $\psi$ of type at most one with $\psi(0)=1$ and $|\psi|\leq1$, and use $\varphi((1-2\varepsilon)x)\psi(\varepsilon x)$ for $0<\varepsilon<1/2$. Dilation continuity in $L^1$, followed by dominated convergence for the multiplier, proves convergence to $\varphi$. Hence $\limsup m_n\leq M$.

The bound in Lemma~\ref{lem:point} makes $(F_n)$ a normal family. Choose a subsequence along which $m_n$ tends to its lower limit, and then a locally uniformly convergent further subsequence. Its limit $F_*$ has type at most one and $F_*(0)=1$. Fatou's lemma on expanding real intervals gives $\norm{F_*}_1\leq\liminf m_n$, whereas~\eqref{eq:HB} gives $M\leq\norm{F_*}_1$. Together with the upper bound already proved, this shows that $m_n\to M$. For any locally uniform subsequential limit the same argument now gives norm $M$, so uniqueness forces that limit to be $\varphi$. This proves convergence of the full sequence. Subtracting the integral on $[-R,R]$ from~\eqref{eq:mass}, and then letting $R$ grow, proves~\eqref{eq:tight}.
\end{proof}

\begin{proof}[Proof of Theorem~\ref{thm:S3asymptotic}]
It is enough first to consider $n=2m$. The definitions give $P_m(t)=Q_{2m}(t/2)$, $\ell_m=J(Q_{2m})$, and $F_{2m}(z)=P_m(z/m)$. Rescaling~\eqref{eq:S3finiteReciprocal} therefore gives
\[
 (2m)^2H(2m,S^3)=\frac{m_{2m}}{2\pi}
 \int_0^\infty\frac{dy}{F_{2m}(iy)}.
\]
Proposition~\ref{prop:convergence} gives $m_{2m}\to M$ and local convergence of the denominator to $\varphi(iy)$. To pass to the limit in the integral, note that the opposite pairs of roots of $q_{2m}$ give
\[
 F_{2m}(iy)=\prod_{j=1}^m
 \left(1+\frac{\sinh^2(y/(2m))}{1-r_j^2}\right),
\]
where $r_j$ are its positive roots. All coefficients in the power series in $y^2$ are nonnegative. Since $-F_{2m}''(0)/2\to c_0>0$, there is a fixed $c>0$ such that $F_{2m}(iy)\geq1+cy^2$ for all sufficiently large $m$ and all $y\geq0$. Dominated convergence proves the limit along the even degrees.

Finally, $H(n,S^3)$ is nonincreasing in $n$, by its best-kernel-approximation characterization. Thus $H(2m+2,S^3)\leq H(2m+1,S^3)\leq H(2m,S^3)$, and multiplying by $(2m+1)^2$ gives the same limit along the odd degrees.
\end{proof}

For the uniform bound below, we shall also need the equivalent form
\begin{equation}\label{eq:betaDouble}
 \beta=\frac2\pi\int_0^\infty\int_0^\infty
 \frac{x^2|\varphi(x)|}{(x^2+y^2)\varphi(iy)}\,dy\,dx.
\end{equation}
We verify it directly from the extremal property, without another limiting argument. Put $s=\sgn\varphi$. First variation in~\eqref{eq:HB} gives
\begin{equation}\label{eq:reproduce}
 \int_\R F(x)s(x)\,dx=M F(0),\qquad F\in\PW^1_1.
\end{equation}
Indeed, vary $\varphi$ by $F-F(0)\varphi$, first for real functions and then by complex linearity. For $y>0$, write
\[
 A(y)=\int_\R\frac{|\varphi(x)|}{x^2+y^2}\,dx,
 \qquad J_s(y)=\int_\R\frac{s(x)}{x^2+y^2}\,dx,
 \qquad V(y)=\varphi(iy).
\]
Apply~\eqref{eq:reproduce} to
\[
 \frac{\varphi(z)-V(y)}{z^2+y^2}
 \quad\hbox{and}\quad
 \frac{z\varphi'(z)-yV'(y)}{z^2+y^2}.
\]
The poles are removable, and both functions belong to $\PW^1_1$; integrability of the second follows also from Bernstein's inequality for $\varphi'$. Integration by parts in the second identity gives
\[
 A-VJ_s=\frac{M(1-V)}{y^2},\qquad
 A+yA'-yV'J_s=-\frac{MV'}y.
\]
Consequently, if $D(y)=(M-y^2J_s(y))/\pi$, the first identity and its derivative give
\[
 D(y)=\frac1{\pi V(y)}\int_\R
 \frac{x^2|\varphi(x)|}{x^2+y^2}\,dx,
 \qquad yD'(y)-D(y)=-\frac{M}{\pi V(y)}.
\]
In particular, $0\leq D(y)\leq M/(\pi V(y))$. By~\eqref{eq:imaglower}, $yD(y)$ tends to zero at both endpoints. Integrating the differential identity now yields
\begin{equation}\label{eq:beta1D}
 \int_0^\infty D(y)\,dy
 =\frac{M}{2\pi}\int_0^\infty\frac{dy}{V(y)}=\beta.
\end{equation}
The positive expression for $D$ and Tonelli's theorem prove~\eqref{eq:betaDouble}.

\section{A uniform Bohr bound on \texorpdfstring{$S^3$}{S3}}\label{sec:S3uniform}

The limit shows that any coefficient valid for all $n$ must be at least $\beta$, but it does not by itself show that $\beta$ works at every finite $n$. We now prove that no larger coefficient is needed.

\begin{theorem}\label{thm:main}
For every positive integer $n$,
\begin{equation}\label{eq:H23}
 n^2H(n,S^3)\leq\beta.
\end{equation}
Consequently $H(S^3)=\beta$.
\end{theorem}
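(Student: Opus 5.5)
The plan is to prove \eqref{eq:H23} by exhibiting, for each fixed $n$, an explicit admissible polynomial whose $L^1(\mu_3)$ error against $G_3$ is at most $\beta/n^2$. Since $H(n,S^3)$ is, by Theorem~\ref{thm:S3fixedn}, a \emph{minimum} over polynomials of degree less than $n$, any competitor gives an upper bound. The identity $H(S^3)=\beta$ then follows from \eqref{eq:H23} together with the limit in Theorem~\ref{thm:S3asymptotic}.

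The competitors will again be interpolants, but at nodes transferred from the H\"ormander--Bernhardsson function rather than at the zeros of $q_n$. The starting observation is that the proof of Theorem~\ref{thm:S3fixedn} used nothing about $q_n$ except that it is monic with real zeros in $(-1,1)$. For any such monic $q$ of degree $n$, the Stieltjes representation \eqref{eq:Stieltjes} and the resolvent remainder give
\[
 G_3(t)-p_q(t)=q(t)\int_1^\infty\frac{a\,da}{2\sqrt{a^2-1}\,(a-t)\,q(a)},
\]
where $p_q$ is the interpolant at the zeros of $q$. Since the integral factor is positive, we obtain
\[
 \int_{-1}^1|G_3-p_q|\,d\mu_3=\int_1^\infty\frac{a\,da}{2\sqrt{a^2-1}\,q(a)}\int_{-1}^1\frac{|q(t)|}{a-t}\,d\mu_3(t).
\]
Writing $a=\cosh(y/n)$ and $t=\cos(x/n)$, this becomes a double integral in $(x,y)$. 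It is the finite analogue of the representation \eqref{eq:betaDouble}, with $|\varphi(x)|/\varphi(iy)$ replaced by $|Q(x/n)|/Q(iy/n)$, where $Q(\theta)=q(\cos\theta)/q(1)$. The kernel
\[
 \frac{x^2}{x^2+y^2}
\]
is replaced by its trigonometric counterpart, which carries the extra factors $\sin^2(x/n)$ from the weight and $\sinh(y/n)$ from the Jacobian.

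The concrete choice of nodes is as follows. Let $\pm a_j$ be the zeros of $\varphi$ from \eqref{eq:product}. For odd $n$ we also include the node $0$, and we place nodes at $t=\cos(a_j/n)$ for the first $\lfloor n/2\rfloor$ values of $j$. Because the zeros satisfy $a_j\approx\pi(j-\tfrac12)$ up to bounded error, these nodes lie in $(-1,1)$. The resulting $Q$ is then an exact truncated product
\[
 Q(\theta)=\prod_j\frac{\cos\theta-\cos(a_j/n)}{1-\cos(a_j/n)}.
\]
The remaining work is a comparison of this truncated product, and of the trigonometric kernel, with their entire limits, done factor by factor. The elementary facts I intend to use are $\cosh u-1\geq u^2/2$, $1-\cos u\leq u^2/2$, and $\sin^2 u\leq u^2$ in the kernel. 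These should give $Q(iy/n)\geq\varphi(iy)$ in the denominator, with the missing factors of $\varphi(iy)$ all exceeding one, which only helps. They should also give pointwise control of $|Q(x/n)|$ by $|\varphi(x)|$ on the period $|x|\leq\pi n/2$. After these comparisons, the finite double integral should be bounded by the right-hand side of \eqref{eq:betaDouble} divided by $n^2$.

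The main obstacle is the numerator. One has to bound $|Q(x/n)|$ by $|\varphi(x)|$ on the real period (or at least in integrated form against the kernel), and the truncation removes the infinitely many high zeros. Those discarded factors $|1-x^2/a_j^2|$ are not bounded by one for large $|x|$, so a termwise comparison fails near $|x|\sim\pi n/2$. My first attempt will pair the discarded factors with the trigonometric distortion of the retained ones, in the spirit of the exact product for $\sin$: namely
\[
 \frac{\sin x}{x}=\prod_j\Bigl(1-\frac{x^2}{\pi^2j^2}\Bigr)\quad\text{versus}\quad\frac{\sin x}{n\sin(x/n)}.
\]
If the pointwise comparison resists, the fallback is to use the reproducing identity \eqref{eq:reproduce}, together with its finite counterpart \eqref{eq:S3cosineReproduction}, to rewrite the numerator integral in the one-dimensional form $\int D$ of \eqref{eq:beta1D}. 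After that rewriting only the positive denominators $Q(iy/n)$ and $\varphi(iy)$ need to be compared, and that comparison is monotone and easy.
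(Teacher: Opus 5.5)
Your reduction is sound: $H(n,S^3)$ is a minimum over polynomials of degree less than $n$, and your remainder formula for an arbitrary monic $q$ with simple zeros in $(-1,1)$ is correct. Also, $H(S^3)=\beta$ does follow from \eqref{eq:H23} together with Theorem~\ref{thm:S3asymptotic}. The gap is the inequality $n^2\int_{-1}^1|G_3-p_q|\,d\mu_3\leq\beta$ itself. The proposal never establishes it, and the comparisons you list do not give it.

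As displayed, $Q$ is a polynomial of degree $\lfloor n/2\rfloor$ in $\cos\theta$, so it uses only about half the nodes. It vanishes only at the points $\cos(a_j/n)$, and $|Q(\pi)|=\prod_j\cot^2\bigl(a_j/(2n)\bigr)$ grows exponentially with $n$, so that competitor is not even $O(n^{-2})$. The intended object is presumably $Q(\theta)=\cos^{\epsilon}\theta\prod_j\bigl(1-\sin^2\theta/\sin^2(a_j/n)\bigr)$, with nodes $\pm\cos(a_j/n)$ and $\epsilon\in\{0,1\}$ the parity of $n$. Note also that $a_j=\pi(j+\tfrac12)+o(1)$, not $\pi(j-\tfrac12)$. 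So for large even $n$ the zero $a_{n/2}$ already lies beyond the half-period $|x|\leq\pi n/2$ on which you want to compare $|Q(x/n)|$ with $|\varphi(x)|$.

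Even for this corrected $Q$, pairing $t$ with $-t$ gives exactly
\[
 n^2\int_{-1}^1|G_3-p_q|\,d\mu_3
 =\frac2\pi\int_0^{\pi n/2}\!\!\int_0^\infty
 \frac{\cosh^2(y/n)\sin^2(x/n)}{\sinh^2(y/n)+\sin^2(x/n)}\,
 \frac{|Q(x/n)|}{Q(iy/n)}\,dy\,dx .
\]
This kernel tends to $\sin^2(x/n)$ as $y\to\infty$, so it is not dominated by $x^2/(x^2+y^2)$. The factor $\cosh^2(y/n)$ would have to be absorbed by the growth of $Q(iy/n)$, and that growth is not under control either. Factorwise you get $Q(iy/n)\geq\prod_{j\leq n/2}(1+y^2/a_j^2)$. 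But $\varphi(iy)$ contains the further factors $1+y^2/a_j^2>1$ for $j>n/2$, which make $\varphi(iy)$ \emph{larger}. They therefore work against $Q(iy/n)\geq\varphi(iy)$, not for it. Finally, since $n^2H(n,S^3)\to\beta$, the room available to any non-extremal competitor shrinks to zero. Nothing in the plan shows that the loss from using transferred rather than extremal nodes fits into it.

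The fallback does not close the gap. Identity \eqref{eq:S3cosineReproduction} is a property of the true extremal $P_m$ built from $q_{2m}$. For your competitor, $\sgn Q$ has no vanishing moments, so the double integral cannot be collapsed to the one-dimensional form \eqref{eq:beta1D}. If you use $q_{2m}$ instead, you arrive at Proposition~\ref{prop:S3reciprocal}. You would then still need uniform comparisons such as $m_n\leq M$ and $F_n(iy)\geq\varphi(iy)$ for all $n$ and $y$. These require quantitative information on the zeros of the finite extremals relative to those of $\varphi$, whereas only convergence (Proposition~\ref{prop:convergence}) is available. Odd $n$ would also need separate treatment, because monotonicity loses the factor $(2m+1)^2/(2m)^2$.

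The missing idea is to avoid comparing finite and limiting extremals altogether. The paper does this by transference to $\R^3$. The kernel $\mathcal K$ of \eqref{eq:K} has $\norm{\mathcal K}_1=\beta$ by \eqref{eq:Berror} and \eqref{eq:betaDouble}, and multiplier $|\xi|^{-2}$ for $|\xi|\geq1$. A zonal source lifts to $h(u)=\frac{\sin r\cos r}{r}\gamma(\cos r)$, which is bounded by $\norm{g}_\infty$ and has spectrum in $|\xi|\geq n$, and $f(e)=\int\mathcal K_nh$.

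Read as a competitor in your primal problem, this is explicit. Fold $4\pi r\,\mathcal K_n(r)\sin r\cos r$ from $(0,\infty)$ onto $(0,\pi)$ along $\cos r=\cos\theta$. The result is $(G_3-p)(\cos\theta)$ times the density of $\mu_3$ in $\theta$, for some polynomial $p$ of degree less than $n$. The triangle inequality under folding, together with $|\sin r\cos r|\leq r$, bounds $\int|G_3-p|\,d\mu_3$ by $\norm{\mathcal K_n}_1=\beta/n^2$. No interpolation nodes and no pointwise comparison of finite products with $\varphi$ are needed.
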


We shall construct an integrable kernel on $\R^3$ whose Fourier multiplier agrees with the inverse Laplacian for frequencies of magnitude at least one, and transfer the resulting bound to $S^3$. The three-dimensional Euclidean space is useful because its radial waves $\sin(\lambda r)/r$ can be related exactly to the zonal spherical harmonics. To obtain an integrable kernel, we subtract an entire approximant from the Newton kernel. Define
\begin{equation}\label{eq:B}
 B(z)=\frac2\pi\int_0^\infty
 \frac{1-\varphi(z)/\varphi(iy)}{z^2+y^2}\,dy.
\end{equation}
This construction interpolates $1/|x|$ at the real zeros of
$\varphi$ and gives an error with a prescribed sign.

\begin{lemma}\label{lem:B}
The function $B$ is even, entire, and of exponential type at most
one. For $x>0$,
\begin{equation}\label{eq:Berror}
 \frac1x-B(x)=\frac2\pi\varphi(x)
 \int_0^\infty\frac{dy}{(x^2+y^2)\varphi(iy)}.
\end{equation}
The radial function
\begin{equation}\label{eq:K}
 \mathcal K(u)=\frac1{4\pi}\left(\frac1{|u|}-B(|u|)\right),
 \qquad u\in\R^3,
\end{equation}
belongs to $L^1(\R^3)$ and satisfies
\begin{equation}\label{eq:Kfacts}
 \norm{\mathcal K}_1=\beta,\qquad
 \widehat{\mathcal K}(\xi)=|\xi|^{-2}\quad\text{for }|\xi|\geq1.
\end{equation}
Here $\widehat h(\xi)=\int h(u)e^{-iu\cdot\xi}\,du$.
\end{lemma}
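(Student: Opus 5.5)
The plan is to read off each claim from the integral definition~\eqref{eq:B}, using three inputs: the growth bound~\eqref{eq:imaglower}, the Bernstein bound $|\varphi(z)|\le e^{|\operatorname{Im}z|}$, and the double-integral form~\eqref{eq:betaDouble} of $\beta$.

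\emph{Entireness, evenness and type.} For fixed $y>0$ the integrand $(1-\varphi(z)/\varphi(iy))/(z^2+y^2)$ is entire in $z$, because the numerator vanishes at $z=\pm iy$ ($\varphi$ is even). It is also even in $z$. For $y\ge1$ and $z$ in a compact set, the integrand is $O(y^{-2})$, since $|\varphi(z)|/\varphi(iy)\le e^{|\operatorname{Im}z|}$. For $y$ in a bounded range I would write it as a divided difference of the entire function $\varphi(\zeta)$ in $\zeta^2$, which makes it locally bounded uniformly in $y$. Locally uniform convergence of the $y$-integral then shows that $B$ is entire and even.

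For the type, I would split the $y$-integral at $y=|z|+1$. On $y>|z|+1$ the integrand is bounded by $(1+e^{|\operatorname{Im}z|})/(y^2-|z|^2)$. On the bounded range the divided-difference bound is at most polynomial in $|z|$ times $e^{|\operatorname{Im}z|}$, by the maximum principle on a disc of radius one. Together these give $|B(z)|\le C(1+|z|)^Ne^{|\operatorname{Im}z|}$, so $B$ has type at most one. On the real line $B$ is bounded for large $x$, by the error formula below and $|\varphi|\le1$.

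\emph{Error formula and the norm.} The identity~\eqref{eq:Berror} is immediate from $\frac2\pi\int_0^\infty dy/(x^2+y^2)=1/x$. The right-hand side of~\eqref{eq:Berror} has the sign of $\varphi(x)$. Hence, in polar coordinates,
\[
\norm{\mathcal K}_1=\int_0^\infty r^2\Bigl|\frac1r-B(r)\Bigr|\,dr
=\frac2\pi\int_0^\infty\!\!\int_0^\infty\frac{x^2|\varphi(x)|}{(x^2+y^2)\varphi(iy)}\,dy\,dx ,
\]
where Tonelli applies to the nonnegative integrand. By~\eqref{eq:betaDouble} this equals $\beta$. In particular the integral is finite, so $\mathcal K\in L^1(\R^3)$.

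\emph{Fourier transform.} I expect this step to be the main obstacle, since $r\,\mathcal K(r)$ need not be integrable and a direct radial sine-transform computation is delicate. I would argue instead with tempered distributions.

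1. $1/(4\pi|u|)$ has Fourier transform $|\xi|^{-2}$.
2. Since $B$ is even, $B(z)=E(z^2)$ with $E$ entire. Hence $B(|u|)=E(u_1^2+u_2^2+u_3^2)$ extends to an entire function on $\C^3$.
3. From $|\operatorname{Im}\sqrt{u\cdot u}|\le|\operatorname{Im}u|$ for $u\in\C^3$ and the growth bound above, $|E(u\cdot u)|\le C(1+|u|)^Ne^{|\operatorname{Im}u|}$.
4. $B(|u|)$ is polynomially bounded on $\R^3$. The Paley--Wiener--Schwartz theorem therefore puts the support of its Fourier transform in the closed unit ball.
5. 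Consequently $\widehat{\mathcal K}=|\xi|^{-2}$ as distributions on $|\xi|>1$.
6. Because $\mathcal K\in L^1$, $\widehat{\mathcal K}$ is continuous, so the identity holds pointwise on $|\xi|>1$ and extends by continuity to $|\xi|=1$.

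This proves~\eqref{eq:Kfacts}.
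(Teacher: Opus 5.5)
Your proof is correct, and it differs from the paper's only at the Fourier-support step. Entireness, evenness and the growth bound follow the paper's argument: divided differences near $z=\pm iy$ and crude bounds elsewhere. You split at $y=|z|+1$ and use the maximum principle in $w=z^2$, where the paper splits at $2|z|$ and uses Cauchy estimates for $\varphi'$. Your weaker factor $(1+|z|)^N$, in place of the paper's $(1+|z|)$, is all that Paley--Wiener--Schwartz needs. One small point: your $O(y^{-2})$ bound via $|\varphi(z)|/\varphi(iy)\le e^{|\operatorname{Im}z|}$ only controls $y$ large compared with the compact set. For intermediate $y$ it is the divided-difference bound that gives local boundedness, which your bounded-range argument supplies. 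The error formula and the identity $\norm{\mathcal K}_1=\beta$, via polar coordinates, the sign of $\varphi$ and \eqref{eq:betaDouble}, are exactly as in the paper.

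For the spectral step, the paper applies the one-variable Paley--Wiener theorem to get $\supp\widehat B\subset[-1,1]$. It then transfers this to $\R^3$ through the radial identity $\widehat{B(|\cdot|)}_{\R^3}(\rho)=-\frac{2\pi}{\rho}\frac{d}{d\rho}\widehat B_{\R}(\rho)$, asserted to remain valid for tempered distributions. You instead extend $B(|u|)=E(u\cdot u)$ holomorphically to $\C^3$ and apply the several-variable Paley--Wiener--Schwartz theorem directly. The inequality $|\operatorname{Im}\sqrt{u\cdot u}|\le|\operatorname{Im}u|$ is true. If $u=a+ib$ and $\sqrt{u\cdot u}=s+it$, then $s^2-t^2=|a|^2-|b|^2$ and $st=a\cdot b$. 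So $|t|>|b|$ would force $|s|>|a|$ and $|st|>|a||b|\ge|a\cdot b|$, a contradiction. Together with $|u\cdot u|\le|u|^2$, this gives the required growth bound on $\C^3$.

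Your route avoids justifying the radial identity in the distributional setting, which the paper leaves implicit. It is also dimension-free: the same argument places the spectrum of the radial lift of $B$ in the unit ball of any $\R^k$, where the radial transform is no longer an elementary derivative of the cosine transform. The paper's route needs only the one-variable theorem and exploits the special structure of $\R^3$. That structure matches the radial waves $\sin(\lambda r)/r$ used in the subsequent transfer argument.
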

\begin{proof}
For fixed $y$, the poles at $z=\pm iy$ in \eqref{eq:B} are removable
because $\varphi$ is even. This remains locally uniform at $z=y=0$:
write $\varphi(z)=H(z^2)$ and use the divided difference of $H$.
For large $y$, the integrand is locally bounded by a constant times
$y^{-2}$. Thus the integral defines an entire even function.

Here is also a growth bound, to make the spectral support explicit.
For $|z|=R\geq2$, split the integral at $2R$. The tail is bounded
by $C e^{|\operatorname{Im}z|}/R$. In $0\leq y\leq2R$, away from
unit neighborhoods of $z=iy$ and $z=-iy$, the two denominator
factors have modulus at least one, giving a bound
$CR e^{|\operatorname{Im}z|}$. In either neighborhood cancel the
nearby factor by the divided-difference formula. Cauchy's estimate
and $|\varphi(z)|\leq e^{|\operatorname{Im}z|}$ bound the resulting
derivative by $Ce^{|\operatorname{Im}z|}$, while the other factor
has modulus comparable to $R$. These bounds prove
\[
 |B(z)|\leq C(1+|z|)e^{|\operatorname{Im}z|}.
\]
The constants here are independent of $z$.

Since $(2/\pi)\int_0^\infty(x^2+y^2)^{-1}\,dy=1/x$,
formula \eqref{eq:Berror} follows. Polar coordinates and~\eqref{eq:betaDouble} then give
\[
 \norm{\mathcal K}_1=\int_0^\infty x^2\left|x^{-1}-B(x)\right|\,dx
 =\beta.
\]
The one-dimensional Paley--Wiener theorem for distributions implies
$\supp\widehat B\subset[-1,1]$. The radial Fourier identity
\[
 \widehat{B(|\cdot|)}_{\R^3}(\rho)
 =-\frac{2\pi}{\rho}\frac d{d\rho}\widehat B_{\R}(\rho),
 \qquad\rho>0,
\]
follows by differentiating the one-dimensional cosine transform in the radial Fourier formula, and remains valid for tempered distributions. It shows that the radial lift has spectrum in the unit ball. The Fourier transform of
$(4\pi|u|)^{-1}$ is $|\xi|^{-2}$. This proves the multiplier
identity outside that ball. Since $\mathcal K\in L^1$, its Fourier transform
is continuous; the same identity holds on the boundary.
\end{proof}

\begin{proof}[Proof of Theorem~\ref{thm:main}]
Let $f\perp\mathcal H_{<n}$. Fix a pole $e$ and average $f$ over rotations fixing $e$. This
preserves $f(e)$, preserves the gap, and does not increase
$\norm{\Delta f}_\infty$. Thus we may assume that $f$ and
$g=-\Delta f$ are zonal. Write $g(x)=\gamma(e\cdot x)$. The zonal harmonics on $S^3$ are
$U_\ell(\cos\theta)=\sin((\ell+1)\theta)/\sin\theta$. We may write
\begin{equation}\label{eq:zonalexp}
 \gamma(\cos\theta)\sin\theta=\sum_{k\geq n+1}b_k\sin k\theta,
 \qquad
 f(e)=\sum_{k\geq n+1}\frac{k b_k}{k^2-1}.
\end{equation}
For smooth $g$ the coefficients decrease rapidly. Multiplication by $\cos r$ will replace each frequency $k$ by $k-1$ and $k+1$. Dividing by $r$ then gives the radial Euclidean waves on which the inverse multiplier is known. This motivates the bounded function on $\R^3$
\begin{equation}\label{eq:h}
 h(u)=\frac{\sin r\cos r}{r}\gamma(\cos r),\qquad r=|u|,
\end{equation}
with the continuous value at zero. Then
$\norm{h}_\infty\leq\norm{g}_\infty$ and
\begin{equation}\label{eq:hshells}
 h(u)=\frac12\sum_{k\geq n+1}b_k
 \left(\frac{\sin((k+1)r)}r+\frac{\sin((k-1)r)}r\right).
\end{equation}
All frequencies here are at least $n$.

Scale the kernel by $\mathcal K_n(u)=n\mathcal K(nu)$. Then
$\norm{\mathcal K_n}_1=\beta/n^2$ and
$\widehat{\mathcal K_n}(\xi)=|\xi|^{-2}$ for $|\xi|\geq n$.
The spherical average of a plane wave is
\[
 \int_{S^2}e^{i\lambda\omega\cdot u}\,d\sigma_2(\omega)
 =\frac{\sin(\lambda r)}{\lambda r}.
\]
Consequently, for $\lambda\geq n$,
\[
 \int_{\R^3}\mathcal K_n(u)\frac{\sin(\lambda|u|)}{|u|}\,du
 =\lambda\widehat{\mathcal K_n}(\lambda)=\frac1\lambda.
\]
Integrating \eqref{eq:hshells} term by term is justified by rapid
decay of the coefficients and $\mathcal K_n\in L^1$. Using
\eqref{eq:zonalexp}, we obtain
\[
 \int_{\R^3}\mathcal K_n(u)h(u)\,du
 =\frac12\sum_{k\geq n+1}b_k
 \left(\frac1{k+1}+\frac1{k-1}\right)=f(e).
\]
It follows that
\[
 |f(e)|\leq\frac\beta{n^2}\norm{g}_\infty.
\]
The pole was arbitrary, so this proves~\eqref{eq:H23}. The limit in Theorem~\ref{thm:S3asymptotic} shows that the coefficient cannot be decreased, and hence $H(S^3)=\beta$.

\end{proof}

\section{Numerical evaluation of the limiting constant}\label{app:beta}

The positive integral~\eqref{eq:beta} gives a direct way to evaluate $\beta$. For reproducibility, we also describe the equivalent phase formula used for the numerical value quoted after Theorem~\ref{thm:S3asymptotic}. By~\eqref{eq:beta1D}, we may integrate $D(y)=(M-y^2J_s(y))/\pi$. The numerical calculation is not used in any of the proofs.

For the type-$\pi$ normalization set $\tau_j=a_j/\pi$ and
$\Phi(z)=\prod_{j\geq1}(1+(-1)^jz/\tau_j)$. Then
$\varphi(z)=\Phi(z/\pi)\Phi(-z/\pi)$. Write $c=\pi/M$ and
$\ell=\Phi'(0)$. The differential equation and numerical parameters
in~\cite[Theorem 1.1 and Section 9]{BORS} are
\begin{gather*}
 z^2\Phi''+(2z-(2c)^{-1})\Phi'
 +(\pi^2z^2/4+\ell/(2c))\Phi=0,\qquad\Phi(0)=1,\\
 c\approx0.5409288219018305893928820589996903868550,\\
 \ell\approx-0.4519521648844099974932868451365782916109.
\end{gather*}
Integration over successive sign intervals yields
\[
 J_s(y)=\frac{\pi+4\arg\Phi(iy/\pi)}y,
\]
where the argument is continued from zero. Thus the integrand in
\eqref{eq:beta1D} is
\[
 D(y)=\frac{M-\pi y-4y\arg\Phi(iy/\pi)}\pi.
\]
For large $y$ this expression suffers cancellation. The functional
equation in~\cite[Theorem 1.1]{BORS} instead gives
\[
 D(y)=\frac{4y}\pi\arctan\left(e^{-y}
 \frac{\Phi(-1/(2cy))}{\Phi(1/(2cy))}\right)
\]
as soon as $1/(2cy)<\tau_1$. This formula applies for $y\geq1$
with the parameters above.

For reproducibility, if $\Phi(z)=\sum u_nz^n$, set $u_0=1$ and
$u_{-1}=u_{-2}=0$. The differential equation gives
\[
 u_{n+1}=\frac{2c}{n+1}
 \left(\left(n(n+1)+\frac\ell{2c}\right)u_n
 +\frac{\pi^2}{4}u_{n-2}\right).
\]
The 100-digit values of $c$ and $\ell$ in~\cite[Section 9]{BORS},
rather than the rounded values above, were used with 120-digit
arithmetic to form the coefficients. Forty Taylor coefficients, with the functional equation
for $y\geq1$, and Gauss--Legendre quadrature on
$[0,1]$, $[1,4]$, $[4,12]$, $[12,32]$, and $[32,64]$ give
$2.85099942203104$ with 24, 48, and 96 nodes per interval in the
calculation used here. This agreement is a numerical check, not a
certified error bound for the displayed digits.

\section{Limiting extremal polynomials in higher dimensions}\label{sec:weightedEntireLimit}

The H\"ormander--Bernhardsson function appeared on $S^3$ because the interpolation nodes, after rescaling near the pole, were governed by an unweighted extremal problem on the line. We now consider dimensions greater than three, where the same passage introduces a weight depending on $d$. We first identify that weight and prove convergence of the extremal polynomials. Throughout this section the dimension $d\geq3$ is fixed; constants in the estimates may depend on $d$.

Normalize the polynomial from Theorem~\ref{thm:BohrAllDimensions} at the pole and put
\begin{equation}\label{eq:generalEndpointScaling}
 Q_{n,d}(\theta)=\frac{q_{n,d}(\cos\theta)}{q_{n,d}(1)},
 \qquad F_{n,d}(z)=Q_{n,d}(z/n).
\end{equation}
The polynomial $q_{n,d}$ has the parity of $n$, by uniqueness and symmetry of the weight. Consequently $Q_{n,d}$ belongs to the space $\mathcal T_n'$ introduced earlier, whose frequencies are $-n,-n+2,\ldots,n-2,n$.

\begin{lemma}\label{lem:generalEndpointMinimum}
Among $T\in\mathcal T_n'$ with $T(0)=1$, the function $Q_{n,d}$ minimizes
\begin{equation}\label{eq:generalEndpointNorm}
 J_d(T)=\int_{-\pi/2}^{\pi/2}|T(\theta)|\,|\sin\theta|^{d-3}\,d\theta.
\end{equation}
Moreover, $\norm{Q_{n,d}}_\infty=1$ on the real line, and $|F_{n,d}(z)|\leq e^{|\operatorname{Im}z|}$ for every $z\in\C$.
\end{lemma}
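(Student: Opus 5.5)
The plan is to follow the proof of Lemma~\ref{lem:point}. The weight changes only the change-of-variables bookkeeping in the minimality part, but it destroys the translation trick that was used there for the sup norm.

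\emph{Minimality.} Take first $T\in\mathcal T_n'$ real and even. Then $T(\theta)=A(\cos\theta)$ with $A$ a real polynomial of degree at most $n$ and parity $n$. Since $|T|$ and $|\sin\theta|^{d-3}$ are both $\pi$-periodic, $J_d(T)$ is the integral of $|T|\,|\sin\theta|^{d-3}$ over $(0,\pi)$. With $t=\cos\theta$ this becomes
\[
\int_{-1}^1|A(t)|(1-t^2)^{(d-4)/2}\,dt .
\]
Consider a variation $V$ with $V(0)=0$. Its polynomial in $t$ vanishes at $t=1$, hence by parity also at $t=-1$, so it has the form $(1-t^2)p(t)$ with $\deg p\leq n-2$. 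The first variation of $J_d$ at $Q_{n,d}$ in the direction $V$ is therefore a multiple of
\[
\int_{-1}^1 p(t)\,\sgn q_{n,d}(t)\,(1-t^2)^{(d-2)/2}\,dt ,
\]
which vanishes by~\eqref{eq:generalSignMoments}, since $\deg p<n$. Differentiation under the integral is justified exactly as in Lemma~\ref{lem:monic}. Convexity of $J_d$ on this affine space then gives minimality among real even competitors. A general complex competitor is reduced to this case as before: taking the real part and then averaging under $\theta\mapsto-\theta$ preserves $\mathcal T_n'$ and the value at $0$, and does not increase $J_d$, because the weight is even.

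\emph{Sup norm and growth.} Once $\norm{Q_{n,d}}_\infty=1$ is known, the growth bound follows exactly as before. Bernstein's inequality gives $|Q_{n,d}(z)|\leq e^{n|\operatorname{Im}z|}$, and rescaling gives $|F_{n,d}(z)|\leq e^{|\operatorname{Im}z|}$. So everything reduces to showing $|q_{n,d}(x)|\leq q_{n,d}(1)$ for $x\in[-1,1]$.

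\emph{Main obstacle.} This is the step I expect to be hardest, because the translation argument of Lemma~\ref{lem:point} no longer applies: translating by $\theta_0$ changes the weight $|\sin\theta|^{d-3}$. My first attempt exploits that this weight is even, vanishes at $0$, and increases on $(0,\pi/2)$. Suppose $|Q_{n,d}|$ attained a value larger than one at some $\theta_0\in(0,\pi/2]$. I would compare $J_d$ of the competitor $\theta\mapsto Q_{n,d}(\theta+\theta_0)/Q_{n,d}(\theta_0)$ with $J_d(Q_{n,d})$. To do this I would split $(-\pi/2,\pi/2)$ into the two half-periods about $\theta_0$ and estimate the change in the weight on each, with the aim of showing that the weight loss is outweighed by the factor $1/|Q_{n,d}(\theta_0)|$.

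If that comparison fails, the fallback is the reproducing identity obtained from the first variation,
\[
\int T\,\sgn Q_{n,d}\,|\sin\theta|^{d-3}\,d\theta=J_d(Q_{n,d})\,T(0),\qquad T\in\mathcal T_n' \text{ real and even}.
\]
I would combine it with a Sonin-type monotonicity argument showing that the successive local maxima of $|q_{n,d}|$ increase toward $\pm1$. This monotonicity is natural for the symmetric weight $(1-t^2)^{(d-2)/2}$ with $d\geq3$, since that weight concentrates the zeros toward the center of the interval.
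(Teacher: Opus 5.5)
Your minimality argument and the Bernstein step are the same as the paper's. The gap is that the bound $\norm{Q_{n,d}}_\infty=1$, on which the growth estimate rests, is not proved, and neither of your two routes will supply it.

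For the shifted competitor you have $J_d(Q_{n,d}(\cdot+\theta_0))=\int|Q_{n,d}(\phi)|\,|\sin(\phi-\theta_0)|^{d-3}\,d\phi$. When $d>3$ the shifted weight is not dominated by $|\sin\phi|^{d-3}$ with any constant: it is positive at $\phi=0$, where the original weight vanishes. In a contradiction argument you only know $|Q_{n,d}(\theta_0)|>1$, so nothing controls the change in the weight. The Sonin-type fallback presupposes a second-order differential equation for $q_{n,d}$, and an $L^1$ extremal polynomial has none. The remark that the weight pushes zeros toward the center is a heuristic, not an argument.

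The missing idea is to realize the weight so that a transitive group acts without changing it. Up to a constant, $|\sin\theta|^{d-3}\,d\theta$ is the angular part of surface measure on $S^{d-2}$. Hence $J_d(T)$ is a multiple of the $L^1(S^{d-2})$ norm of the zonal function $\omega\mapsto P(e\cdot\omega)$, where $T(\theta)=P(\cos\theta)$.

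Enlarge the competition to all spherical polynomials on $S^{d-2}$ of degree at most $n$ and parity $n$ with value $1$ at $e$. Averaging over the orthogonal maps fixing $e$ keeps the value at $e$ and does not increase the norm. It also lands in the zonal class your minimality step already covers, so $Q_{n,d}$ is minimal in the enlarged class.

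That class is invariant under the whole orthogonal group, which is transitive on $S^{d-2}$. If $|Q_{n,d}|>1$ at some point, rotate that point to $e$ and divide by the value there. This gives an admissible competitor of strictly smaller norm, a contradiction.

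For $d=3$ this is exactly the translation argument of Lemma~\ref{lem:point}. In one-variable terms, ordinary translation must be replaced by the Gegenbauer generalized translation. That operator is positive, preserves degree and parity, and does not increase the weighted $L^1$ norm.
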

\begin{proof}
Taking real parts and then averaging under $\theta\mapsto-\theta$ reduces the minimization to $T(\theta)=P(\cos\theta)$, where $P$ is a real polynomial of degree at most $n$ and parity $n$. The difference of two admissible polynomials vanishes at both $1$ and $-1$, so it has the form $(1-t^2)p(t)$ with $\deg p\leq n-2$. On changing variables in~\eqref{eq:generalEndpointNorm}, the measure becomes $(1-t^2)^{(d-4)/2}\,dt$. The additional factor $1-t^2$ in a variation restores the weight in~\eqref{eq:generalSignMoments}. Those moments therefore make the first variation vanish at $Q_{n,d}$, and convexity of the absolute value proves minimality. This also explains the exponent $d-3$ in the angular weight.

To prove the uniform bound when the weight is nonconstant, regard $q_{n,d}(e\cdot\omega)/q_{n,d}(1)$ as a zonal polynomial on $S^{d-2}$. The angular surface measure on this sphere is proportional to $\sin^{d-3}\theta\,d\theta$. Consider all spherical polynomials of degree at most $n$ and parity $n$. Averaging over the orthogonal transformations fixing $e$ preserves the value at $e$ and does not increase the integral norm; the average is a zonal polynomial of the form already considered. Thus our polynomial minimizes the integral norm in this larger space as well, subject to its value at $e$ being one. If its modulus exceeded one at another point, moving that point to $e$ by an orthogonal transformation and dividing by the resulting value at $e$ would decrease the norm. This contradiction proves the uniform bound. For $S^1$ the averaging includes reflection across the axis through $e$. Finally, the Bernstein bound for trigonometric polynomials, followed by the scaling in~\eqref{eq:generalEndpointScaling}, gives the stated bound on $\C$.
\end{proof}

After the substitution $x=n\theta$, the interval expands to the real line and the weight approaches $|x|^{d-3}$. This leads to the constant
\begin{equation}\label{eq:weightedEntireMinimum}
 M_d=\min_F\int_\R |F(x)|\,|x|^{d-3}\,dx,
 \qquad F(0)=1.
\end{equation}
Here the minimum is taken over real even entire functions of exponential type at most one for which the displayed integral is finite. Denote its minimizer by $\Phi_d$. The correspondence $F\mapsto F(|\cdot|)$ identifies this problem, up to the surface-area factor in polar coordinates, with the radial $L^1$ point-evaluation problem in $\R^{d-2}$. The existence and uniqueness of the radial extremizer follow from Dai, Gorbachev, and Tikhonov~\cite[Theorem~4.2]{DGT}. In particular, when $d=3$ the weight disappears: $\Phi_3=\varphi$ and $M_3=M$ in the notation of~\eqref{eq:HB}. Thus the H\"ormander--Bernhardsson problem is precisely the three-dimensional member of this family.

\begin{proposition}\label{prop:weightedEndpointConvergence}
For each fixed $d\geq3$, the functions $F_{n,d}$ converge to $\Phi_d$ locally uniformly on $\C$. For every real $b$ with $0\leq b\leq d-3$,
\begin{equation}\label{eq:weightedMomentConvergence}
 n^{b+1}\int_{-\pi/2}^{\pi/2}|Q_{n,d}(\theta)|\,|\sin\theta|^b\,d\theta
 \longrightarrow\int_\R|\Phi_d(x)|\,|x|^b\,dx.
\end{equation}
In particular, $n^{d-2}J_d(Q_{n,d})\longrightarrow M_d$.
\end{proposition}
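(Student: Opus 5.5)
The argument follows the proof of Proposition~\ref{prop:convergence}, with the weight $|\sin\theta|^{d-3}$, which becomes $|x|^{d-3}$ after rescaling. The first step is an upper bound by periodized competitors. Let $F$ be a real even Schwartz function of exponential type strictly less than one, with $F(0)=1$. Form
\[
 T_n(\theta)=\sum_{j\in\mathbb Z}(-1)^{nj}F\bigl(n(\theta+j\pi)\bigr)\in\mathcal T_n'.
\]
Then $T_n(0)\to1$. Since $|\sin\theta|\le|\theta|$, substituting $x=n\theta$ gives
\[
 n^{d-2}J_d(T_n)\le\int_\R|F(x)||x|^{d-3}\,dx+o(1).
\]
The terms with $j\ne0$ contribute negligibly because of Schwartz decay. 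By Lemma~\ref{lem:generalEndpointMinimum}, $\limsup n^{d-2}J_d(Q_{n,d})\le\int|F||x|^{d-3}$.

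I then approximate $\Phi_d$ in the weighted norm by such $F$, using $\Phi_d((1-2\varepsilon)x)\psi(\varepsilon x)$ as before. This needs $\psi$ to have $|\psi|\le1$ and to decay fast enough that the product is Schwartz. The dilation contributes a factor $(1-2\varepsilon)^{-(d-2)}\to1$, and dominated convergence handles the multiplier. Hence $\limsup n^{d-2}J_d(Q_{n,d})\le M_d$.

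The second step is compactness and identification of the limit. Lemma~\ref{lem:generalEndpointMinimum} gives $|F_{n,d}(z)|\le e^{|\operatorname{Im}z|}$, so the family is normal. Any subsequential limit $F_*$ is real, even, of type at most one, and satisfies $F_*(0)=1$. On $|x|\le R$ we have $n|\sin(x/n)|\to|x|$ uniformly, so
\[
 \int_{-R}^R|F_{n,d}(x)|\,\bigl(n|\sin(x/n)|\bigr)^{d-3}dx\le n^{d-2}J_d(Q_{n,d}).
\]
Fatou's lemma then gives $\int|F_*||x|^{d-3}\le\liminf n^{d-2}J_d(Q_{n,d})$. Combining this with the definition of $M_d$ and the upper bound, $n^{d-2}J_d(Q_{n,d})\to M_d$ and $F_*$ is a minimizer. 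Uniqueness from~\cite{DGT} gives $F_*=\Phi_d$, so the full sequence converges locally uniformly on $\C$. The same argument yields tightness:
\[
 \lim_{R\to\infty}\limsup_{n\to\infty}\int_{R<|x|\le\pi n/2}|F_{n,d}(x)|\bigl(n|\sin(x/n)|\bigr)^{d-3}dx=0.
\]

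The third step treats general $0\le b\le d-3$. After rescaling, the quantity in~\eqref{eq:weightedMomentConvergence} is $\int_{|x|\le\pi n/2}|F_{n,d}(x)|(n|\sin(x/n)|)^b\,dx$. On $[-R,R]$ it converges to $\int_{-R}^R|\Phi_d||x|^b$ by local uniform convergence. For the tail with $R\ge1$, I use $n|\sin(x/n)|\ge 2|x|/\pi\ge 2/\pi$ on $|x|\le\pi n/2$. This gives
\[
 (n|\sin(x/n)|)^b\le(\pi/2)^{d-3-b}(n|\sin(x/n)|)^{d-3},
\]
so the weighted tightness dominates the $b$-tail uniformly in $n$. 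The tail of the limit integral is finite because $|x|^b\le|x|^{d-3}$ for $|x|\ge1$.

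I expect the main obstacle to be the competitor construction near the origin when $d$ is large. The weight gives no trouble there. The real question is whether the small-$\theta$ factor $|\sin\theta|^{d-3}$ allows periodization errors to be controlled uniformly. It should, since the error terms are $O(n^{-N})$ in sup norm and $J_d$ is taken over a bounded interval.
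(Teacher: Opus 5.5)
Your proposal is correct and follows essentially the same route as the paper. Both arguments use periodized Schwartz competitors with the $\Phi_d((1-2\varepsilon)x)\psi(\varepsilon x)$ approximants for the upper bound, then normality, Fatou, and uniqueness of $\Phi_d$ for the lower bound and the identification of the limit, and finally tightness of the weight-$(d-3)$ mass with $n|\sin(x/n)|\geq 2|x|/\pi$ to handle $0\le b\le d-3$. The only minor differences are these: the paper unfolds all periodization terms at once via $|\sin\theta|=|\sin(\theta+j\pi)|\le|\theta+j\pi|$, so it needs no $o(1)$; and it notes that $\Phi_d\in L^1(\R)$ has bounded derivatives on $\R$, which, together with the decay of $\psi$, is what makes the products Schwartz.
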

\begin{proof}
Write $\alpha=d-3$. The proof follows the periodization argument used for $S^3$, but we keep the weight in order to control the tails. Let $F$ be real and even, of exponential type strictly less than one, and Schwartz on the real line, with $F(0)=1$. Its periodization
\[
 T_n(\theta)=\sum_{j\in\mathbb Z}(-1)^{nj}F\bigl(n(\theta+j\pi)\bigr)
\]
belongs to $\mathcal T_n'$ by Poisson summation, and $T_n(0)\to1$. The triangle inequality and $n|\sin(x/n)|\leq|x|$ give
\[
 n^{\alpha+1}J_d(T_n)\leq\int_\R|F(x)|\,|x|^\alpha\,dx.
\]
Such functions approximate $\Phi_d$ in the weighted integral norm. Indeed, choose a real even Schwartz function $\psi$ of type at most one with $\psi(0)=1$ and $|\psi|\leq1$, and use $\Phi_d((1-2\varepsilon)x)\psi(\varepsilon x)$. These functions have type less than one; dilation continuity and dominated convergence prove the approximation as $\varepsilon\downarrow0$. The extremal function belongs also to ordinary $L^1(\R)$, and its derivatives are bounded on the real line, so the products are Schwartz. Applying Lemma~\ref{lem:generalEndpointMinimum} to $T_n/T_n(0)$ proves the upper bound
\[
 \limsup_{n\to\infty}n^{\alpha+1}J_d(Q_{n,d})\leq M_d.
\]

The same lemma makes the functions $F_{n,d}$ a normal family. With $w_n(x)=n|\sin(x/n)|$, the norm being considered is
\[
 m_{n,d}:=n^{\alpha+1}J_d(Q_{n,d})
 =\int_{-\pi n/2}^{\pi n/2}|F_{n,d}(x)|\,w_n(x)^\alpha\,dx.
\]
Choose a subsequence realizing the lower limit of $m_{n,d}$ and then a locally uniformly convergent further subsequence. Its limit $F_*$ is real and even, has type at most one, and satisfies $F_*(0)=1$. Since $w_n(x)\to|x|$ locally, Fatou's lemma on bounded intervals, followed by expansion of the intervals, gives
\[
 M_d\leq\int_\R|F_*(x)|\,|x|^\alpha\,dx
 \leq\liminf_{n\to\infty}m_{n,d}.
\]
Together with the upper bound this proves convergence of the minimum values. Every locally uniform subsequential limit must now be a minimizer in~\eqref{eq:weightedEntireMinimum}; uniqueness gives convergence of the full sequence to $\Phi_d$.

Subtracting the integral over $[-R,R]$ from $m_{n,d}$ proves tightness of the mass with weight $w_n^\alpha$. On the expanding interval, $w_n(x)\geq2|x|/\pi$. For $b<\alpha$ and $|x|>R$, the ratio $w_n(x)^b/w_n(x)^\alpha$ is therefore at most $(\pi/(2R))^{\alpha-b}$. This bounds the tails for every lower weight in the proposition; local convergence supplies the integrals on $[-R,R]$. The case $b=\alpha$ follows from the already established mass convergence, proving~\eqref{eq:weightedMomentConvergence}.
\end{proof}

We will also need control of the normalized polynomial outside the interval of interpolation. This estimate makes the logarithmic part of the Green kernel negligible in the two even-dimensional cases considered below.

\begin{lemma}\label{lem:generalOutsideIntegral}
For each fixed $d\geq3$, as $n\to\infty$,
\begin{equation}\label{eq:generalOutsideIntegral}
 \int_1^\infty\frac{a\,q_{n,d}(1)}{q_{n,d}(a)}\,da=O(n^{-2}).
\end{equation}
\end{lemma}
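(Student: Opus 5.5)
The plan is to pass to the rescaled variable in which the normalized extremal polynomial becomes $F_{n,d}$, and to use two lower bounds for $q_{n,d}(a)/q_{n,d}(1)$. The first is a crude geometric bound that handles $a$ away from $1$. The second is a uniform quartic lower bound, inherited from the limit $\Phi_d$ of Proposition~\ref{prop:weightedEndpointConvergence}, which handles the region near $a=1$ at the scale $n^{-2}$. Throughout, write $r_1,\dots,r_n\in(-1,1)$ for the zeros of $q_{n,d}$ (Lemma~\ref{lem:monic}), so that
\[
 \frac{q_{n,d}(a)}{q_{n,d}(1)}=\prod_{j=1}^n\Bigl(1+\frac{a-1}{1-r_j}\Bigr),\qquad a\geq1 .
\]
Every factor is at least $(1+a)/2$. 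Hence $q_{n,d}(1)/q_{n,d}(a)\leq\bigl(2/(1+a)\bigr)^n$, and the part of the integral over $a\geq\cosh 1$ is $O\bigl(((1+\cosh1)/2)^{-n}\bigr)$, which is exponentially small.

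For $1\leq a\leq\cosh1$, substitute $a=\cosh(y/n)$ with $0\leq y\leq n$. Since $\cos(iy/n)=\cosh(y/n)$, we have $q_{n,d}(a)/q_{n,d}(1)=Q_{n,d}(iy/n)=F_{n,d}(iy)$. Therefore this part of the integral equals
\[
 \frac1{n^2}\int_0^n\frac{\cosh(y/n)\,n\sinh(y/n)}{F_{n,d}(iy)}\,dy
 \leq\frac{C}{n^2}\int_0^n\frac{y\,dy}{F_{n,d}(iy)} .
\]
It suffices to show that $F_{n,d}(iy)\geq1+cy^4$ for some $c>0$, all $y\geq0$, and all large $n$; then the last integral is bounded independently of $n$. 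A bound of the form $1+cy^2$, analogous to the one used in the proof of Theorem~\ref{thm:S3asymptotic}, would leave a logarithmic loss, which is why a quartic bound is needed.

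To obtain it, pair the zeros $\pm r_j$ (parity of $q_{n,d}$; for odd $n$ the extra factor at $r=0$ is $\cosh(y/n)\geq1$) and write
\[
 F_{n,d}(iy)=\prod_j\Bigl(1+\frac{\sinh^2(y/n)}{1-r_j^2}\Bigr),
\]
possibly times $\cosh(y/n)$. As a power series in $y^2$ this has nonnegative coefficients, so $F_{n,d}(iy)\geq1+\kappa_{n}y^4$, where $\kappa_n$ is its $y^4$-coefficient. By the locally uniform convergence $F_{n,d}\to\Phi_d$ of Proposition~\ref{prop:weightedEndpointConvergence}, Cauchy estimates give $\kappa_n\to\kappa$, the $y^4$-coefficient of $\Phi_d(iy)$. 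It then remains to show $\kappa>0$.

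This positivity is the step I expect to need the most care. The limit $\Phi_d(iy)$ again has nonnegative coefficients in $y^2$, as a limit of such series. I would argue as follows. Each coefficient of $F_{n,d}(iy)$ is a nonnegative combination of elementary symmetric functions of the numbers $1/(1-r_j^2)$, weighted by Taylor coefficients of $\sinh^2(y/n)$; these weights carry extra powers of $n^{-2}$ when they do not come from leading terms. Hence the $y^4$-coefficient dominates $e_2\bigl(\{n^{-2}/(1-r_j^2)\}\bigr)$. If $\kappa$ were zero, then all but a vanishing fraction of the mass of $\sum_j n^{-2}/(1-r_j^2)$ would sit in a single term. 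In that case $\Phi_d(iy)$ would be at most a quadratic polynomial in $y$. This is impossible: $\Phi_d$ is even and of exponential type, and $\int|\Phi_d||x|^{d-3}\,dx<\infty$ with $\Phi_d(0)=1$ rules out every polynomial. So $\kappa>0$, and combining the two regions yields~\eqref{eq:generalOutsideIntegral}.

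If this symmetric-function argument proves awkward, I would use a simpler fallback. Take instead the bound $F_{n,d}(iy)\geq1+c_1y^2+\kappa_ny^4$ together with the cruder fact that $\Phi_d$ has at least two positive zeros; this also forces the $y^4$-coefficient of $\Phi_d(iy)$ to be positive directly from the real-zero product structure.
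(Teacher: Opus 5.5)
Your proof is correct. Its skeleton matches the paper's: factor $q_{n,d}(a)/q_{n,d}(1)$ over the roots and prove a quartic lower bound at the scale $a-1\sim n^{-2}$. You are right that a quadratic bound would lose a logarithm.

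The paper organizes this more economically. The substitution $a=\sqrt{1+s^2/n^2}$ makes $a\,da=s\,ds/n^2$ and $(a^2-1)/(1-r_j^2)=s^2/(n^2(1-r_j^2))$ exactly. So no separate treatment of $a\geq\cosh 1$ is needed, and the bound $A(a)\geq(1+cs^2)^2$ holds for all $s\geq0$.

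The key positivity step is where your routes genuinely differ. The paper obtains the quartic bound from two individual roots of $q_{n,d}$ with $n\arccos r_j$ bounded. These roots come from three facts:
\begin{itemize}
\item Hurwitz's theorem shows that all zeros of $\Phi_d$ are real.
\item Hadamard factorization and evenness give infinitely many zeros, since otherwise $\Phi_d$ would be a polynomial.
\item Local convergence of zeros transfers two positive zeros of $\Phi_d$ to roots of $q_{n,d}$.
\end{itemize}
This is essentially your fallback, and it yields $\kappa_n\geq e_2\geq x_1x_2\geq c^2$ directly.

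Your main route instead uses convergence of the $y^4$ Taylor coefficient together with a concentration argument. This is slightly more economical in hypotheses: it never uses anything about the zeros of $\Phi_d$, only that $\Phi_d$ is not a polynomial. The price is that the concentration step is only sketched. To complete it, write $x_j=n^{-2}/(1-r_j^2)$ and argue as follows.
\begin{itemize}
\item $e_1(x)$ converges to the $y^2$-coefficient $c_1$ of $\Phi_d(iy)$.
\item If $c_1>0$, then $e_2(x)\to0$ together with $\sum_j x_j^2\leq(\max_j x_j)\,e_1(x)$ forces $\max_j x_j\to c_1$, while the remaining $x_j$ sum to $o(1)$.
\item The bound $\prod_{j\neq j^*}(1+u_j)\leq\exp\sum_{j\neq j^*}u_j$ then shows that the other factors tend to one locally uniformly. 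Hence $\Phi_d(iy)=1+c_1y^2$ along a subsequence, so $\Phi_d(z)=1-c_1z^2$, which contradicts weighted integrability.
\item The case $c_1=0$ gives $\Phi_d\equiv1$, with the same contradiction.
\end{itemize}
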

\begin{proof}
All zeros of $F_{n,d}$ are real, since the zeros of $q_{n,d}$ lie in $(-1,1)$. Hurwitz's theorem and Proposition~\ref{prop:weightedEndpointConvergence} imply that all zeros of $\Phi_d$ are real as well. There are infinitely many: otherwise Hadamard factorization and evenness would make $\Phi_d$ a polynomial, contrary to its weighted integrability. Also $\Phi_d(0)=1$. Choose two positive zeros, counted with multiplicity. Local convergence supplies two corresponding roots $r_1,r_2$ of $q_{n,d}$ near one, for which $n\arccos r_j$ stays bounded.

Put $A(a)=q_{n,d}(a)/q_{n,d}(1)$ and let $\epsilon$ be the parity of $n$. Factoring over the positive roots $r_j$ gives
\[
 A(a)=a^\epsilon\prod_{j=1}^{\lfloor n/2\rfloor}
 \left(1+\frac{a^2-1}{1-r_j^2}\right).
\]
For $a=\sqrt{1+s^2/n^2}$, the two roots just selected show that $A(a)\geq(1+cs^2)^2$, with a fixed $c>0$, for all $s\geq0$ and all sufficiently large $n$. Since $a\,da=s\,ds/n^2$, the integral in~\eqref{eq:generalOutsideIntegral} is at most
\[
 \frac1{n^2}\int_0^\infty\frac{s\,ds}{(1+cs^2)^2}.
\]
This proves the estimate.
\end{proof}

\section{Bohr limits on \texorpdfstring{$S^4$ and $S^6$}{S4 and S6}}\label{sec:evenBohrLimits}

The limiting extremal polynomial problem is now available in every dimension. To obtain the Bohr limit, we must also control the interpolation remainder of the Green kernel. The explicit kernels~\eqref{eq:S4Green} and~\eqref{eq:S6Green} reduce this step to simple and double poles, with a logarithmic remainder that is of smaller order.

\begin{theorem}\label{thm:BohrLimitsFourSix}
With the extremal functions and minimum values defined in~\eqref{eq:weightedEntireMinimum},
\begin{equation}\label{eq:BohrLimitsFourSix}
 \begin{aligned}
 \lim_{n\to\infty}n^2H(n,S^4)&=\frac{M_4}{4},\\
 \lim_{n\to\infty}n^2H(n,S^6)
 &=\frac1{16}\left(2\int_\R|\Phi_6(x)|\,|x|\,dx-M_6\Phi_6''(0)\right).
 \end{aligned}
\end{equation}
\end{theorem}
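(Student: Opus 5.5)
By Theorem~\ref{thm:BohrAllDimensions}, $H(n,S^d)=\int_{-1}^1|G_d-p_{n-1,d}|\,d\mu_d$, and the interpolation error has the sign of $q_{n,d}$. I would therefore compute the error explicitly term by term, using the even-dimensional structure of Proposition~\ref{prop:evenGreenStructure}. Interpolation at the zeros of $q=q_{n,d}$ annihilates the constant. For a simple pole the exact remainder is $q(t)/((a-t)q(a))$, already used on $S^3$; setting $a=1$ gives
\[
 \frac{1}{1-t}-\mathcal I\frac1{1-t}=\frac{q(t)}{(1-t)q(1)}.
\]
Differentiating in $a$ gives the double-pole remainder
\[
 \frac{q(t)}{(1-t)^2q(1)}+\frac{q(t)\,q'(1)}{(1-t)q(1)^2}.
\]
For the logarithm I would use $-\log\frac{1-t}{2}=\int_1^\infty\bigl(\frac1{a-t}-\frac1{a+1}\bigr)\,da$, whose remainder is $q(t)\int_1^\infty\frac{da}{(a-t)q(a)}$. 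Every piece is $q(t)$ times a positive factor; since $q'(1)/q(1)>0$ because all zeros lie in $(-1,1)$, the total remainder has the sign of $q$. Hence $H(n,S^d)$ is the sum of the individual integrals $\int|q|\cdot(\text{factor})\,d\mu_d$, with no cancellation, and I can take limits term by term.

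\emph{$S^4$.} The main term is $\frac13\cdot\frac34\int_{-1}^1|Q_{n,4}(\arccos t)|(1+t)\,dt$. Substitute $t=\cos\theta$; the factor $(1+t)$ combines with $dt=\sin\theta\,d\theta$. Because $|Q_{n,4}|$ is $\pi$-periodic and even, pairing $\theta$ with $\pi-\theta$ kills the $\cos\theta\sin\theta$ part, so the term equals $\frac14\int_0^\pi|Q|\sin\theta\,d\theta=\frac14J_4(Q_{n,4})$. Proposition~\ref{prop:weightedEndpointConvergence} then gives $n^2\cdot\frac14J_4\to M_4/4$.

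\emph{$S^6$.} The terms are:
\begin{itemize}
\item the double pole, $\frac1{15}\cdot\frac{15}{16}\int|Q|(1+t)^2\,dt$;
\item the simple pole with coefficient $\frac15$;
\item the cross term $\frac1{15}\frac{q'(1)}{q(1)}\int|Q|(1+t)(1-t^2)\,dt\cdot\frac{15}{16}$.
\end{itemize}
After the same $\theta\leftrightarrow\pi-\theta$ symmetrization, these become weighted integrals $\int_0^\pi|Q|\,|\sin\theta|^b\,d\theta$ with $b=1$ and $b=3$, plus $\cos^2$ corrections of lower order.
\begin{itemize}
\item By Proposition~\ref{prop:weightedEndpointConvergence}, the $b=1$ pieces, multiplied by $n^2$, tend to $\int|\Phi_6||x|\,dx$ with total coefficient $2/16$.
\item For the cross term, $q'(1)/q(1)=-Q''(0)=-n^2F_{n,6}''(0)\sim -n^2\Phi_6''(0)$ by local uniform convergence.
\item Also $n^4J_6(Q)\to M_6$.
\end{itemize}
The cross term therefore contributes $-M_6\Phi_6''(0)/16$ after multiplication by $n^2$. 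I would check the bookkeeping of these constants carefully against \eqref{eq:S6Green}, including whether the leftover $(1+t)^2$ terms reduce exactly to the $b=1$ and $b=3$ moments.

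\emph{The logarithmic remainder.} This is the main obstacle. It is $\frac1{d-1}\int|q(t)|\int_1^\infty\frac{da}{(a-t)q(a)}\,d\mu_d$, and I need it to be $o(n^{-2})$. The difficulty is that $1/(a-t)$ is not uniformly bounded near $a=t=1$. I would split at $a=2$ (the tail is trivial) and use $\frac{1}{a-t}\le\frac{1}{1-t}$ together with $a\geq1$. This bounds the term by
\[
 \int\frac{|Q|}{1-t}\,d\mu_d\cdot\int_1^2\frac{a\,q(1)}{q(a)}\,da.
\]
Here the first factor is $O(n^{-2})$, as in the simple-pole computation, and the second is $O(n^{-2})$ by Lemma~\ref{lem:generalOutsideIntegral}. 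The product is $O(n^{-4})=o(n^{-2})$, which completes the limits in \eqref{eq:BohrLimitsFourSix}.
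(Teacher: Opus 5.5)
Your proposal is correct and is essentially the paper's proof. It uses the same remainder decomposition via the resolvent identity and its $a$-derivative, positivity so that the pole and logarithmic errors add, the moment limits of Proposition~\ref{prop:weightedEndpointConvergence} together with $\lambda_n/n^2=-F_{n,6}''(0)\to-\Phi_6''(0)$, and a product bound with Lemma~\ref{lem:generalOutsideIntegral} for the logarithm. The only difference there is that you use $1/(a-t)\le1/(1-t)$ and $a\ge1$ on all of $[1,\infty)$, so no split at $a=2$ is needed, while the paper pairs $t$ with $-t$ and uses $(1-t^2)/(a^2-t^2)\le1$.

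One wording fix: the $t^2=\cos^2\theta$ part of $(1+t)^2$ is not of lower order. It is writing $1+t^2=2-(1-t^2)$ that yields your (correct) coefficient $2/16$ for the $b=1$ moment, with the $(1-t^2)$ pieces vanishing after multiplication by $n^2$.
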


The first formula uses only the minimum norm with weight $|x|$. In the second formula, $\Phi_6$ minimizes the norm with weight $|x|^3$, whereas the additional integral has weight $|x|$. Both limits are determined by their respective extremal functions, but the simple-pole structure in dimension four is what reduces the answer to a multiple of $M_4$ alone.

\begin{proof}
We separate the pole terms from the logarithm. Each nonconstant term of the kernels~\eqref{eq:S4Green} and~\eqref{eq:S6Green} has positive derivatives of every positive order. Its interpolation remainder therefore has the sign of $q_{n,d}$. Since the coefficients are positive, the integral errors of the terms add exactly.

For either dimension write $A(t)=q_{n,d}(t)/q_{n,d}(1)$, and let $I$ denote interpolation at the zeros of $q_{n,d}$. For $a>1$, polynomial division gives
\begin{equation}\label{eq:generalResolventRemainder}
 \frac1{a-t}-I\left[\frac1{a-t}\right]
 =\frac{A(t)}{(a-t)A(a)}.
\end{equation}
Integrating this identity in $a$ gives the remainder for $-\log((1-t)/2)$:
\begin{equation}\label{eq:generalLogRemainder}
 A(t)\int_1^\infty\frac{da}{(a-t)A(a)}.
\end{equation}
Indeed, the logarithm is the integral of $(a-t)^{-1}-(a+1)^{-1}$ from one to infinity, and the second term is constant in $t$ and is reproduced by $I$.

For $d=4$, setting $a=1$ in~\eqref{eq:generalResolventRemainder} shows that the pole contributes
\[
 \frac14\int_{-1}^1|A(t)|(1+t)\,dt
 =\frac14\int_{-1}^1|A(t)|\,dt
 =\frac14J_4(Q_{n,4}).
\]
Here $|A|$ is even. By Proposition~\ref{prop:weightedEndpointConvergence}, this contribution, multiplied by $n^2$, tends to $M_4/4$.

To estimate the logarithmic contribution $L_{n,4}$, pair $t$ and $-t$ in~\eqref{eq:generalLogRemainder}. Since $A(a)>0$ for $a\geq1$, this gives
\[
 \begin{aligned}
 L_{n,4}
 &=\frac12\int_0^1|A(t)|(1-t^2)
       \int_1^\infty\frac{a\,da}{(a^2-t^2)A(a)}\,dt\\
 &\leq\frac12\left(\int_0^1|A(t)|\,dt\right)
       \left(\int_1^\infty\frac{a\,da}{A(a)}\right).
 \end{aligned}
\]
We used $(1-t^2)/(a^2-t^2)\leq1$. The first factor is $O(n^{-2})$ by~\eqref{eq:weightedMomentConvergence}, and the second is $O(n^{-2})$ by Lemma~\ref{lem:generalOutsideIntegral}. Thus $L_{n,4}=O(n^{-4})$, proving the first limit.

For $d=6$, put $\lambda_n=A'(1)$ and abbreviate
\[
 J_{0,n}=\int_{-1}^1|A(t)|\,dt,
 \qquad J_{1,n}=\int_{-1}^1|A(t)|(1-t^2)\,dt.
\]
These integrals correspond to the angular weights $|\sin\theta|$ and $|\sin\theta|^3$. Differentiating~\eqref{eq:generalResolventRemainder} with respect to $a$, changing the sign, and setting $a=1$ yields
\[
 \frac1{(1-t)^2}-I\left[\frac1{(1-t)^2}\right]
 =A(t)\left(\frac1{(1-t)^2}+\frac{\lambda_n}{1-t}\right).
\]
Multiply by the weight in~\eqref{eq:S6Green} and pair opposite points. The double pole contributes $[2J_{0,n}+(\lambda_n-1)J_{1,n}]/16$, and the simple pole contributes $3J_{1,n}/16$. Their sum is therefore
\begin{equation}\label{eq:S6PoleContribution}
 \frac{2J_{0,n}+(\lambda_n+2)J_{1,n}}{16}.
\end{equation}
Proposition~\ref{prop:weightedEndpointConvergence}, together with local convergence of derivatives, gives
\[
 n^2J_{0,n}\longrightarrow\int_\R|\Phi_6(x)|\,|x|\,dx,
 \qquad n^4J_{1,n}\longrightarrow M_6,
 \qquad \frac{\lambda_n}{n^2}=-F_{n,6}''(0)\longrightarrow-\Phi_6''(0).
\]
These are precisely the terms in the second limit of~\eqref{eq:BohrLimitsFourSix}.

Finally, the logarithmic contribution on $S^6$ satisfies
\[
 0\leq L_{n,6}\leq\frac38
 \left(\int_0^1|A(t)|(1-t^2)\,dt\right)
 \left(\int_1^\infty\frac{a\,da}{A(a)}\right).
\]
The estimate follows by the same pairing as above, now with the weight $(1-t^2)^2$. Its factors are $O(n^{-4})$ and $O(n^{-2})$, so $L_{n,6}=O(n^{-6})$. It does not contribute to the limit, completing the proof.
\end{proof}

The simple pole in dimension four leads to the single minimum value $M_4$. The double pole in dimension six also retains a lower weighted moment and the second derivative of $\Phi_6$ at zero. Higher even dimensions involve further pole remainders. For $S^4$ and $S^6$ we can go further: the next section proves that the limits just obtained are also the best coefficients valid at every $n$.

\section{Uniform Bohr bounds on \texorpdfstring{$S^4$ and $S^6$}{S4 and S6}}\label{sec:evenBohrUniform}

The preceding limits give necessary lower bounds for the coefficients valid at every $n$. We now prove that those limiting values are sufficient. The idea is to move a fixed extremal source to larger harmonic gaps while keeping its norm at most one. A fractional-linear change of variable, followed by an angular dilation, gives the required comparison for the complete Green kernel.

Write
\[
 \beta_4=\frac{M_4}{4},\qquad
 \beta_6=\frac1{16}\left(2\int_\R|\Phi_6(x)|\,|x|\,dx-M_6\Phi_6''(0)\right).
\]
These are the limits established in~\eqref{eq:BohrLimitsFourSix}.

\begin{theorem}\label{thm:evenBohrUniform}
For $d=4$ and $d=6$,
\begin{equation}\label{eq:evenBohrSupremum}
 H(S^d)=\sup_{n\geq1}n^2H(n,S^d)
 =\lim_{n\to\infty}n^2H(n,S^d)=\beta_d.
\end{equation}
For every positive integer $n$, the finite value $n^2H(n,S^d)$ is strictly smaller than $\beta_d$. On $S^4$ we also have the quantitative estimate
\begin{equation}\label{eq:S4QuantitativeBohr}
 n^2H(n,S^4)\leq\beta_4\frac{(n+1)^2}{n(n+3)}.
\end{equation}
Consequently, every smooth $f\perp\mathcal H_{<n}$ on either sphere satisfies $\norm{f}_\infty\leq\beta_d n^{-2}\norm{\Delta f}_\infty$, and the coefficient $\beta_d$ cannot be decreased uniformly in $n$.
\end{theorem}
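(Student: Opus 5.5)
The hint at the start of the section fixes the strategy: take a fixed extremal source, move it to larger harmonic gaps by a fractional-linear change of variable followed by an angular dilation, and keep its norm at most one. Since the limit $\beta_d$ is already known (Theorem~\ref{thm:BohrLimitsFourSix}), it suffices to prove the strict inequality $n^2H(n,S^d)<\beta_d$ for each fixed $n$. The equalities in~\eqref{eq:evenBohrSupremum} and the sharpness statement then follow at once.

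I will argue by monotone comparison: for fixed $n$ I aim to show that $n^2H(n,S^d)<N^2H(N,S^d)$ for some much larger $N$, or at least $n^2H(n,S^d)\le\liminf_N N^2H(N,S^d)$ with strictness. Concretely, let $s_n(\omega)=\sgn q_{n,d}(e\cdot\omega)$ be the Bohr extremal source of Theorem~\ref{thm:BohrAllDimensions}. It is a zonal function of the angle $\theta$ with $\sigma$-moments vanishing against polynomials of degree $<n$. The trigonometric reformulation of Lemma~\ref{lem:generalEndpointMinimum} suggests the transfer $\theta\mapsto k\theta$: compose the $\pi$-periodic sign pattern of $Q_{n,d}$ with a $k$-fold angular dilation, producing a zonal sign function on $S^d$. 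Its low moments vanish up to degree about $kn$, because frequencies multiply by $k$. The weight, however, changes from $\sin^{d-1}\theta$ to a dilated weight, and that is exactly what the fractional-linear substitution has to repair.

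For $d=4$ I would work with the variable $t$ and the kernel $G_4=\tfrac13\bigl((1-t)^{-1}-\log\tfrac{1-t}2\bigr)-\tfrac79$. The key step is to prove, using the remainder formula~\eqref{eq:generalResolventRemainder}, that
\[
 H(n,S^4)=\tfrac14 J_4(Q_{n,4})+L_{n,4}.
\]
Here $L_{n,4}>0$ is the logarithmic term, written as the explicit positive integral~\eqref{eq:generalLogRemainder}. I then want to bound the whole expression by $\tfrac{(n+1)^2}{n(n+3)}\cdot\tfrac{M_4}{4n^2}$. The factor $(n+1)^2/(n(n+3))$ points to eigenvalue bookkeeping: $n(n+3)$ is the first eigenvalue of $-\Delta_{S^4}$ on $\mathcal H_n$. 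I would therefore compare directly with the entire problem. Take a competitor built from $Q_{n,4}$ on the real line, via $x=(n+1)\tan$-type or $\sin$-type coordinates, that is admissible in~\eqref{eq:weightedEntireMinimum}. This gives a lower bound for $M_4$ of the form $\tfrac{n(n+3)}{(n+1)^2}\cdot 4n^2H(n,S^4)$.

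Equivalently, and more naturally, I would transfer the Euclidean way, as in Theorem~\ref{thm:main}. Build from $\Phi_4$ a kernel on $\R^{d-1}$ or $\R^{d}$ whose multiplier inverts the relevant radial operator at frequencies of size at least one, and pull the spherical zonal source back through a fractional-linear map $t\mapsto$ stereographic coordinate. Under that map the pole $(1-t)^{-1}$ becomes the Newton-type kernel and the logarithm becomes lower order. The dilation by $n$ or $n+1$ matches frequencies, and the eigenvalue mismatch produces the factor $(n+1)^2/(n(n+3))$. Strictness for each $n$ comes from the strict positivity of the discarded logarithmic and error terms.

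For $d=6$ the same scheme applies, with the double pole handled by~\eqref{eq:S6PoleContribution}; there I only expect a non-quantitative strict inequality. The main obstacle is the middle step. I need a transfer map under which the extremal sign function of one problem becomes an admissible source of norm at most one for the other. It must keep the right vanishing moments while contributing the weight exactly, not just asymptotically. The first thing I would try is the Euclidean kernel route modeled on Lemma~\ref{lem:B}, with $1/|x|$ replaced by the pole terms of $G_4$ and $G_6$.
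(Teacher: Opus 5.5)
Your reduction is right. Since Theorem~\ref{thm:BohrLimitsFourSix} already gives the limit $\beta_d$, it suffices to prove $n^2H(n,S^d)<\beta_d$ for each $n$. The natural way is to build, from the degree-$n$ extremal source, an admissible source with gap $mn$ whose potential at the pole, multiplied by $m^2$, exceeds $H(n,S^d)$. But that is exactly the step you leave open, and you have misassigned the roles of the two operations.

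The weight needs no fractional-linear repair. Including the factor $\bigl(|\sin(m\theta)|/(m\sin\theta)\bigr)^{d-1}$ makes the dilated source times $\sin^{d-1}\theta$ equal to $m^{1-d}W(m\theta)$. The gap $mn$ and the bound by one are then automatic.

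The real problem is that plain dilation loses. By the folding identities $\sum_jP_1\bigl(\cos\frac{\theta+2\pi j}{m}\bigr)=m^2P_1(\cos\theta)$ and $\sum_jL\bigl(\cos\frac{\theta+2\pi j}{m}\bigr)=L(\cos\theta)+\mathrm{const}$, the folded $S^4$ kernel is $(P_1+m^{-2}L)/3$ up to a constant. So the dilated sign source gives only $m^2H(mn,S^4)\geq R_n+m^{-2}L_n$, which is smaller than $H(n,S^4)=R_n+L_n$. The logarithmic part of the constant is thrown away, and the comparison fails.

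The missing idea is Lemma~\ref{lem:gapTransfer}. For even $m$ the dilation factor is strictly below one: it is bounded by $\bigl(1-\tfrac c3(1-\cos x)\bigr)^{d-1}\leq\bigl((1+b\cos x)/(1+b)\bigr)^{n+d-1}$. This slack pays for the M\"obius amplitude $\bigl((1+b)/(1+bt)\bigr)^{n+d-1}$ attached to $s_n\bigl((t+b)/(1+bt)\bigr)$. The exponent $n+d-1$ is exactly what turns $h_b\,d\mu_d$ into $C_b(1-bs)^{n-1}s_n(s)\,d\mu_d(s)$, so the vanishing moments survive.

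Polynomial division then gives $\int P_1h_b\,d\mu_4=\rho E_1$ with $\rho=(1+b)/(1-b)\to(n+3)/(n+1)$, and the transformed logarithmic term stays positive. Combined with $L_n\leq R_n/n$, which follows from $A(a)\geq a^n$, this gives $m^2f_{m,n}(e)>\rho R_n>H(n,S^4)$ for $n\geq2$ and even $m\geq4$. Letting $m\to\infty$ yields~\eqref{eq:S4QuantitativeBohr}. Its factor is $\frac{n+1}{n}\cdot\frac{n+1}{n+3}$, coming from these two inequalities. On $S^6$ you additionally need:
the folded double pole;
the identity $\int P_2h_b\,d\mu_6=\rho E_2+\rho(n+1)xE_1$;
the bound $E_2/E_1\geq n+1$;
separate checks at $n=1,2$.

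Your fallback routes do not close this gap. A competitor admissible in the minimization problem~\eqref{eq:weightedEntireMinimum} bounds $M_4$ from \emph{above}. But~\eqref{eq:S4QuantitativeBohr} is equivalent to the lower bound $M_4\geq\frac{n(n+3)}{(n+1)^2}\,4n^2H(n,S^4)$. Such a lower bound has to come from the supremum side, that is, from admissible Bohr sources at larger gaps combined with the known limit. That is precisely what the transfer lemma supplies.

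The Euclidean route modelled on Theorem~\ref{thm:main} relies on the exact $S^3$ correspondence between the zonal harmonics $\sin((\ell+1)\theta)/\sin\theta$ and the radial waves $\sin(\lambda r)/r$ on $\R^3$, which is what makes the multiplier identity exact. You give no analogue for the zonal harmonics of $S^4$ or $S^6$. You also give no Euclidean kernel whose norm is exactly $\beta_d$, and no derivation of the factor $(n+1)^2/(n(n+3))$ from an eigenvalue mismatch. As stated, that part is a hope rather than an argument.
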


We first construct the sources used in the proof. The construction preserves the required moments in every dimension; the subsequent comparison of Green potentials is specific to dimensions four and six.

\begin{lemma}\label{lem:gapTransfer}
Fix integers $d\geq2$ and $n\geq1$, and put $s_n(t)=\sgn q_{n,d}(t)$. For an even integer $m\geq2$, set
\begin{equation}\label{eq:transferParameters}
 \varepsilon=m^{-2},\qquad c=1-\varepsilon,\qquad
 b=\frac{(d-1)c}{3(n+d-1)-(d-1)c}.
\end{equation}
Define
\begin{equation}\label{eq:mobiusSource}
 h_b(t)=\left(\frac{1+b}{1+bt}\right)^{n+d-1}
 s_n\!\left(\frac{t+b}{1+bt}\right),
\end{equation}
and, for $0<\theta<\pi$, define the zonal source
\begin{equation}\label{eq:dilatedSource}
 g_{m,n}(\cos\theta)=h_b(\cos(m\theta))
 \left(\frac{|\sin(m\theta)|}{m\sin\theta}\right)^{d-1}.
\end{equation}
Then $\norm{g_{m,n}}_\infty\leq1$, and $g_{m,n}\perp\mathcal H_{<mn}$ on $S^d$.
\end{lemma}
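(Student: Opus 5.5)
The plan is to prove the two assertions separately. Orthogonality should be an exact algebraic consequence of the moment conditions~\eqref{eq:generalSignMoments}, transported by a dilation and a Möbius change of variable. The sup-norm bound should reduce to a one-variable inequality, and the parameter $b$ is chosen to make the quadratic terms of that inequality cancel. Throughout, write $N=n+d-1$ and work with zonal functions. By the averaging argument in the proof of Theorem~\ref{thm:BohrAllDimensions}, orthogonality to $\mathcal H_{<mn}$ amounts to
\[
 \int_0^\pi g_{m,n}(\cos\theta)P(\cos\theta)\sin^{d-1}\theta\,d\theta=0,\qquad \deg P<mn .
\]

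\emph{Orthogonality.} Substituting~\eqref{eq:dilatedSource}, the integral becomes $m^{1-d}\int_0^\pi h_b(\cos m\theta)\,|\sin m\theta|^{d-1}P(\cos\theta)\,d\theta$. Since $m$ is even, the factor depending on $m\theta$ is even and $2\pi/m$-periodic, so I will extend evenly to $[-\pi,\pi]$. I then replace $P(\cos\theta)$ by its average over the shifts $\theta\mapsto\theta+2\pi k/m$. This average keeps only the frequencies that are multiples of $m$, and those are less than $mn$. It is therefore $R(\cos m\theta)$ with $\deg R<n$. Substituting $\varphi=m\theta$ and folding the $m$ periods back, the condition reduces to
\[
 \int_{-1}^1 h_b(t)R(t)(1-t^2)^{(d-2)/2}\,dt=0,\qquad\deg R<n.
\]
Next I change variables by $u=(t+b)/(1+bt)$. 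The relevant identities are $1+bt=(1-b^2)/(1-bu)$, $1-t^2=(1-b^2)(1-u^2)/(1-bu)^2$ and $dt=(1-b^2)(1-bu)^{-2}\,du$. The factor $(1+bt)^{-N}$ contributes $(1-bu)^{N}$, the weight contributes $(1-bu)^{-(d-2)}$, and the Jacobian contributes $(1-bu)^{-2}$, for a net factor $(1-bu)^{n-1}$. On the other hand, $R((u-b)/(1-bu))=p(u)/(1-bu)^{n-1}$ with $\deg p<n$. The powers of $1-bu$ cancel, leaving a constant multiple of $\int s_n(u)p(u)\,d\mu_d(u)$, which vanishes by~\eqref{eq:generalSignMoments}. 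The exponent $n+d-1$ in~\eqref{eq:mobiusSource} is exactly what makes this cancellation work.

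\emph{Sup norm.} Since $|s_n|\le1$, it suffices to show
\[
 \Bigl(\frac{1+b}{1+b\cos m\theta}\Bigr)^{N}\Bigl(\frac{|\sin m\theta|}{m\sin\theta}\Bigr)^{d-1}\le1 .
\]
By the symmetry $\theta\mapsto\pi-\theta$ (here $m$ is even) I may take $\theta\le\pi/2$. Put $\varphi=m\theta$, reduced modulo $2\pi$ and by reflection to $[0,\pi]$. The first factor depends only on the reduced $\varphi$, while $m\sin\theta$ only grows as one passes to later lobes. So the worst case is the first half-lobe $\varphi\in[0,\pi]$. There I will use $m\sin(\varphi/m)\ge\varphi\,(1-\varphi^2/(6m^2))$; this is where $\varepsilon=m^{-2}$ and $c=1-\varepsilon$ should supply the needed slack. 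The target is then an elementary inequality of the form
\[
 N\log\frac{1+b}{1+b\cos\varphi}+(d-1)\log\frac{\sin\varphi}{\varphi}+(d-1)\bigl(\text{loss from }m\sin(\varphi/m)\text{ versus }\varphi\bigr)\le0,\qquad 0\le\varphi\le\pi .
\]
The choice of $b$ in~\eqref{eq:transferParameters} gives $b/(1+b)=(d-1)c/(3N)$. Hence the $\varphi^2$ coefficient of the first term is $(d-1)c/6$, against $-(d-1)/6$ from the second term. The two quadratic terms cancel up to the margin $(d-1)\varepsilon/6$.

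\emph{Main obstacle.} I expect the hard step to be this global inequality on the whole first lobe, not just near $\varphi=0$. I would first compare derivatives, showing that $\varphi\mapsto N\log(1+b\cos\varphi)$ dominates $(d-1)\log(\varphi/\sin\varphi)$ in derivative. The first has derivative $-Nb\sin\varphi/(1+b\cos\varphi)$ and the second has derivative $(d-1)(1/\varphi-\cot\varphi)$. This would use the bound $1/\varphi-\cot\varphi\ge\varphi/3$ and the bound $\sin\varphi/(1+b\cos\varphi)\le\ldots$, with the slack $c<1$ absorbing the $O(m^{-2})$ discrepancy between $m\sin(\varphi/m)$ and $\varphi$. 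Near $\varphi=\pi$ the sine factor vanishes, which makes that endpoint harmless. If the margin $\varepsilon=m^{-2}$ proves too thin for $\varphi$ near $\pi$, I would treat the interval $[\pi/2,\pi]$ separately, using the crude bounds $\sin\varphi\le\pi-\varphi$ and $(1+b)/(1-b)\le e^{3b}$.
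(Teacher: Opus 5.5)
Your orthogonality argument is correct and is essentially the paper's, with the two steps in the opposite order: the paper applies the fractional-linear substitution first, then notes that $m^{-(d-1)}W(m\theta)$ has only cosine frequencies $\geq mn$. Your reduction of the sup-norm bound to the first half-lobe via $\sin\theta\ge\sin(\varphi/m)$ also matches the paper.

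The gap is the resulting one-variable inequality. You do not prove it: the key bound is left as ``$\le\ldots$''. Both heuristics you offer for it are also wrong.

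\emph{There is no slack.} Including the dilation correction,
\[
 \log\frac{\sin\varphi}{m\sin(\varphi/m)}=-\frac{1-m^{-2}}6\varphi^2+O(\varphi^4)=-\frac c6\varphi^2+O(\varphi^4).
\]
Against $N\log\frac{1+b}{1+b\cos\varphi}=\frac{(d-1)c}6\varphi^2+O(\varphi^4)$, the quadratic terms cancel \emph{exactly}. The margin $(d-1)\varepsilon/6$ you found is entirely consumed by the $m\sin(\varphi/m)$ correction. Likewise, once that correction is included, your derivative comparison is an equality at first order. So the inequality is tight to second order at $\varphi=0$ for all $d,n,m$. Any bounds you use must be exact at that order, and positivity must come from the quartic terms, uniformly in the parameters. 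The ``slack $c<1$ absorbing the $O(m^{-2})$ discrepancy'' leaves nothing over.

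\emph{The fallback on $[\pi/2,\pi]$ fails, because $b$ is not small.} For $d=2$, $n=1$ one has $b=c/(6-c)\ge1/7$. At $\varphi=\pi/2$ your crude bound $\bigl(\frac{1+b}{1-b}\bigr)^2\ge16/9$ is multiplied by $\frac{\pi/2}{m\sin(\pi/(2m))}\ge1$, so the product exceeds one. For $n=1$ and $d\to\infty$, $b\to c/(3-c)\ge1/3$, and the bound grows exponentially in $d$.

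The missing idea is an exact identity that handles the dilation globally on $[0,\pi]$:
\[
 \frac{\sin x}{m\sin(x/m)}=\frac1m\sum_{j=0}^{m-1}\cos(a_jx),\qquad a_j=1-\frac{2j+1}{m},\qquad \frac1m\sum_{j=0}^{m-1}a_j^2=\frac{1-m^{-2}}3=\frac c3 .
\]
This exact second moment is where $c=1-m^{-2}$ comes from; it is not slack.

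Concavity of sine on $[0,\pi/2]$ gives $\cos(ax)\le1-a^2(1-\cos x)$ for $|a|\le1$ and $0\le x\le\pi$. Averaging over $j$, with $u=1-\cos x$,
\[
 \frac{\sin x}{m\sin(x/m)}\le1-\frac c3u .
\]
Bernoulli's inequality with exponent $N/(d-1)\ge1$ then gives
\[
 \Bigl(1-\frac c3u\Bigr)^{d-1}\le\Bigl(1-\frac{(d-1)c}{3N}u\Bigr)^N=\Bigl(\frac{1+b\cos x}{1+b}\Bigr)^N,
\]
which is exactly the reciprocal of the amplitude in $h_b$. This closes the sup-norm bound with no Taylor expansion and no case split.
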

\begin{proof}
First we check the moments before dilation. Under the substitution $t=(s-b)/(1-bs)$, formula~\eqref{eq:mobiusSource} gives
\[
 h_b(t)\,d\mu_d(t)
 =C_b(1-bs)^{n-1}s_n(s)\,d\mu_d(s),\qquad
 C_b=\frac{(1+b)^{d/2}}{(1-b)^{n+d/2-1}}.
\]
For a polynomial $p$ of degree less than $n$, the expression $(1-bs)^{n-1}p((s-b)/(1-bs))$ is again a polynomial of degree less than $n$. Thus~\eqref{eq:generalSignMoments} shows that $h_b$ has gap $n$. Equivalently, the even periodic function $W(\theta)=h_b(\cos\theta)|\sin\theta|^{d-1}$ has no cosine frequencies below $n$. Multiplication of~\eqref{eq:dilatedSource} by $\sin^{d-1}\theta$ gives $m^{-(d-1)}W(m\theta)$, whose frequencies are at least $mn$. Since every polynomial of degree less than $mn$ in $\cos\theta$ is a cosine polynomial of that degree, this proves the spherical gap.

The function $h_b$ itself need not be bounded by one; its increase is compensated by the dilation factor. Put $x=\arccos(\cos(m\theta))$. Because $m$ is even, $x/m\leq\theta\leq\pi-x/m$, and hence $\sin\theta\geq\sin(x/m)$. The finite cosine identity gives
\[
 \frac{\sin x}{m\sin(x/m)}
 =\frac1m\sum_{j=0}^{m-1}\cos\left(\left(1-\frac{2j+1}{m}\right)x\right)
 \leq1-\frac c3(1-\cos x).
\]
Indeed, concavity of sine on $[0,\pi/2]$ implies $\cos(ax)\leq1-a^2(1-\cos x)$ when $|a|\leq1$ and $0\leq x\leq\pi$, and the average of the squared coefficients in this sum is $c/3$. Write $N=n+d-1$ and $u=1-\cos x$. Bernoulli's inequality, with exponent $N/(d-1)\geq1$, now yields
\[
 \left(1-\frac{cu}{3}\right)^{d-1}
 \leq\left(1-\frac{(d-1)cu}{3N}\right)^N
 =\left(\frac{1+b\cos x}{1+b}\right)^N.
\]
This is precisely the reciprocal of the amplitude in~\eqref{eq:mobiusSource}. The product in~\eqref{eq:dilatedSource} is therefore bounded by one. Endpoint values, which do not affect the source as an $L^\infty$ function, may be assigned arbitrarily.
\end{proof}

Let $f_{m,n}$ be the mean-zero Green potential of this source. Lemma~\ref{lem:gapTransfer} and the sharp kernel estimate give
\begin{equation}\label{eq:transferLowerBound}
 H(mn,S^d)\geq f_{m,n}(e).
\end{equation}
As in the proof of Theorem~\ref{thm:BohrAllDimensions}, heat regularization justifies this lower bound in the smooth class. It remains to evaluate the right-hand side without losing the lower-order terms of the Green kernel.

Periodic folding in the angular variable gives
\begin{equation}\label{eq:foldedGreen}
 \begin{aligned}
 m^2f_{m,n}(e)&=\int_{-1}^1 F_{m,d}(t)h_b(t)\,d\mu_d(t),\\
 F_{m,d}(\cos\theta)&=m^{2-d}\sum_{j=0}^{m-1}
 G_d\!\left(\cos\frac{\theta+2\pi j}{m}\right).
 \end{aligned}
\end{equation}
To see the factor, write the potential as half the integral over a full period, insert~\eqref{eq:dilatedSource}, and put $u=m\theta$. The resulting integral covers $m$ periods; the Jacobian, the dilation factor, and multiplication by $m^2$ together give $m^{2-d}$.

For brevity put $P_j(t)=(1-t)^{-j}$ and $L(t)=-\log((1-t)/2)$. The sums needed in~\eqref{eq:foldedGreen} are
\begin{equation}\label{eq:foldedElementaryKernels}
 \begin{aligned}
 \sum_{j=0}^{m-1}P_1\!\left(\cos\frac{\theta+2\pi j}{m}\right)
 &=m^2P_1(\cos\theta),\\
 \sum_{j=0}^{m-1}P_2\!\left(\cos\frac{\theta+2\pi j}{m}\right)
 &=m^4P_2(\cos\theta)-\frac{m^2(m^2-1)}3P_1(\cos\theta),\\
 \sum_{j=0}^{m-1}L\!\left(\cos\frac{\theta+2\pi j}{m}\right)
 &=L(\cos\theta)+(m-1)\log4.
 \end{aligned}
\end{equation}
The first identity follows by differentiating the cotangent sum, and the last follows from the product formula for sines. To obtain the second, differentiate the first twice and use $\partial_\theta^2P_1(\cos\theta)=3P_2(\cos\theta)-P_1(\cos\theta)$. The constant terms in the folded kernels disappear against $h_b$, whose mean is zero.

We shall also use one comparison of interpolation errors. In either dimension write $A(t)=q_{n,d}(t)/q_{n,d}(1)$ and set
\[
 E_j=\int_{-1}^1P_j(t)s_n(t)\,d\mu_d(t),\qquad
 E_L=\int_{-1}^1L(t)s_n(t)\,d\mu_d(t).
\]
These quantities are positive whenever the corresponding pole is integrable. Formulas~\eqref{eq:generalResolventRemainder} and~\eqref{eq:generalLogRemainder}, followed by pairing $t$ and $-t$, give
\[
 E_1=\int_{-1}^1\frac{|A(t)|}{1-t^2}\,d\mu_d(t),\qquad
 E_L=\int_{-1}^1|A(t)|\int_1^\infty
 \frac{a\,da}{(a^2-t^2)A(a)}\,d\mu_d(t).
\]
For $a\geq1$, root factorization and symmetry give $A(a)\geq a^n$. Also $a(1-t^2)/(a^2-t^2)\leq1/a$. Consequently
\begin{equation}\label{eq:logPoleComparison}
 E_L\leq E_1\int_1^\infty\frac{da}{aA(a)}\leq\frac{E_1}{n}.
\end{equation}

\begin{proof}[Proof of Theorem~\ref{thm:evenBohrUniform}]
We first treat $S^4$. Put $R_n=E_1/3$ and $L_n=E_L/3$. From~\eqref{eq:S4Green} and~\eqref{eq:logPoleComparison},
\begin{equation}\label{eq:S4PoleLogComparison}
 H(n,S^4)=R_n+L_n,\qquad
 R_n=\frac14\int_{-1}^1|A(t)|\,dt,\qquad
 0<L_n\leq\frac{R_n}{n}.
\end{equation}
The parameters in~\eqref{eq:transferParameters} become $b=(1-\varepsilon)/(n+2+\varepsilon)$. Set $\rho=(1+b)/(1-b)=(n+3)/(n+1+2\varepsilon)$. Under the fractional-linear substitution used above, polynomial division gives
\[
 \int_{-1}^1P_1(t)h_b(t)\,d\mu_4(t)=\rho E_1.
\]
Indeed, modulo a polynomial of degree less than $n$, the transformed numerator satisfies $(1-bs)^n/(1-s)=(1-b)^n/(1-s)$. The omitted polynomial integrates to zero against $s_n$.

The transformed logarithmic integral is positive as well. Apart from a polynomial of degree less than $n$, its kernel after substitution is
\[
 (1-bs)^{n-1}\log\frac{1-bs}{1-s}.
\]
Its $n$th derivative is
\[
 n!\int_b^1\frac{(a-b)^{n-1}}{(1-as)^{n+1}}\,da>0,
 \qquad -1<s<1.
\]
This follows by integrating $s/(1-as)$ in $a$ and differentiating after polynomial division. Interpolation at the zeros of $q_{n,4}$ therefore has remainder sign $s_n$, and the moment conditions give $\int Lh_b\,d\mu_4>0$.

The complete folded kernel from~\eqref{eq:foldedElementaryKernels} is $(P_1+\varepsilon L)/3$, up to a constant. Hence
\begin{equation}\label{eq:S4TransferredPotential}
 m^2f_{m,n}(e)=\rho R_n+\frac{\varepsilon}{3}\int_{-1}^1L(t)h_b(t)\,d\mu_4(t)>\rho R_n.
\end{equation}
For $n\geq2$ and even $m\geq4$, we have $\varepsilon\leq1/16$ and
\[
 \rho-\left(1+\frac1n\right)
 =\frac{n-1-2\varepsilon(n+1)}{n(n+1+2\varepsilon)}>0.
\]
Together with~\eqref{eq:S4PoleLogComparison}, this proves $m^2f_{m,n}(e)>H(n,S^4)$. For $n=1$, use $R_1=1/4$ and $H(1,S^4)=1/6+(\log2)/3<2/5$, whereas $\rho R_1\geq8/17>2/5$. Thus
\[
 m^2H(mn,S^4)>H(n,S^4)
\]
for every positive integer $n$ and every even integer $m\geq4$.

To obtain the quantitative estimate, fix $n$ and let $m$ tend to infinity through even integers in~\eqref{eq:S4TransferredPotential}. The functions $h_b$ stay uniformly bounded as $b\to1/(n+2)$, so the logarithmic term tends to zero. The already established limit~\eqref{eq:BohrLimitsFourSix} and~\eqref{eq:transferLowerBound} yield
\[
 \beta_4\geq n^2\frac{n+3}{n+1}R_n.
\]
Combining this with~\eqref{eq:S4PoleLogComparison} proves~\eqref{eq:S4QuantitativeBohr}. For $n\geq2$ its factor is strictly less than one. For $n=1$ the last displayed bound gives $\beta_4\geq1/2>H(1,S^4)$. Thus every finite rescaled constant is strictly below the limit, and their supremum equals that limit.

We now turn to $S^6$. Its kernel~\eqref{eq:S6Green} gives
\begin{equation}\label{eq:S6ErrorDecomposition}
 H(n,S^6)=\frac{E_2}{15}+\frac{E_1}{5}+\frac{E_L}{5}.
\end{equation}
In this case $b=5(1-\varepsilon)/(3(n+5)-5(1-\varepsilon))$, and the folded kernel is
\[
 F_{m,6}=\frac{P_2}{15}+\frac{10\varepsilon-1}{45}P_1
 +\frac{\varepsilon^2}{5}L+\text{constant}.
\]
For fixed $n$, let $m$ tend to infinity through even integers and put
\[
 b\longrightarrow\frac5{3n+10},\qquad
 x=\frac{b}{1-b}\longrightarrow\frac5{3n+5},\qquad
 \rho=\frac{1+b}{1-b}=1+2x.
\]
The same polynomial division, now keeping two pole terms, gives
\[
 \int P_1h_b\,d\mu_6=\rho^2E_1,\qquad
 \int P_2h_b\,d\mu_6=\rho E_2+\rho(n+1)xE_1.
\]
For the second identity expand $(1-bs)^{n+1}/(1-s)^2$ at $s=1$; the remainder is a polynomial of degree less than $n$ and disappears against $s_n$. Since $h_b$ remains uniformly bounded for fixed $n$, passage to the limit in~\eqref{eq:foldedGreen} is justified. We obtain
\begin{equation}\label{eq:S6TransferredLimit}
 \begin{aligned}
 \lim_{\substack{m\to\infty\\ m\ \mathrm{even}}}m^2f_{m,n}(e)
 &=F_n,\\
 F_n&=\frac{\rho E_2}{15}
 +\left(\frac{\rho(n+1)x}{15}-\frac{\rho^2}{45}\right)E_1,
 \qquad x=\frac5{3n+5}.
 \end{aligned}
\end{equation}
We shall show that $F_n>H(n,S^6)$ for every $n$.

Let $\lambda=A'(1)$. Root factorization gives $\lambda\geq n$. The simple- and double-pole interpolation remainders used in~\eqref{eq:S6PoleContribution} give
\[
 \begin{aligned}
 E_1&=\frac{15}{16}\int_{-1}^1|A(t)|(1-t^2)\,dt,\\
 E_2&=\frac{15}{16}\int_{-1}^1|A(t)|
 \bigl(1+t^2+\lambda(1-t^2)\bigr)\,dt.
 \end{aligned}
\]
Consequently $E_2/E_1\geq n+1$, while~\eqref{eq:logPoleComparison} gives $E_L/E_1\leq1/n$. Since $\rho>1$, these inequalities have the required direction in the difference between~\eqref{eq:S6TransferredLimit} and~\eqref{eq:S6ErrorDecomposition}. For $n\geq3$ they yield
\[
 \frac{45(F_n-H(n,S^6))}{E_1}
 \geq\frac{45n^3+69n^2-345n-225}{n(3n+5)^2}>0.
\]
The numerator, with $k=n-3$, is $45k^3+474k^2+1284k+576$, proving its positivity throughout this range.

The first two degrees require slightly better pole comparisons. For $n=1$, $A(t)=t$ and the displayed integral formulas give $E_2/E_1=4$. Using this value and $E_L/E_1\leq1$ gives $45(F_1-H(1,S^6))/E_1\geq3/8>0$.

For $n=2$, write $A(t)=(t^2-r^2)/(1-r^2)$. The sign moment gives $\int_0^r(1-t^2)^2\,dt=4/15$. At $r=1/3$ the integral is $376/1215>4/15$, so $r<1/3$. Put
\[
 u=\frac{\int_{-1}^1t^2|A(t)|\,dt}{\int_{-1}^1|A(t)|\,dt}.
\]
Bounding the numerator below by its signed integral and the denominator above by the integral of $(t^2+r^2)/(1-r^2)$ gives
\[
 u\geq\frac{1/5-r^2/3}{1/3+r^2}\geq\frac{11}{30}>\frac13.
\]
Thus $E_2/E_1=\lambda+(1+u)/(1-u)\geq4$. Together with $E_L/E_1\leq1/2$, this gives $45(F_2-H(2,S^6))/E_1\geq381/242>0$. The strict comparison is proved for every $n$.

Finally,~\eqref{eq:transferLowerBound},~\eqref{eq:S6TransferredLimit}, and the limit already established in~\eqref{eq:BohrLimitsFourSix} imply
\[
 \beta_6\geq n^2F_n>n^2H(n,S^6)
\]
for every fixed $n$. Taking the supremum and using convergence to $\beta_6$ proves~\eqref{eq:evenBohrSupremum} for $d=6$, completing the proof.
\end{proof}

The comparison concerns each finite value and the limiting coefficient. It does not establish monotonicity between consecutive values of $n^2H(n,S^4)$ or $n^2H(n,S^6)$. For other dimensions, Theorem~\ref{thm:BohrAllDimensions} still gives the exact constant at every fixed $n$, and Proposition~\ref{prop:weightedEndpointConvergence} identifies the limiting extremal polynomial problem. Determining the Bohr limit and comparing it with the supremum in those dimensions require further work.

\section{The Favard constant on \texorpdfstring{$S^2$}{S2}}\label{sec:favardS2}

We now turn to best uniform approximation of functions on $S^2$. It is easy to see that
\begin{equation}\label{eq:S2FavardUpperBound}
 K(n,S^2)\leq H(n,S^2)
\end{equation}
for every positive integer $n$. Indeed, let $f$ be a smooth function whose Laplacian is bounded in $L^\infty$ by one. Subtracting its mean does not change the approximation error, so we may assume that $f$ has mean zero. Let $p_{n-1}$ be the best $L^1(dt/2)$ approximant to $G_2$ found in Section~\ref{sec:zonalapproximation}. Convolving $-\Delta f$ with this polynomial gives a spherical polynomial $t_{n-1}\in\mathcal H_{<n}$. The Green representation and the kernel estimate~\eqref{eq:kernelApproximationBound} yield
\[
 E_{n-1}(f)_{L^\infty(S^2)}
 \leq\norm{f-t_{n-1}}_\infty
 \leq\frac12\int_{-1}^1|G_2(t)-p_{n-1}(t)|\,dt
 =H(n,S^2).
\]
Taking the supremum over $f$ proves~\eqref{eq:S2FavardUpperBound}.

We have not determined the exact values of $K(n,S^2)$ for $n\geq2$. A lower bound, however, is provided by a simple family of measures: take $2n$ equally spaced points on an equator and give them alternating signs. The corresponding test functions are spherical analogues of the Euler spline, with the negative Laplacian equal alternately to $1$ and $-1$ in successive meridional sectors. Although we do not know whether these functions are extremal for the full approximation problem, they give the following asymptotic estimate:

\begin{equation}\label{eq:equatorialFavardLowerBound}
 \liminf_{n\to\infty}n^2K(n,S^2)\geq\frac{\pi^2}{8}.
\end{equation}

For the proof, we construct an admissible measure, find the sign of its Green potential, and use that sign as the negative Laplacian of a test function. The resulting lower bound can then be evaluated asymptotically.

Fix a positive integer $n$. For $j=0,\ldots,2n-1$, put $x_j=(\cos(j\pi/n),\sin(j\pi/n),0)$ and define
\[
 \nu_n=\frac1{2n}\sum_{j=0}^{2n-1}(-1)^j\delta_{x_j}.
\]
This measure has total variation one and annihilates $\mathcal H_{<n}$. Indeed, restriction to the equator turns a spherical polynomial of degree less than $n$ into a trigonometric polynomial of the same degree, and the alternating sum annihilates all its frequencies. It follows that the absolute value of integration against $\nu_n$ is a lower bound for the best approximation error.

Use longitude $\varphi$ and height $z$ on the sphere, so that normalized surface measure is $dz\,d\varphi/(4\pi)$. Put $q(z)=\exp(-\operatorname{arctanh}|z|)$ and $a=q(z)^n$. The Fourier expansion of the logarithm, followed by the alternating sum, gives the Green potential
\[
 U_{\nu_n}(z,\varphi)
 =\int_{S^2}G_2(x\cdot y)\,d\nu_n(x)
 =\frac1{2n}\log\frac{1+2a\cos(n\varphi)+a^2}{1-2a\cos(n\varphi)+a^2}.
\]
The formula holds away from the atoms, which do not affect the integral. Its sign is $h_n(\varphi)=\sgn\cos(n\varphi)$. In particular, $h_n$ has mean zero and is bounded by one.

Let $f_n$ be the mean-zero Green potential of $h_n$. This is a continuous function, and $-\Delta f_n=h_n$ in the weak sense. Symmetry of the Green kernel and Fubini's theorem give
\[
 \int_{S^2}f_n\,d\nu_n
 =\int_{S^2}h_nU_{\nu_n}\,d\sigma_2
 =\norm{U_{\nu_n}}_{L^1(S^2)}=:L_n.
\]
Thus $E_{n-1}(f_n)_{L^\infty(S^2)}\geq L_n$. To remain within the smooth class used in our definition, apply heat regularization to $f_n$. It preserves zero mean and the bound on its Laplacian, and converges uniformly to $f_n$. Consequently $K(n,S^2)\geq L_n$ also with that definition.

It remains to compute the asymptotic size of $L_n$. Expanding the logarithm and integrating in longitude against $\sgn\cos(n\varphi)$ yields
\begin{equation}\label{eq:equatorialLowerIntegral}
 L_n=\frac4{\pi n}\sum_{j=0}^\infty\frac{(-1)^j}{(2j+1)^2}
 \int_0^1q(z)^{n(2j+1)}\,dz.
\end{equation}
The series is absolutely integrable. For each positive odd integer $k$, the substitution $v=n\operatorname{arctanh}z$ gives
\[
 n\int_0^1q(z)^{nk}\,dz
 =\int_0^\infty\frac{e^{-kv}}{\cosh^2(v/n)}\,dv
 \longrightarrow\frac1k.
\]
The integral is bounded by $1/k$, so the summable series with terms $(2j+1)^{-3}$ justifies passage to the limit in~\eqref{eq:equatorialLowerIntegral}. We obtain
\[
 \lim_{n\to\infty}n^2L_n
 =\frac4\pi\sum_{j=0}^\infty\frac{(-1)^j}{(2j+1)^3}
 =\frac{\pi^2}{8}.
\]
The last value is the classical second-order Favard constant in~\eqref{eq:FavardConstants}. Since $K(n,S^2)\geq L_n$, this proves~\eqref{eq:equatorialFavardLowerBound}.

The functions used in this proof have a direct geometric description. In spherical coordinates, with colatitude $\theta$ and longitude $\varphi$, they are the mean-zero solutions of
\begin{equation}\label{eq:sphericalEulerSpline}
 -\Delta f_n(\theta,\varphi)=\sgn\cos(n\varphi).
\end{equation}
Their negative Laplacian is constant in each of the $2n$ equal meridional sectors and changes sign from one sector to the next. Odd reflection across the bounding meridians shows that $f_n$ vanishes there; rotational symmetry and continuity also force it to vanish at the poles. The maximum principle gives alternating positive and negative lobes. The points carrying $\nu_n$ lie at the centers of the equatorial arcs, where $f_n(x_j)=(-1)^jL_n$ by rotational symmetry and the pairing computed above. These are spherical analogues of the Euler spline used to obtain a lower bound; their extremality for the full class is not asserted. Smooth regularizations provide the admissible smooth test functions.

Figure~\ref{fig:sphericalEulerSpline} shows the case $n=3$. The level curves were computed from the cosine expansion of the solution of~\eqref{eq:sphericalEulerSpline}; the values are multiplied by $n^2$ for readability. In this case $L_3\approx0.117783035656$, and the alternating equatorial values of $g_3=9f_3$ are approximately $\pm1.060047321$.

\begin{figure}[htbp]
\centering
\includegraphics[width=7.83cm]{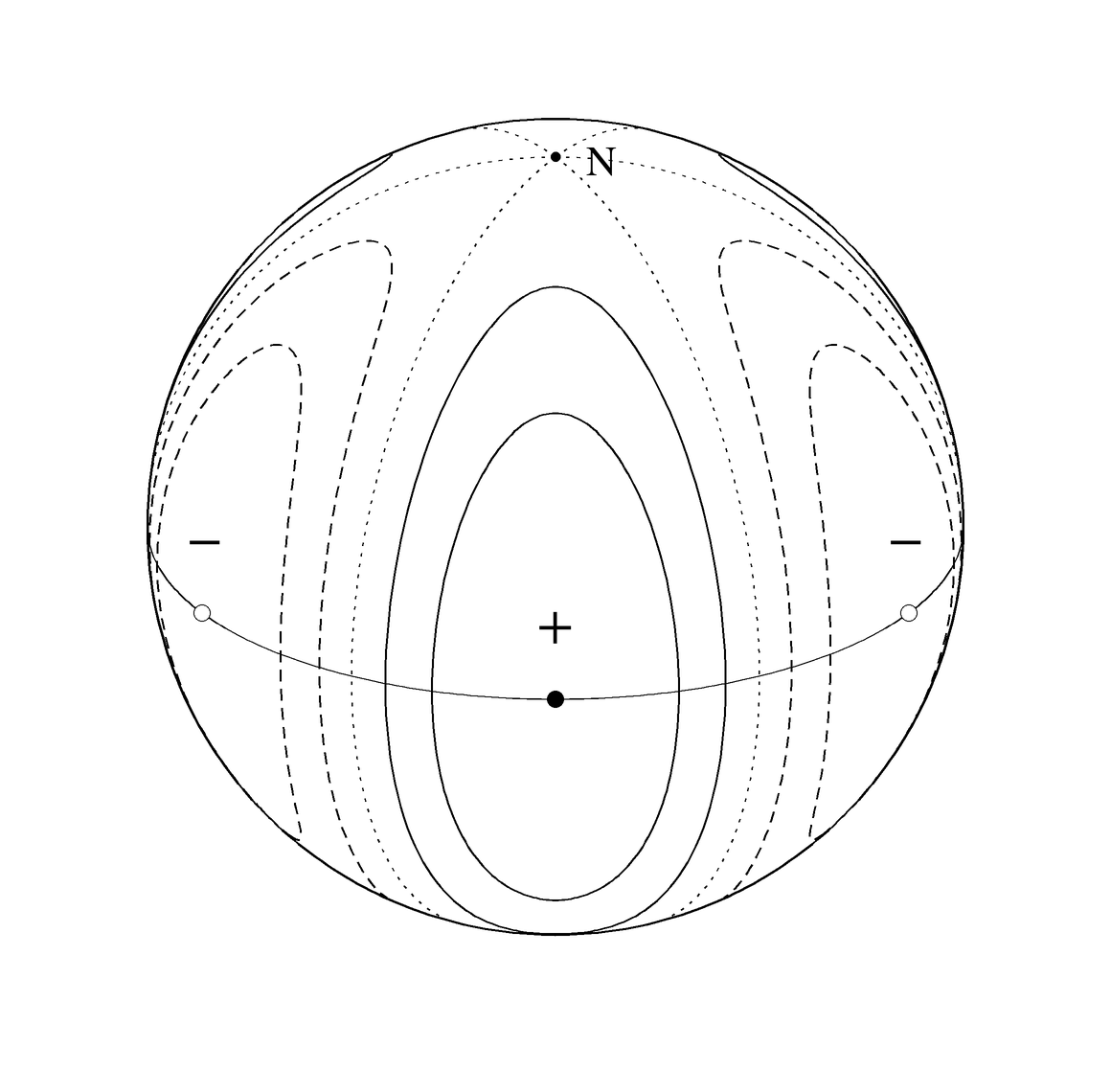}
\caption{The spherical Euler-spline analogue for $n=3$. Level curves of $g_3$ on the visible hemisphere, at levels $\pm0.35$ and $\pm0.70$. Positive levels are solid, negative levels dashed, and zero levels dotted.}
\label{fig:sphericalEulerSpline}
\end{figure}

As for an upper bound, no linear approximation method can improve on $H(n,S^2)$ uniformly over the whole class. The method obtained by interpolating the Green kernel is already optimal among all linear operators taking values in $\mathcal H_{<n}$. This follows from rotational symmetry.

\begin{theorem}\label{thm:optimalLinearS2}
For each positive integer $n$,
\begin{equation}\label{eq:optimalLinearS2}
 \inf_A\sup\left\{\norm{f-Af}_\infty:\;\norm{\Delta f}_\infty\leq1\right\}=H(n,S^2),
\end{equation}
where the supremum is taken over smooth complex-valued functions on $S^2$ and the infimum over linear operators $A:C^\infty(S^2)\to\mathcal H_{<n}$. The infimum is attained by the method obtained from the best $L^1$ approximant to $G_2$, with constants reproduced exactly.
\end{theorem}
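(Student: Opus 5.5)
The proof has two halves. For the upper bound (attainment), I would use the operator already constructed before~\eqref{eq:S2FavardUpperBound}: for smooth $f$, set $Af=\int f\,d\sigma_2+t_{n-1}$, where $t_{n-1}(x)=\int_{S^2}p_{n-1}(x\cdot y)(-\Delta f)(y)\,d\sigma_2(y)$ and $p_{n-1}$ is the interpolant of $G_2$ at the zeros of $U_n$. This map is linear with values in $\mathcal H_{<n}$. The Green representation together with~\eqref{eq:kernelApproximationBound} and~\eqref{eq:zonalBestApproximation} gives $\norm{f-Af}_\infty\leq H(n,S^2)$ whenever $\norm{\Delta f}_\infty\leq1$. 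If $f\in\mathcal H_{<n}$, then $f-Af$ is the Green potential of $-\Delta f$ against $G_2-p_{n-1}$. Its harmonic multipliers in degrees $\ell<n$ vanish by~\eqref{eq:S2zonal}, because $p_{n-1}$ reproduces the Legendre coefficients $\int G_2P_\ell\,dt=2/(\ell(\ell+1))$ of $G_2$ for $\ell<n$ only if we choose it that way. That is not automatic for an $L^1$ approximant. To get exact reproduction, I would instead define $Af$ by replacing the kernel with $p_{n-1}+\sum_{1\le\ell<n}c_\ell P_\ell$ chosen so that its low coefficients equal those of $G_2$. However, this changes the $L^1$ error, so I expect the cleaner route is to note that the remainder $G_2-p_{n-1}$ already has the sign $s_n$, and then check the low Legendre moments directly. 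This is the point I would verify first. If it fails, I would settle for the weaker statement that the infimum is attained by the kernel method, with reproduction of constants.

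For the lower bound, let $A$ be any linear operator into $\mathcal H_{<n}$ with constant $\gamma$ on the class. The plan is to symmetrize. Average $A$ over rotations, $\tilde A f=\int_{SO(3)}R^{-1}A(Rf)\,dR$. This keeps values in $\mathcal H_{<n}$ because that space is rotation invariant, commutes with rotations, and does not increase $\sup\norm{f-Af}_\infty$, since the class and the sup norm are rotation invariant. Some care is needed with measurability and with the lack of continuity assumptions on $A$. I would handle this by restricting $A$ to the finite-dimensional spaces of spherical polynomials of degree at most $N$, where everything is continuous, and letting $N\to\infty$ at the end. On these spaces, by Schur's lemma, an invariant operator into $\mathcal H_{<n}$ acts on $\mathcal H_\ell$ as a scalar $\lambda_\ell$ for $\ell<n$ and as zero for $\ell\geq n$.

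Writing $g=-\Delta f$ and assuming mean zero, where the constant component contributes only the scalar $\lambda_0$ and can be absorbed, we get $f-\tilde Af=\int K(x\cdot y)g(y)\,d\sigma_2(y)$ with zonal $K=G_2-p$. Here $p=\sum_{1\le\ell<n}\lambda_\ell\,\ell(\ell+1)(2\ell+1)P_\ell/\ldots$ is a polynomial of degree less than $n$. Evaluating at the pole and taking $g$ to be a heat-regularized $\sgn K(e\cdot\omega)$ restricted to polynomials of high degree shows that $\gamma\geq\frac12\int_{-1}^1|G_2-p|\,dt\geq H(n,S^2)$ by~\eqref{eq:zonalBestApproximation}. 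This requires no gap condition, because here we are bounding the error of a specific operator rather than the Bohr class. The main obstacle I expect is making the averaging rigorous for a general, possibly discontinuous, linear $A$ on the full class. The finite-dimensional truncation is how I would try to circumvent it, testing only against polynomial $f$, whose Laplacians can approximate the sign functions in the required integrated sense.
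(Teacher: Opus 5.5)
Your upper bound is the paper's: the operator $A_*f=\bar f+\int_{S^2}p_{n-1}(x\cdot y)(-\Delta f)(y)\,d\sigma_2(y)$ together with the kernel bound \eqref{eq:kernelApproximationBound}. Your worry about exact reproduction comes from a misreading. The theorem only says that constants are reproduced, which is what the $\bar f$ term does. The check you planned to "verify first" actually fails, so drop that detour. For $n=2$ the interpolant of $G_2$ at $\pm\tfrac12$ has slope $G_2(\tfrac12)-G_2(-\tfrac12)=\log3$. Hence $A_*$ multiplies $\mathcal H_1$ by $2\cdot\tfrac12\int_{-1}^1p_1(t)\,t\,dt=\tfrac23\log3\neq1$. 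Correcting the low Legendre coefficients would change the $L^1$ error and forfeit optimality, as you suspected. Your fallback is exactly the statement to be proved.

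The genuine problem is the last step of your lower bound. The source $g=\sgn K(e\cdot\omega)$, or any regularization of it, is in general not a negative Laplacian, because $-\Delta f$ always has mean zero. What you can actually test is the supremum of $|\int K(e\cdot y)g(y)\,d\sigma_2(y)|$ over $\norm{g}_\infty\le1$ with $\int g=0$. That supremum is the distance of $K$ to the constants, $\inf_c\frac12\int_{-1}^1|G_2-p-c|\,dt$, and not $\frac12\int_{-1}^1|G_2-p|\,dt$.

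Your inequality $\gamma\geq\frac12\int|G_2-p|\,dt$ is false as stated. For $n=1$ your $p$ is the empty sum, so it would give $\gamma\geq\frac12\int_{-1}^1|G_2|\,dt=2/e$ for every method. But the method $A_*f=\bar f$ has error at most $H(1,S^2)=\log2<2/e$.

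The repair is short. Since $p+c$ still has degree less than $n$, \eqref{eq:zonalBestApproximation} still gives a bound $\geq H(n,S^2)$. You must take $c$ minimizing the $L^1$ distance, so that the unimodular function $\overline{(K-c)}/|K-c|$ has mean zero. (Incidentally, the coefficient of $P_\ell$ in $p$ is $(2\ell+1)\lambda_\ell/(\ell(\ell+1))$.) Also, heat regularization does not produce polynomials, and truncating a harmonic expansion destroys the bound $\norm{g}_\infty\le1$. Inside your truncation scheme you need a positive polynomial kernel such as $a_m$ from the proof of Theorem~\ref{thm:sphere2}.

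The paper avoids both issues, and Schur's lemma as well, by testing the averaged operator only on $f\perp\mathcal H_{<n}$. Commutation with $\Delta$ and the distinct eigenvalues give $\widetilde Af=0$ there. So the error is $\norm{f}_\infty$, and $M\geq H(n,S^2)$ is just the definition \eqref{eq:sphericalBohrFavard}; the best-approximation property of $p_{n-1}$ is needed only for the upper bound. In your framework this amounts to choosing $g\perp\mathcal H_{<n}$, so that $p$ drops out.

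For continuity, the paper uses finiteness of $M$: $A$ must reproduce constants, and scaling gives $\norm{Af}_\infty\le\norm{f}_\infty+M\norm{\Delta f}_\infty$, so the Haar average is defined on all of $C^\infty(S^2)$. Your restriction to the finite-dimensional rotation-invariant spaces of polynomials of degree at most $N$ is an equally valid alternative that needs no continuity at all. The averages are compatible in $N$, because rotations preserve each such space, so the $\lambda_\ell$ do not depend on $N$. The polynomial pairs $g_m,f_m$ from Theorem~\ref{thm:sphere2} then lie in these spaces and finish the argument.
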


\begin{proof}
We first show that an arbitrary linear method cannot have a smaller error. Let $M$ be the supremum in~\eqref{eq:optimalLinearS2} for a fixed operator $A$. We may assume that $M$ is finite. Then $A$ reproduces constants, since all their scalar multiples belong to the class. By scaling, $\norm{f-Af}_\infty\leq M\norm{\Delta f}_\infty$ for every smooth $f$. In particular, $\norm{Af}_\infty\leq\norm{f}_\infty+M\norm{\Delta f}_\infty$, so $A$ is continuous for the $C^2$ topology and the following average is well defined.

For a rotation $Q\in SO(3)$, write $U_Qf(x)=f(Q^{-1}x)$, and average the conjugate operators with respect to normalized Haar measure:
\[
 \widetilde A f=\int_{SO(3)}U_Q^{-1}AU_Qf\,dQ.
\]
Rotations preserve the class, the uniform norm, and $\mathcal H_{<n}$. Each conjugate therefore has the same maximal error $M$, and the triangle inequality gives $\norm{f-\widetilde Af}_\infty\leq M$ throughout the class. Thus averaging does not increase the maximal error.

The averaged operator commutes with rotations. Differentiating along the three one-parameter groups of coordinate rotations shows that it also commutes with $\Delta$, the sum of the squares of their infinitesimal generators. If $Y\in\mathcal H_\ell$ with $\ell\geq n$, then $\widetilde AY$ must have the same Laplacian eigenvalue $-\ell(\ell+1)$ as $Y$. But $\widetilde AY$ belongs to $\mathcal H_{<n}$, where that eigenvalue does not occur, so $\widetilde AY=0$. The spherical harmonic expansion of a smooth function converges in $C^2$; continuity therefore gives $\widetilde Af=0$ whenever $f\perp\mathcal H_{<n}$. On this subclass the approximation error is exactly $\norm{f}_\infty$. Taking its supremum gives $M\geq H(n,S^2)$ by definition~\eqref{eq:sphericalBohrFavard}.

To attain this bound, let $p_{n-1}$ be the best $L^1(dt/2)$ approximant to $G_2$ constructed in Section~\ref{sec:zonalapproximation}, and set
\[
 A_*f(x)=\bar f+\int_{S^2}p_{n-1}(x\cdot y)(-\Delta f)(y)\,d\sigma_2(y),
 \qquad \bar f=\int_{S^2}f\,d\sigma_2.
\]
This is a linear operator with values in $\mathcal H_{<n}$, and it reproduces constants. Applying~\eqref{eq:kernelApproximationBound} to $f-\bar f$ gives $\norm{f-A_*f}_\infty\leq H(n,S^2)$ throughout the class. Together with the lower bound just proved, this establishes~\eqref{eq:optimalLinearS2}.
\end{proof}

Thus, for each positive integer $n$, the polynomial interpolating the Green kernel $G_2$ at the zeros of $U_n$ gives an optimal linear approximation method among all linear methods with values in $\mathcal H_{<n}$. Its exact maximal error on the class is $H(n,S^2)$, and it yields the Favard-type upper bound~\eqref{eq:S2FavardUpperBound} for each $n$. Any improvement on this bound for the full approximation class must use a choice of the approximating polynomial that depends nonlinearly on the function.

Nevertheless, it is somewhat surprising that best approximation gives a strictly smaller constant at every degree $n\geq2$.

\begin{theorem}\label{thm:strictFavardS2}
For each integer $n\geq2$,
\begin{equation}\label{eq:strictFavardS2}
 K(n,S^2)<H(n,S^2).
\end{equation}
\end{theorem}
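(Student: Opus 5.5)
We will prove \eqref{eq:strictFavardS2} by duality. We turn $K(n,S^2)$ into a maximization over measures, show that the maximum is attained, and then show that equality with $H(n,S^2)$ would force the sign of the kernel error to be aligned at two different poles. This is impossible once $n\geq2$.

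\emph{Step 1 (dual formula).} By Helly's theorem (Hahn--Banach in $C(S^2)$), for continuous $f$
\[
 E_{n-1}(f)_{L^\infty(S^2)}=\sup\Bigl\{\Bigl|\int f\,d\nu\Bigr|:\ \nu\perp\mathcal H_{<n},\ \norm{\nu}\leq1\Bigr\},
\]
where $\nu$ ranges over complex Borel measures and $\norm{\nu}$ is total variation. Write $f=\bar f+G_2*g$ with $g=-\Delta f$; every such $\nu$ kills constants. Symmetry of $G_2$ and Fubini give $\int f\,d\nu=\int g\,U_\nu\,d\sigma_2$, where $U_\nu=G_2*\nu\in L^1$. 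Taking the supremum over $g$ with $\norm{g}_\infty\leq1$ yields
\[
 K(n,S^2)=\sup_\nu\norm{U_\nu}_{L^1(S^2)}.
\]
The restriction to the smooth class is harmless: the sign of $U_\nu$ can be smoothed by the heat semigroup exactly as in Theorem~\ref{thm:BohrAllDimensions}.

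\emph{Step 2 (attainment).} The unit ball of measures annihilating $\mathcal H_{<n}$ is weak$^*$ compact. I claim $\nu\mapsto\norm{U_\nu}_1$ is weak$^*$ continuous. Approximate $G_2$ in $L^1(dt/2)$ by continuous zonal kernels $G^\varepsilon$. Then $\norm{G_2*\nu-G^\varepsilon*\nu}_1\leq\norm{G_2-G^\varepsilon}_{L^1}$ uniformly for $\norm{\nu}\leq1$, while $\nu\mapsto G^\varepsilon*\nu$ is weak$^*$-to-uniform continuous by equicontinuity. Hence some admissible $\nu_*$ attains $K(n,S^2)$.

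\emph{Step 3 (equality analysis).} For admissible $\nu$, $U_\nu=(G_2-p_{n-1})*\nu$, because $p_{n-1}(x\cdot y)$ has degree below $n$ in $y$. Using~\eqref{eq:S2besterror},
\[
 \norm{U_\nu}_1\leq\int\!\!\int|G_2-p_{n-1}|(x\cdot y)\,d\sigma_2(x)\,d|\nu|(y)=H(n,S^2)\norm{\nu}.
\]
Suppose $\norm{U_{\nu_*}}_1=H(n,S^2)$. Then $\norm{\nu_*}=1$, and equality in the triangle inequality is needed. Write $d\nu_*=\sigma\,d|\nu_*|$ with $|\sigma|=1$, and recall $\sgn(G_2-p_{n-1})=s_n$. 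Equality forces, for $|\nu_*|$-almost every $y$,
\[
 \sigma(y)\,s_n(x\cdot y)=\overline{\sgn U_{\nu_*}(x)}\quad\text{for a.e. }x.
\]
Take two such points $y_1,y_2$. Then $s_n(x\cdot y_1)$ and $s_n(x\cdot y_2)$ agree up to a unimodular constant, so their discontinuity sets coincide. These sets are the $n\geq2$ latitude circles about $y_1$ and about $y_2$, at heights given by the zeros of $U_n$, and they coincide only if $y_2=\pm y_1$. Hence $\supp\nu_*\subset\{y,-y\}$ and $\nu_*=\alpha\delta_y+\beta\delta_{-y}$.

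Annihilating constants gives $\alpha+\beta=0$. Annihilating the degree-one harmonic $x\mapsto x\cdot y$, which is allowed precisely because $n\geq2$, gives $\alpha-\beta=0$. Thus $\nu_*=0$, contradicting $\norm{\nu_*}=1$. With Step~2 this gives $K(n,S^2)<H(n,S^2)$.

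The main obstacle I expect is making Step~3 rigorous. One must extract the pointwise alignment condition from equality in the Fubini/triangle estimate for complex measures, which should follow from $|\int h\,d|\nu||=\int|h|\,d|\nu|$ applied with the a.e.\ phase of $U_{\nu_*}$. One must also check that the nodal circles about two non-antipodal poles cannot coincide as sets. For the latter I would note that, for $n\geq2$, the set $\{x:\ x\cdot y_1=t_1\}$ for the largest zero $t_1$ of $U_n$ is a circle not through $\pm y_2$ unless $y_2=\pm y_1$, and that this circle is not a latitude circle about $y_2$ of the required height.
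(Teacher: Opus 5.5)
Your proposal is correct and follows the paper's proof essentially step by step: the dual upper bound through measures annihilating $\mathcal H_{<n}$, attainment by weak-$*$ compactness together with approximation of $G_2$ by continuous kernels, equality in the integrated triangle inequality forcing the rotated sign functions to agree up to a constant phase, the circle-intersection argument confining the maximizer to an antipodal pair, and annihilation of $1$ and $e\cdot x$ when $n\geq2$.

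One correction concerns your Step~1. The equality $K(n,S^2)=\sup_\nu\norm{U_\nu}_1$ is not justified, because $g=-\Delta f$ must have mean zero while $\sgn U_\nu$ generally does not; the exact dual is $\sup_\nu\inf_{c\in\C}\norm{U_\nu-c}_1$. However, your argument only uses the inequality $K(n,S^2)\leq\sup_\nu\norm{U_\nu}_1$, which is exactly what the paper uses, so the conclusion stands.
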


\begin{proof}
We use measures annihilating the approximating polynomials. Equality in the kernel estimate would require the errors with different poles to agree in phase; the annihilation conditions will make this impossible. Compactness then turns the exclusion of equality into a strict bound for the whole class.

Fix $n$, let $p_{n-1}$ be the interpolating polynomial for $G_2$, and put $R_n=G_2-p_{n-1}$. The kernel computations above and rotational invariance give $\int_{S^2}|R_n(x\cdot y)|\,d\sigma_2(y)=H(n,S^2)$ for every pole $x$. Moreover, $\sgn R_n(t)=\sgn U_n(t)$ almost everywhere, so the sign changes precisely at $t_j=\cos(j\pi/(n+1))$, for $j=1,\ldots,n$.

Let $\mathcal M_n$ be the set of finite complex Borel measures on $S^2$ with total variation at most one which annihilate $\mathcal H_{<n}$. For such a measure define
\[
 U_\mu(y)=\int_{S^2}R_n(x\cdot y)\,d\mu(x).
\]
Annihilation of the polynomial part means that this is also the Green potential of $\mu$. The Green representation and Fubini's theorem give $\int f\,d\mu=\int(-\Delta f)U_\mu\,d\sigma_2$ for every smooth $f$. The dual formula for best approximation, followed by the triangle inequality for the potential, therefore yields
\begin{equation}\label{eq:S2dualResidualBound}
 K(n,S^2)\leq\sup_{\mu\in\mathcal M_n}\norm{U_\mu}_1\leq H(n,S^2).
\end{equation}
We shall prove that the second inequality is strict.

First, the supremum over measures is attained. The set $\mathcal M_n$ is weak-* compact. Approximate $G_2$ in $L^1(dt/2)$ by continuous functions. For measures of total variation at most one, the error in the corresponding potentials is bounded in $L^1(S^2)$ by the same kernel error, uniformly in the measure. For a continuous kernel, weak-* convergence of a sequence of measures gives pointwise convergence of their potentials, and their uniform boundedness gives convergence in $L^1$. Approximation of $G_2$ thus shows that $\mu\mapsto U_\mu$ is weak-* to $L^1$ continuous on the measure unit ball. It follows that $\norm{U_\mu}_1$ attains its maximum on $\mathcal M_n$.

Suppose that the maximum equals $H(n,S^2)$. The maximizing measure then has total variation one. Write $d\mu=a\,d\nu$, where $\nu=|\mu|$ and $|a|=1$ almost everywhere with respect to $\nu$. Equality in the integrated triangle inequality forces all the contributions to have a common phase for almost every $y$. Thus there is a measurable function $b$, with $|b|=1$ almost everywhere, such that
\begin{equation}\label{eq:S2phaseAgreement}
 a(x)\,\sgn R_n(x\cdot y)=b(y)
\end{equation}
for almost every pair $(x,y)$ with respect to $\nu\otimes\sigma_2$. The residual is nonzero almost everywhere for each pole. By Fubini's theorem we may therefore fix a pole $e$ for which this identity holds for almost every $y$. For $\nu$-almost every other pole $x$, the two real sign functions $\sgn R_n(x\cdot y)$ and $\sgn R_n(e\cdot y)$ must agree or be opposite almost everywhere.

This is possible only when $x=e$ or $x=-e$. Indeed, the sign-change curves for a pole $x$ are the circles $x\cdot y=t_j$. If $x$ and $e$ are not parallel, each such circle meets each circle $e\cdot y=t_k$ in at most two points. We can therefore choose a point on a sign-change circle for $x$ which lies on none of those for $e$. In a small neighborhood of that point the sign associated with $e$ is constant, whereas the sign associated with $x$ changes across the circle. Both sides have positive surface measure, contradicting agreement, or opposite agreement, almost everywhere.

Consequently $\mu=A\delta_e+B\delta_{-e}$ for some complex numbers $A$ and $B$. Since $n\geq2$, the measure annihilates both the constant function and the linear function $x\mapsto e\cdot x$. These two conditions give $A+B=0$ and $A-B=0$, hence $\mu=0$. This contradicts its total variation being one. The maximum in~\eqref{eq:S2dualResidualBound} is therefore strictly smaller than $H(n,S^2)$, proving~\eqref{eq:strictFavardS2}.
\end{proof}

Thus the coincidence of the two constants in the classical periodic problem fails on $S^2$ for every $n\geq2$. The compactness argument gives a positive gap for each fixed $n$; it does not show that this gap remains positive uniformly in $n$ after multiplication by $n^2$.

\section{An algorithm for approximating smooth functions on \texorpdfstring{$S^2$}{S2}}\label{sec:constructiveS2}

The strict inequality just proved does not give a procedure for finding a best approximant to a prescribed function. Theorem~\ref{thm:optimalLinearS2}, however, provides an explicit method which is optimal among all linear methods. We describe its computation and apply it to a function arising from a geometric model used in geodesy.

Let $f$ be a smooth function on $S^2$, and choose a positive integer $n$. Recall that $G_2(t)=-1-\log((1-t)/2)$ and that its best $L^1(dt/2)$ approximant $p_{n-1}$ interpolates it at $t_j=\cos(j\pi/(n+1))$, for $j=1,\ldots,n$. Thus the polynomial needed for the method is obtained from $n$ values of a known function. For example, it can be written in Lagrange form as
\[
 p_{n-1}(t)=\sum_{j=1}^n G_2(t_j)\frac{U_n(t)}{(t-t_j)U_n'(t_j)}.
\]
The quotient in each summand is a polynomial, with its value at $t_j$ understood by continuity. Restoring the mean of $f$, we obtain the spherical polynomial
\begin{equation}\label{eq:constructiveGreenMethod}
 t_{n-1}(\omega)=A_nf(\omega)
 =\bar f+\int_{S^2}p_{n-1}(\omega\cdot\eta)(-\Delta f)(\eta)\,d\sigma_2(\eta).
\end{equation}
It belongs to $\mathcal H_{<n}$ and, by~\eqref{eq:H22fixed} and~\eqref{eq:H22}, satisfies
\begin{equation}\label{eq:constructiveGreenError}
 \norm{f-t_{n-1}}_\infty
 \leq H(n,S^2)\norm{\Delta f}_\infty
 \leq\frac{7\zeta(3)}{4n^2}\norm{\Delta f}_\infty.
\end{equation}
Optimality here concerns the maximal error over the whole class; the polynomial $A_nf$ need not be a best approximant to an individual function.

For numerical purposes we can avoid differentiating $f$. Self-adjointness of the spherical Laplacian transfers it to the polynomial in~\eqref{eq:constructiveGreenMethod}. Since the Laplacian of a zonal function $p(t)$ on $S^2$ is $(1-t^2)p''(t)-2tp'(t)$, define
\begin{equation}\label{eq:constructiveValueKernel}
 \mathcal L_n(t)=1+2tp_{n-1}'(t)-(1-t^2)p_{n-1}''(t).
\end{equation}
Then the same polynomial is given directly by the values of $f$:
\begin{equation}\label{eq:constructiveValues}
 t_{n-1}(\omega)=\int_{S^2}\mathcal L_n(\omega\cdot\eta)f(\eta)\,d\sigma_2(\eta).
\end{equation}
The constant term $1$ in~\eqref{eq:constructiveValueKernel} accounts for $\bar f$. In particular, this formula applies without a zero-mean assumption.

Here is a concrete quadrature procedure for computing this polynomial.
\begin{enumerate}
\item Interpolate $G_2$ at the nodes $t_j$ and compute $\mathcal L_n$ by~\eqref{eq:constructiveValueKernel}. This step depends only on $n$ and can be performed once for all functions to be approximated.
\item Choose an integer $q\geq n$. Let $u_i,w_i$, for $i=1,\ldots,q$, be the Gauss--Legendre nodes and weights on $[-1,1]$, with $\sum_iw_i=2$. Combine them with the $2q$ equally spaced longitudes $\varphi_j=j\pi/q$, for $j=0,\ldots,2q-1$, to form the spherical nodes
\[
 \eta_{ij}=\bigl(\sqrt{1-u_i^2}\cos\varphi_j,\,
 \sqrt{1-u_i^2}\sin\varphi_j,\,u_i\bigr).
\]
Evaluate $f$ at these $2q^2$ points.
\item Replace the integral in~\eqref{eq:constructiveValues} by the finite sum
\begin{equation}\label{eq:constructiveQuadrature}
 \widetilde t_{n-1,q}(\omega)
 =\frac1{4q}\sum_{i=1}^q\sum_{j=0}^{2q-1}
 w_i f(\eta_{ij})\mathcal L_n(\omega\cdot\eta_{ij}).
\end{equation}
This is already a spherical polynomial of degree at most $n-1$. To write it in Cartesian coordinates, expand $\mathcal L_n(\omega\cdot\eta_{ij})$ in powers of the coordinates of $\omega$ and collect the coefficients. Increase $q$ to check their numerical stability.
\end{enumerate}
The factor $1/(4q)$ corresponds to our normalized surface measure. Estimate~\eqref{eq:constructiveGreenError} bounds the error of the exact integral; for the computed polynomial one adds the quadrature error $\norm{A_nf-\widetilde t_{n-1,q}}_\infty$. Agreement under refinement is a numerical check, rather than a certified bound for this additional error.

As an example, consider the radial function of a triaxial ellipsoid. Such models are used in geodesy, and the expansion of their radial functions in spherical harmonics is discussed in~\cite[\S\,1.2]{HuShumBevis}. Choose the model semiaxes $1$, $0.9$, and $0.8$, and put
\begin{equation}\label{eq:ellipsoidExample}
 f(x,y,z)=\left(x^2+\frac{y^2}{0.9^2}+\frac{z^2}{0.8^2}\right)^{-1/2},
 \qquad x^2+y^2+z^2=1.
\end{equation}
For a unit direction $(x,y,z)$, this is the distance from the origin to the ellipsoid along that ray. Thus the function is defined on the unit sphere, while its values describe a different surface. The semiaxes are illustrative parameters chosen to make the directional variation visible. The function is smooth, depends on all three coordinates, and has no axis of rotational symmetry.

Its Laplacian can be computed directly. Write $D=\operatorname{diag}(1,100/81,25/16)$, $\omega=(x,y,z)$, $s=\omega^TD\omega$, and $v=\omega^TD^2\omega$. Then
\[
 \Delta_{S^2}f=\frac{3v-(\operatorname{tr}D)s}{s^{5/2}},
 \qquad M:=\norm{\Delta_{S^2}f}_\infty=\frac{1033}{1296}\approx0.797067901.
\]
To check the norm, the inequalities $s\leq v\leq(1+25/16)s-25/16$ reduce the extrema to a calculation on $1\leq s\leq25/16$. The most negative value is $-M$, attained at $(\pm1,0,0)$; the positive maximum is $0.455901235\ldots$, attained at $(0,0,\pm1)$.

Take $n=5$, so that the approximating polynomial has degree at most four. The exact operator preserves evenness in each coordinate. On the sphere, replacing the constant and quadratic terms by homogeneous quartic expressions using $x^2+y^2+z^2=1$ gives a representation with only six monomials. Computation by~\eqref{eq:constructiveQuadrature} yields, with the coefficients rounded,
\begin{equation}\label{eq:ellipsoidPolynomial}
 \begin{aligned}
 t_4(x,y,z)\approx{}&0.973987668\,x^4+0.897782497\,y^4\\
 &+0.815228281\,z^4+1.866629219\,x^2y^2\\
 &+1.765700170\,x^2z^2+1.706320701\,y^2z^2.
 \end{aligned}
\end{equation}
This is a spherical polynomial containing harmonic degrees $0$, $2$, and $4$. It need not be a homogeneous harmonic polynomial of degree four.

Table~\ref{tab:ellipsoidApproximation} compares the computed errors for several small values of $n$ with the two bounds in~\eqref{eq:constructiveGreenError}. We used $q=64$ and checked the coefficients against $q=32$. For $n=5$, the six coefficients changed by less than $2\cdot10^{-14}$. The errors were sampled on the triangular mesh $x^2=i/2000$, $y^2=(1-x^2)j/2000$, $z^2=1-x^2-y^2$, for $i,j=0,\ldots,2000$. Evenness permits this reduction to one octant. The sampled errors are numerical values, not certified uniform maxima; the last two columns give numerical evaluations of the theoretical bounds for the exact operator.

\begin{table}[htbp]
\caption{Approximation of the radial function~\eqref{eq:ellipsoidExample}.}\label{tab:ellipsoidApproximation}
\centering
\begin{tabular}{crrr}
\hline
$n$ & Sampled error & $M H(n,S^2)$ & $7\zeta(3)M/(4n^2)$\\
\hline
2 & 0.1066903 & 0.2085321 & 0.4191779\\
3 & 0.0528494 & 0.1114651 & 0.1863013\\
4 & 0.0398255 & 0.0697369 & 0.1047945\\
5 & 0.0260123 & 0.0478461 & 0.0670685\\
\hline
\end{tabular}
\end{table}

For $n=5$, the largest sampled error occurs at $(\pm1,0,0)$ and is approximately $0.0260123$, whereas~\eqref{eq:constructiveGreenError} guarantees an error below $0.04785$ for the exact polynomial. The construction therefore gives an explicit polynomial with a uniform error bound, without solving a separate best-approximation problem for this function.

\section{An explicit Favard gap on \texorpdfstring{$S^2$}{S2}}\label{sec:explicitFavardGapS2}

We conclude the discussion of $S^2$ by making the strict inequality quantitative. The idea is to distinguish two classes of functions. If the Green interpolation method already gives a smaller error, we use it. Otherwise its error forces the Laplacian to be close to a rotated Bohr extremal source, whose potential admits a better approximant.

\begin{proposition}\label{prop:explicitFavardGapS2}
For every integer $n\geq32$,
\begin{equation}\label{eq:explicitFavardGapS2}
 K(n,S^2)\leq H(n,S^2)-\frac{0.000000238}{n^7}.
\end{equation}
For the concrete case $n=3$, we have
\begin{equation}\label{eq:explicitFavardGapThree}
 K(3,S^2)\leq H(3,S^2)-0.00000000144.
\end{equation}
Both estimates apply to complex-valued functions. The decimal constants are chosen below the bounds obtained in the proof, so the displayed inequalities do not depend on numerical rounding.
\end{proposition}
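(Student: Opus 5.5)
The plan is to make the compactness argument of Theorem~\ref{thm:strictFavardS2} quantitative by a two-case dichotomy, as announced at the start of this section. Fix $f$ with $\norm{\Delta f}_\infty\leq1$ and mean zero, put $g=-\Delta f$, and let $R_n=G_2-p_{n-1}$ be the interpolation residual, so that $f-A_*f=\int R_n(x\cdot y)g(y)\,d\sigma_2(y)$ with $A_*$ the method of Theorem~\ref{thm:optimalLinearS2}. Fix a threshold $\delta>0$, to be optimized at the end. \emph{Case 1:} $\norm{f-A_*f}_\infty\leq H(n,S^2)-\delta$; then we are done. \emph{Case 2:} at some pole $e$ (after a rotation and a unimodular phase) $\operatorname{Re}\int R_n(e\cdot y)g(y)\,d\sigma_2\geq H(n,S^2)-\delta$. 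Since $\int|R_n(e\cdot y)|\,d\sigma_2=H(n,S^2)$ and $|g|\leq1$, this forces the weighted defect
\[
 \int_{S^2}|R_n(e\cdot y)|\bigl(1-\operatorname{Re}g(y)\,s_n(e\cdot y)\bigr)\,d\sigma_2(y)\leq\delta,
\]
so $g$ is close to the rotated Bohr source $s_n(e\cdot y)$ wherever $|R_n|$ is not small. Using the positive integral representation~\eqref{eq:positiveTrigRemainder}, I will bound $|R_n(t)|$ from below by an explicit multiple of $|U_n(t)|\sqrt{1-t^2}$ away from a thin band around the nodes $t_j$ and near $t=-1$, which turns the weighted defect into an $L^1$ bound $\norm{g-s_n(e\cdot)}_{L^1(\text{good set})}\lesssim\delta\, n^{a}$ for an explicit power $a$.

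In Case 2 I then exhibit a better approximant than $A_*f$. The natural choice is to add to $A_*f$ a correction $c\,\tau$, where $\tau\in\mathcal H_{<n}$ is the best available approximant of the Bohr extremal potential $f_{n}$ of Theorem~\ref{thm:sphere2} relative to its residual, i.e.\ exploit that $f-A_*f$ is close to $U(y):=\int R_n(x\cdot y)s_n(e\cdot y)\,d\sigma_2$, whose sup $H(n,S^2)$ is attained only at $x=\pm e$ (the same geometric fact as in Theorem~\ref{thm:strictFavardS2}). Concretely, the residual $r=f-A_*f$ equals $U$ plus an error controlled by the defect, and $U$ is a zonal function peaked at $e$; subtracting a small multiple $c\,P_1(e\cdot x)$ (or a low-degree zonal bump, allowed since $n\geq2$) lowers the peak at $e$ by about $c\cdot(\text{const})$ while raising $|U|$ elsewhere by at most $c$, and elsewhere $|U|$ is bounded by $H(n,S^2)-\kappa_n$ outside a cap of radius $\sim1/n$ around $\pm e$. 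I will quantify the flatness of $U$ near its maximum via $\Delta U=-s_n$ (a second-order Taylor bound, giving a gap $\kappa_n\gtrsim \rho^2$ at distance $\rho$) to choose $c$ and the cap size.

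Finally balance: Case 1 costs $\delta$; Case 2 yields $H(n,S^2)-\gamma(c,\rho)+C n^{a}\delta$. Choosing $c,\rho$ as powers of $1/n$ and then $\delta$ to equalize produces a gain of order $n^{-7}$ for $n\geq32$ (the restriction $n\geq32$ being where the explicit asymptotic lower bounds on $|R_n|$ and on $n^2H(n,S^2)$ from~\eqref{eq:H22fixed} become clean), and the constant $2.38\cdot10^{-7}$ is obtained by tracking constants. For $n=3$ the same scheme is run with the explicit nodes $\cos(j\pi/4)$ and closed forms for $R_3$, with numerically verified but rigorously bounded constants. Complex-valued $f$ are handled by the phase normalization in Case 2 and by the real-part reduction already used before.

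The main obstacle is Case 2's stability step: converting near-equality in the kernel bound into a quantitative statement that the residual $f-A_*f$ is uniformly close to $U$ near its maximum, not merely in $L^1$, since the improvement must be uniform in sup norm. I would first try to bound $|r-U|$ pointwise via $\int|R_n(x\cdot y)|\,|g-s_n(e\cdot y)|\,d\sigma_2$ using Cauchy–Schwarz with $R_n\in L^2$ and $|g-s_n|^2\leq2(1-\operatorname{Re}g s_n)$, and the $L^2$ norm of $R_n$ from~\eqref{eq:trigGreenFourier}, accepting the loss $\sqrt\delta$, which is what drives the high power $n^{-7}$.
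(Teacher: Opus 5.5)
\textbf{Where your proposal agrees with the paper.} Your transfer mechanism is essentially the paper's two-class argument: the dichotomy on $\norm{R_n*g}_\infty$, the phase $\lambda$ at a maximizing pole, the weighted defect $\int|R_n(e\cdot y)|\,|g-\lambda s_{n,e}|^2\,d\sigma_2<2\eta$, a sublevel-set bound for $|R_n|$, and Cauchy--Schwarz in $L^2$. The paper's sublevel-set bound is $|R_n|\geq h|U_n|/(n+1)$, from $J_n(-1)=h/(n+1)$.

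Two corrections there. First, the splitting level must shrink with $\eta$ and be optimized. This gives $\norm{d}_2\leq2(2(n+1)\eta/h)^{1/4}$, and it is this fourth root, not a $\sqrt\delta$ loss, that produces $n^{-7}$. Second, the paper lets the approximant absorb the low-degree part of $G_2*d$. The remainder is then $G_{\geq n}*d$ with $\norm{G_{\geq n}}_2=1/n$ exactly. Your $\norm{R_n}_2$ is strictly larger, and that loss enters $\eta$ to the fourth power.

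\textbf{The genuine gap.} It is the gain for the Bohr extremal itself: you need an explicit $v\in\mathcal H_{<n}$ with $\norm{\Phi_{n,e}-v}_\infty\leq h-b$ and $b\asymp n^{-2}$.

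Your perturbation by $c\,P_1(e\cdot x)$ requires $|\Phi_n(t)|\leq h-\kappa_n$ outside caps at $\pm e$. The Taylor bound from $\Delta U=-s_n$ only describes the first sector $\theta<a=\pi/(n+1)$, where $\psi(\theta)=\Phi_n(\cos\theta)=h+2\log\cos(\theta/2)$. It says nothing about the heights of the remaining lobes of $\psi$, and those heights are exactly what must be controlled. A sector-by-sector analysis might supply $\kappa_n$, but that is a separate argument you have not given.

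There is also a parity problem: $\Phi_{n,e}(-e)=(-1)^nh$. For even $n$, subtracting $c\,e\cdot x$ raises the value at $-e$ to $h+c$, so an even correction is needed.

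\textbf{How the paper obtains the gain.} It avoids any lobe analysis. On each sector the ODE gives $\psi'=(-1)^j(\cos\theta-c_j)/\sin\theta$. Hence $|\psi'|\leq\tau:=\tan(a/2)$ and $|\psi''|\leq(3-\tau^2)/2$. The classical periodic Favard estimate, applied to the even periodic extension of $\psi$, then gives $E_{n-1}(\Phi_{n,e})<3\pi^2/(16n^2)$.

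The summands of $n^2H(n,S^2)$ increase with $n$, and a rational comparison at $n=32$ gives $n^2h>3\pi^2/16+1/8$ for $n\geq32$. This is the origin of the threshold. It yields $b=0.125/n^2$, and then $\eta=\min\{b/2,\,hn^4b^4/(512(n+1))\}\geq0.000000238/n^7$.

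For $n=3$ your choice coincides with the paper's $0.03125\,e\cdot x$. The substance of the proof there is the global check of $F(t)-0.03125\,t$ using the closed form of $F$: the minimum lies at $r<0.45$, and one needs $h>296/2145$.

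As written, your exponent $n^{-7}$, the constant $2.38\cdot10^{-7}$, and the threshold $32$ are asserted rather than derived.
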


We first give a quantitative form of the two-class argument. Fix $n$, put $h=H(n,S^2)$, and let $p_{n-1}$ be the best approximant to $G_2$ already constructed. Write $R_n=G_2-p_{n-1}$ and, for a pole $e$, put
\[
 s_{n,e}(y)=\sgn U_n(e\cdot y),\qquad
 \Phi_{n,e}=G_2*s_{n,e}.
\]
Suppose that the best approximation error of this Bohr extremal is at most $h-b$, with $b>0$. Rotation invariance makes this condition independent of $e$. We claim that
\begin{equation}\label{eq:quantitativeGapTransfer}
 K(n,S^2)\leq h-\eta,\qquad
 \eta=\min\left\{\frac b2,\frac{h n^4b^4}{512(n+1)}\right\}.
\end{equation}

To prove the claim, we need to control the region where $R_n$ is small. Interpolating the integral representation of $-\log(1-t)$ gives
\[
 R_n(t)=U_n(t)J_n(t),\qquad
 J_n(t)=\int_1^\infty\frac{da}{(a-t)U_n(a)}.
\]
The factor $J_n$ is positive and increasing. Substitution $a=\cosh v$, followed by expansion of $1/\sinh((n+1)v)$, gives $J_n(-1)=h/(n+1)$ by~\eqref{eq:H22fixed}. Hence $|R_n(t)|\geq h|U_n(t)|/(n+1)$, and
\begin{equation}\label{eq:gapSublevelBound}
 \sigma_2\{y:|R_n(e\cdot y)|\leq u\}\leq\frac{(n+1)u}{h}.
\end{equation}
Here is a direct verification of the last step. Put $m=n+1$ and $v=mu/h$. Only $v<1$ needs consideration. Since $U_n(\cos\theta)=\sin(m\theta)/\sin\theta$, the set in question is contained in $|\sin(m\theta)|\leq v$. With $\alpha=\arcsin v$, its containing set has normalized measure
\[
 1-\cos(\alpha/m)+\cot\frac\pi{2m}\sin(\alpha/m).
\]
As a function of $\alpha$ on $[0,\pi/2]$, this expression is convex and has endpoint values zero and one. It is therefore at most $2\alpha/\pi\leq\sin\alpha=v$, proving~\eqref{eq:gapSublevelBound}.

Now let $g=-\Delta f$, where $\norm{g}_\infty\leq1$, and subtract the mean of $f$. If $\norm{R_n*g}_\infty\leq h-\eta$, the polynomial $p_{n-1}*g$ suffices. Otherwise choose a pole $e$ where the residual has maximal modulus and a unit complex number $\lambda$ with the same phase. For $d=g-\lambda s_{n,e}$, the elementary inequality $|z-1|^2\leq2(1-\operatorname{Re}z)$ on the unit disk gives
\[
 \int_{S^2}|R_n(e\cdot y)|\,|d(y)|^2\,d\sigma_2(y)<2\eta.
\]
Split the integral of $|d|^2$ at $|R_n|=u$, use $|d|\leq2$ on the first part, and apply~\eqref{eq:gapSublevelBound}. Optimizing in $u$ yields
\[
 \norm{d}_2\leq2\left(\frac{2(n+1)\eta}{h}\right)^{1/4}.
\]
The part $G_{\geq n}$ of the Green kernel containing degrees at least $n$ has squared integral norm
\[
 \norm{G_{\geq n}}_2^2
 =\sum_{\ell\geq n}\frac{2\ell+1}{\ell^2(\ell+1)^2}
 =\frac1{n^2}.
\]
Subtracting the lower-degree part of $G_2*d$ and applying Cauchy--Schwarz therefore gives
\[
 E_{n-1}(f)_{L^\infty(S^2)}
 \leq h-b+\frac2n\left(\frac{2(n+1)\eta}{h}\right)^{1/4}.
\]
The choice in~\eqref{eq:quantitativeGapTransfer} makes the last term at most $b/2$ and also ensures $\eta\leq b/2$. This proves the claim for the whole class.

We next estimate the approximation gain $b$. Write $\Phi_{n,e}(x)=\Phi_n(e\cdot x)$ and $\psi(\theta)=\Phi_n(\cos\theta)$. Put $a=\pi/(n+1)$ and $\tau=\tan(a/2)$. The zonal equation gives, on $ja<\theta<(j+1)a$,
\[
 \psi'(\theta)=(-1)^j\frac{\cos\theta-c_j}{\sin\theta},\qquad
 c_j=\frac{\cos((j+1/2)a)}{\cos(a/2)}.
\]
This derivative is monotone on each interval, and its values at the interior nodes alternate between $\tau$ and $-\tau$. Thus $|\psi'|\leq\tau$. On $[a,\pi-a]$, the equation $-\psi''-\cot\theta\,\psi'=\sgn\sin((n+1)\theta)$ bounds $|\psi''|$ by $1+\tau\cot a=(3-\tau^2)/2$. On the two end intervals the explicit formula bounds it by $(1+\tau^2)/2$, which is no larger.

The even periodic extension of $\psi$ has an absolutely continuous first derivative. Applying the classical Favard estimate~\eqref{eq:FavardApproximation} for the second derivative, and taking the even part of the approximating polynomial, gives
\begin{equation}\label{eq:BohrPrototypeApproximation}
 E_{n-1}(\Phi_{n,e})_{L^\infty(S^2)}
 \leq\frac{\pi^2}{16n^2}(3-\tau^2)
 <\frac{3\pi^2}{16n^2}.
\end{equation}
A cosine polynomial of degree at most $n-1$ is a polynomial in $e\cdot x$ of the same degree, so this is a spherical approximation estimate.

Each summand of $n^2H(n,S^2)$ in~\eqref{eq:H22fixed} increases with $n$. At $n=32$, its first eight summands already have sum greater than $3(22/7)^2/16+1/8$. This is a rational comparison; together with $\pi<22/7$, it shows that we may take $b=0.125/n^2$ for every $n\geq32$. Substitution in~\eqref{eq:quantitativeGapTransfer}, using $h\geq n^{-2}$ and $n+1\leq2n$, gives $\eta\geq0.000000238/n^7$. This proves~\eqref{eq:explicitFavardGapS2}.

For $n=3$, a concrete polynomial gives a better starting point. Put $a=1/\sqrt2$ and $c=\sqrt2-1$. The Bohr extremal is the odd zonal function $\Phi_{3,e}(x)=F(e\cdot x)$, where, on the positive half-interval,
\[
 F(t)=
 \begin{cases}
 -\tfrac12\log(1-t^2)-c\operatorname{arctanh}t,&0\leq t\leq a,\\
 F(a)+\log\dfrac{1+t}{1+a},&a\leq t\leq1.
 \end{cases}
\]
Its value at one is $h=H(3,S^2)$. The spherical polynomial $0.03125\,e\cdot x$ satisfies
\begin{equation}\label{eq:gapThreePrototype}
 \norm{\Phi_{3,e}-0.03125\,e\cdot x}_\infty=h-0.03125.
\end{equation}
For completeness, on $[0,a]$ we have $F'(t)=(t-c)/(1-t^2)$ and $F''(t)>0$; on $[a,1]$ we have $F'(t)=1/(1+t)>1/2$. Also
\[
 -F(c)=\int_0^c\frac{c-u}{1-u^2}\,du
 \leq\frac{c^2}{2}+\frac{c^3}{24}<0.09.
\]
The middle bound follows from $1-c^2=2c$ and $1/(1-u^2)\leq1+u^2/(1-c^2)$; the last follows from $c<5/12$. The minimum of $F(t)-0.03125t$ occurs at a point $r$ satisfying $r+0.03125r^2=c+0.03125$, so $r<0.45$. Its negative magnitude is therefore less than $0.1040625$. By the first two summands in~\eqref{eq:H22fixed}, $h>296/2145$, and $h-0.03125>0.1040625$. The positive maximum occurs at one, where its value is $h-0.03125$. Oddness proves~\eqref{eq:gapThreePrototype}.

We may consequently use $b=0.03125$ in~\eqref{eq:quantitativeGapTransfer}. With $n=3$ and $h>296/2145$, both entries in that minimum exceed $0.00000000144$. This proves~\eqref{eq:explicitFavardGapThree}. The concrete polynomial was used for the model extremal, while the two-class argument transfers its gain to every admissible function, including complex-valued ones.

Unfortunately, we have not been able to determine the exact values of $K(n,S^2)$ for $n\geq2$, or to establish a positive gap that survives multiplication by $n^2$ and passage to the limit. The guaranteed gain in~\eqref{eq:explicitFavardGapS2} then tends to zero. In particular, the question whether $\limsup_{n\to\infty}n^2K(n,S^2)<7\zeta(3)/4$ remains open in this work.

\section{The Favard constant on \texorpdfstring{$S^3$}{S3}}\label{sec:favardconstants}

We now give the exact dual formulation in both dimensions and extend Theorem~\ref{thm:strictFavardS2} to $S^3$. The function $f$ need not have missing low harmonics: these are available to the approximating polynomial. As explained in Section~\ref{sec:greenrepresentation}, we may still assume that $f$ has mean zero.

There is an exact dual formulation of this problem in terms of measures. Fix $d=2$ or $d=3$ and a positive integer $n$. Let $\mathcal M_n$ be the set of finite complex Borel measures on $S^d$ with total variation at most one which annihilate $\mathcal H_{<n}$. For each such measure, define its Green potential by
\[
 U_\mu(y)=\int_{S^d}G_d(x\cdot y)\,d\mu(x).
\]
The integrability of $G_d$ and rotational invariance imply that $U_\mu$ belongs to $L^1(S^d)$. The Favard constant is given exactly by
\begin{equation}\label{eq:exactFavardDuality}
 K(n,S^d)=\sup_{\mu\in\mathcal M_n}\inf_{c\in\C}
 \norm{U_\mu-c}_{L^1(S^d)}.
\end{equation}
Thus an upper bound for $K(n,S^d)$ is equivalent to a uniform bound for the best $L^1$ approximation of these potentials by constants. The approximating constant may depend on the measure.

The subtraction of a constant reflects the fact that $g=-\Delta f$ has mean zero. Indeed, the Green representation gives
\[
 \int_{S^d}f(x)\,d\mu(x)=\int_{S^d}g(y)U_\mu(y)\,d\sigma_d(y).
\]
Conversely, every smooth function $g$ of mean zero is the negative Laplacian of a smooth function $f$. The usual dual formula for best uniform approximation therefore reduces $K(n,S^d)$ to the supremum of the absolute value of this pairing over $\mu\in\mathcal M_n$ and mean-zero $g$ bounded by one. For each fixed $\mu$, the latter supremum is the norm of $U_\mu$ in the quotient $L^1(S^d)/\C$, namely its distance to the constants. Heat regularization preserves zero mean and the $L^\infty$ bound, and converges in the weak-* topology, so restricting this supremum to smooth $g$ does not change its value. This proves~\eqref{eq:exactFavardDuality}.

Choosing $c=0$ in~\eqref{eq:exactFavardDuality} gives a sufficient upper bound in terms of $\norm{U_\mu}_1$. Since $\mu$ annihilates $\mathcal H_{<n}$, subtracting a polynomial of degree at most $n-1$ from $G_d$ leaves $U_\mu$ unchanged. The best kernel approximants found above consequently yield
\begin{equation}\label{eq:FavardBohrComparison}
 K(n,S^d)\leq H(n,S^d),\qquad d=2,3.
\end{equation}
This holds for every positive integer $n$. On $S^3$, interpolation of the Green kernel also gives an optimal linear method for each $n$. Indeed, the averaging argument of Theorem~\ref{thm:optimalLinearS2}, now over $SO(4)$, gives an operator commuting with the spherical Laplacian. Its distinct eigenvalues $-\ell(\ell+2)$ force that operator to vanish on harmonics of degree at least $n$, so its maximal error is at least $H(n,S^3)$. Convolution of $-\Delta f$ with the interpolating polynomial, with the mean of $f$ restored, attains this bound by Theorem~\ref{thm:S3fixedn}.

Nevertheless, best approximation again gives a strictly smaller constant. The obstruction to agreement of phases used on $S^2$ applies here as well.

\begin{theorem}\label{thm:strictFavard}
For each integer $n\geq2$,
\begin{equation}\label{eq:strictFavard}
 K(n,S^3)<H(n,S^3).
\end{equation}
\end{theorem}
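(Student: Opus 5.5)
We follow the proof of Theorem~\ref{thm:strictFavardS2} closely and replace the $S^2$ ingredients by their $S^3$ counterparts from Theorem~\ref{thm:S3fixedn}. Fix $n\geq2$, let $p_{n-1}$ interpolate $G_3$ at the zeros of $q_n$, and put $R_n=G_3-p_{n-1}$. By~\eqref{eq:error-sign}, $\sgn R_n=\sgn q_n$ almost everywhere, and by~\eqref{eq:remainder3} the remainder vanishes only at the $n$ simple zeros $t_1,\ldots,t_n\in(-1,1)$. By rotational invariance, $\int_{S^3}|R_n(x\cdot y)|\,d\sigma_3(y)=H(n,S^3)$ for every pole $x$.

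For $\mu\in\mathcal M_n$, annihilation of $\mathcal H_{<n}$ gives $U_\mu(y)=\int R_n(x\cdot y)\,d\mu(x)$. Taking $c=0$ in~\eqref{eq:exactFavardDuality} and applying the triangle inequality yields
\[
 K(n,S^3)\leq\sup_{\mu\in\mathcal M_n}\norm{U_\mu}_1\leq H(n,S^3).
\]
It therefore suffices to show that the middle supremum is strictly smaller. Attainment follows as on $S^2$. The set $\mathcal M_n$ is weak-* compact. We approximate $G_3$ in $L^1(\mu_3)$ by continuous zonal kernels, with an error bound uniform over measures of total variation at most one. This shows that $\mu\mapsto U_\mu$ is weak-* to $L^1$ continuous on the unit ball, since $G_3$ is integrable despite its $1/\theta$ singularity.

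Assume a maximizer attains $H(n,S^3)$. Then $\norm{\mu}=1$, and equality in the integrated triangle inequality gives the phase identity $a(x)\sgn R_n(x\cdot y)=b(y)$ for $|\mu|\otimes\sigma_3$-almost every pair, exactly as in~\eqref{eq:S2phaseAgreement}. Fixing a good pole $e$, for $|\mu|$-almost every $x$ the sign functions $\sgn q_n(x\cdot y)$ and $\sgn q_n(e\cdot y)$ on $S^3$ must agree or be opposite almost everywhere. The geometric step is the main point that needs care in dimension three. The sign-change sets are now the $2$-spheres $\{x\cdot y=t_j\}$ and $\{e\cdot y=t_k\}$. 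If $x\neq\pm e$, two such spheres are distinct and not tangent everywhere, so their intersection is a set of dimension at most one (a circle or a point) inside the $2$-sphere $\{x\cdot y=t_j\}$. Hence some point of $\{x\cdot y=t_j\}$ avoids all the finitely many sets $\{e\cdot y=t_k\}$. Near that point, $\sgn q_n(e\cdot y)$ is constant, while $\sgn q_n(x\cdot y)$ changes sign because the zero $t_j$ is simple. Both sides then carry positive measure, which gives a contradiction. Consequently $|\mu|$ is supported on $\{e,-e\}$, so $\mu=A\delta_e+B\delta_{-e}$.

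Finally, since $n\geq2$, the measure $\mu$ annihilates the constant $1$ and the linear function $x\mapsto e\cdot x$. These conditions give $A+B=0$ and $A-B=0$, so $\mu=0$, contradicting $\norm{\mu}=1$. The maximum is therefore strictly less than $H(n,S^3)$, which proves~\eqref{eq:strictFavard}. We expect the main obstacle to be the compactness/continuity step rather than the geometry. The Fubini selection of a single pole $e$ where the phase identity holds requires that $R_n(e\cdot y)\neq0$ almost everywhere, and this follows from the simplicity of the zeros in~\eqref{eq:remainder3}.
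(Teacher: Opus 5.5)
Your proposal is correct and follows essentially the same route as the paper. Both arguments take $c=0$ in the duality formula and obtain attainment of the supremum by approximating $G_3$ with continuous kernels. Both then derive phase agreement from equality in the triangle inequality and use the fact that two nonparallel latitude $2$-spheres in $S^3$ meet in at most a circle. Finally, both conclude with the antipodal-pair contradiction obtained from annihilating $1$ and $e\cdot x$.
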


\begin{proof}
Let $p_{n-1,3}$ be the interpolating polynomial constructed for $G_3$, and put $R_{n,3}=G_3-p_{n-1,3}$. The kernel computation gives
\begin{equation}\label{eq:residualNorm}
 \int_{S^3}|R_{n,3}(x\cdot y)|\,d\sigma_3(y)=H(n,S^3)
\end{equation}
for every pole $x$, and $\sgn R_{n,3}=\sgn q_n$ almost everywhere. The zeros of $q_n$ are simple and lie in $(-1,1)$. Annihilation of the polynomial part and~\eqref{eq:exactFavardDuality} give
\begin{equation}\label{eq:dualResidualBound}
 K(n,S^3)\leq\sup_{\mu\in\mathcal M_n}\norm{U_\mu}_1\leq H(n,S^3).
\end{equation}
Since $G_3$ is integrable, the continuous-kernel approximation used in the proof of Theorem~\ref{thm:strictFavardS2} again shows that the supremum over measures is attained.

Equality would force the rotated sign functions $\sgn R_{n,3}(x\cdot y)$ to agree up to a constant phase for almost every pole with respect to $|\mu|$. Their sign-change sets are now two-dimensional latitude spheres. For nonparallel poles, the intersection of two such latitude spheres has dimension at most one, so finitely many intersections cannot cover a latitude sphere. The same neighborhood argument as on $S^2$ therefore forces $\mu$ to be supported on one antipodal pair. Annihilation of the constant and the linear function along that axis makes both masses zero, contradicting total variation one. Thus the second inequality in~\eqref{eq:dualResidualBound} is strict, proving~\eqref{eq:strictFavard}.
\end{proof}

On both spheres the compactness argument gives a positive gap for each fixed $n\geq2$. Section~\ref{sec:explicitFavardGapS2} makes this gap explicit on $S^2$, but the exact values of $K(n,S^d)$ for $n\geq2$ remain undetermined. Neither argument establishes a gap uniform in $n$ after multiplication by $n^2$; in particular, it remains open whether $K(S^d)<H(S^d)$. The distinction between approximation of the kernel and approximation of functions is therefore already strict at each $n\geq2$, while the question about the best uniform coefficients remains.

\section*{Acknowledgments}

The author is grateful to Yuriy Kryakin for drawing his attention to these questions and for discussions of the results of this work.

In preparing this paper, the author freely used publicly available sources and results published in the open literature, computational software, including Python, and various large language models.

\end{document}